\newcommand{\floor}[1]{\lfloor#1\rfloor}
\DeclareMathOperator{\diam}{diam}
\DeclareMathOperator{\bP}{\mathbb{P}}
\DeclareMathOperator{\bE}{\mathbb{E}}
\newcommand{\eps}{\varepsilon}
\newcommand{\LL}{{\mathrm{LL}}}
\newcommand{\Lip}{{\mathrm{Lip}}}
\newcommand\bN{{\mathbb N}}
\newcommand\bR{{\mathbb R}}
\newcommand\bZ{{\mathbb Z}}
\newcommand{\hr}{{\hat{r}}}
\newcommand{\hh}{{\hat{h}}}
\newcommand{\htau}{{\hat{\tau}}}
\newcommand\cB{{\mathcal B}}
\newcommand\cC{{\mathcal C}}
\newcommand\cF{{\mathcal F}}
\newcommand\cT{{\mathcal T}}
\newcommand\cP{{\mathcal P}}
\newcommand{\tH}{{\widetilde H}}
\newcommand{\tX}{{\widetilde X}}
\newcommand{\tmu}{{\tilde{\mu}}}
\newcommand{\tomega}{{\tilde{\omega}}}
\numberwithin{equation}{section}
\newtheorem{thm}{Theorem}[section]
\newtheorem{prop}[thm]{Proposition}
\newtheorem{cor}[thm]{Corollary}
\newtheorem{lem}[thm]{Lemma}
\newtheorem{defn}[thm]{Definition}
\theoremstyle{remark}
\newtheorem{rmk}[thm]{Remark}
\begin{document}

\title[Loss of memory and moment bounds]{Loss of memory and
moment bounds for nonstationary intermittent dynamical systems}

\author{A.~Korepanov$^1$}
\address{$^1$College of Engineering, Mathematics and Physical Sciences,
University of Exeter, Exeter, EX4 4QF, UK}
\email{a.korepanov@exeter.ac.uk}

\author{J.~Lepp\"anen$^2$}
\address{$^2$Laboratoire de Probabilit\'es, Statistique et Mod\'elisation (LPSM),
CNRS, Sorbonne Universit\'e, Universit\'e de Paris, 4 Place Jussieu, 75005 Paris, France}
\email{leppanen@lpsm.paris}

\maketitle

\begin{abstract}
    We study nonstationary intermittent dynamical systems, such as compositions
    of a (deterministic) sequence of Pomeau-Manneville maps.
    We prove two main results: sharp bounds on memory loss, including the ``unexpected''
    faster rate for a large class of measures, and sharp moment bounds for
    Birkhoff sums and, more generally, ``separately H\"older'' observables.
\end{abstract}

\section{Introduction}

Suppose that $X$ is a measurable space and $T_n \colon X \to X$, $n \geq 1$,
is a sequence of transformations; let $T_{1,n} = T_n \circ \cdots \circ T_1$.
Let $v_n \colon X \to \bR$, $n \geq 0$, be a
sequence of observables. Consider a process such as the Birkhoff sum
\[
    S_n = v_0 + v_1 \circ T_{1,1} + \cdots + v_{n-1} \circ T_{1,n-1}
\]
or the record process
\[
    M_n = \max \{v_0, v_1 \circ T_{1,1}, \ldots, v_{n-1} \circ T_{1,n-1}\}
    .
\]
Such processes are the central objects of interest in nonstationary dynamical systems.
Often the initial state is random (we are given a probability measure on $X$),
then we think of $S_n$ and $M_n$ as random processes.

Statistical properties of the above processes have been a topic of very intense recent investigations.
Under various assumptions, numerous authors published results on:
\begin{itemize}
    \item rates of memory loss (or decay of correlations)~\cite{BB16,BBD14,SVZ20,SYZ13},
    \item ergodic theorems, central limit theorems, local limit theorems
        and stable laws~\cite{ANV15,BB16,CR07,DFGTV18.2,DFGTV20,LS16,NPT19,NTV18},
    \item almost sure invariance principles~\cite{ANV15,DFGTV18.1,H19,HNTV17,SVZ20,S19},
    \item large deviations and concentration inequalities~\cite{ANV15,AR16,DFGTV18.2,DFGTV20,NPT19,S19},
    \item exponential law for hitting times~\cite{HRY17} and extreme value laws~\cite{FFV18}.
\end{itemize}
This list is not exhaustive.

In this paper we are interested in nonstationary dynamical systems with intermittency, as in
the Pomeau-Manneville~\cite{PM80} scenario. These are chaotic (turbulent) systems with a regular (laminar)
region, in which a trajectory can be trapped for a very long time.
Under natural assumptions we prove optimal asymptotic bounds for:
\begin{itemize}
    \item Memory loss: $\bigl|(T_{1,n})_* \mu - (T_{1,n})_* \nu\bigr|$, where $\mu$ and $\nu$ are
        probability measures, $|\cdot|$ denotes the total variation
        and $(\cdot)_*$ is the pushforward.
    \item Moment bounds: $\bE|S_n - \bE S_n|^p$ for $p > 0$.
\end{itemize}
In the abstract setting our results are presented much later, in Theorems~\ref{thm:decdec} and~\ref{thm:mom}.
Since the abstract setting is not suitable for an introduction, here we present specific applications
to the most standard and popular example: the Liverani--Saussol--Vaienti~\cite{LSV99} maps
$T \colon [0,1] \to [0,1]$,
\begin{equation}
    \label{eq:LSV}
    T(x) = \begin{cases}
        x(1 + 2^\gamma x^\gamma), & x \leq 1/2,
        \\
        2x - 1,                   & x > 1/2.
    \end{cases}
\end{equation}
Here $\gamma \in (0,1)$ is a parameter. These maps often serve
as a prototypical example of slowly (polynomially) mixing systems.
We recommend Gou\"ezel~\cite{G04c} for some background information
on their statistical behavior.

Theorems~\ref{thm:main:memory} (memory loss) and~\ref{thm:main:moments} (moment bounds)
are applications of Theorems~\ref{thm:decdec} and~\ref{thm:mom} respectively.
They illustrate the strength of our method and,
we hope, give our reader an intuitive understanding of this paper.

Let $T_1, T_2, \ldots$ be a sequence of maps~\eqref{eq:LSV} corresponding to
parameters $\gamma_1, \gamma_2, \ldots$, and suppose that $\sup_n \gamma_n \leq \gamma^*$
with a fixed $\gamma^* \in (0,1)$.
As in~\cite{LSV99}, for some $a > 2^{\gamma^*} (\gamma^* + 2)$ we let
\begin{equation}
    \label{eq:cone}
    \begin{aligned}
        \cC_* = \Bigl\{ f\in C((0,1])\cap L^1 :
            \text{} & \text{$f \ge 0$, $f$ is decreasing,} 
            \\
            & \text{$x^{\gamma^* + 1}f(x)$ is increasing,
            $f(x)\le a x^{-\gamma^*} \int_0^1 f(y) \; dy $} \Bigr\} 
            .
    \end{aligned}
\end{equation}
Then $\cC_*$ is a convex cone of functions,
containing densities of all absolutely continuous probability measures
invariant under maps~\eqref{eq:LSV} with parameters in $(0, \gamma^*]$.

\begin{thm}
    \label{thm:main:memory}
    Suppose that $\mu$ and $\mu'$ are probability measures on $[0,1]$ with H\"older densities
    and $\nu$ is a probability measure on $[0,1]$ with density in $\cC_*$.
    Then:
    \begin{enumerate}[label=(\alph*)]
        \item\label{thm:main:memory:normal}
            $|(T_{1,n})_* (\mu - \nu)| = O(n^{- 1/\gamma^* + 1})$,
        \item\label{thm:main:memory:unexpected}
            $|(T_{1,n})_* (\mu - \mu')| = O(n^{- 1/\gamma^*})$.
    \end{enumerate}
\end{thm}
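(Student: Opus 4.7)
Theorem~\ref{thm:main:memory} is presented as an application of the forthcoming abstract Theorem~\ref{thm:decdec}. Independently of that, the standard route for such a statement in the LSV setting has three stages: induce to a uniformly expanding base, couple on the base, and read off the rate from the tail of the return time.

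\emph{Stage 1: induced tower.} For each $T_n$, let $\hat T_n \colon Y \to Y$ be the first-return map to $Y = (1/2, 1]$, with return time $\htau_n$. The classical LSV analysis gives, uniformly in $n$: $\hat T_n$ is piecewise uniformly expanding with bounded distortion; the level sets $\{\htau_n = k\}$ are intervals of Lebesgue length $\lesssim k^{-1-1/\gamma_n}$, so $\Leb(\htau_n > k) = O(k^{-1/\gamma^*})$; and pushforwards by $\hat T_n$ preserve a suitable cone of densities on $Y$ induced by $\cC_*$. These facts let me represent the composition $T_{1,n}$ as a nonstationary Young tower over $Y$ with polynomial return-time tails of exponent $1/\gamma^*$.

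\emph{Stage 2: part (a).} After lifting to the tower, $\mu$ and $\nu$ project onto base densities that I would couple via a nonstationary version of the LSV cone contraction, obtaining rapid (in fact exponential) decay of the base-level total variation. The only remaining contribution to $|(T_{1,n})_*(\mu - \nu)|$ comes from mass that has not yet returned to $Y$ by time $n$, namely the integrated tail
\[
    \sum_{k > n} \Leb(\htau > k) = O(n^{1-1/\gamma^*}),
\]
which yields part~\ref{thm:main:memory:normal}. The preliminary decomposition of a Hölder density as a cone element plus a small remainder is classical and routine.

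\emph{Stage 3: part (b), the main obstacle.} The faster rate $n^{-1/\gamma^*}$ must come from a cancellation specific to the Hölder--Hölder comparison. The heuristic I would implement is that the ``slow'' tail error in $(T_{1,n})_*\mu - (T_{1,n})_*\nu$ from Stage 2 has, to leading order, the form $A_\mu \cdot n^{1-1/\gamma^*} \cdot (\text{signed measure depending only on the dynamics})$, where the amplitude $A_\mu$ is universal among Hölder probability densities: it is determined by $\int f_\mu = 1$ and the local structure of the maps near the indifferent fixed point $0$, not by $f_\mu(0)$ or its Hölder seminorm. Consequently $A_\mu = A_{\mu'}$ and the leading error cancels in $(T_{1,n})_*(\mu - \mu')$, leaving only the subleading contribution of size $\Leb(\htau > n) = O(n^{-1/\gamma^*})$ -- that is, the single-term tail itself, without the extra integration that cost one power in part (a). Making this rigorous -- in particular, proving universality of $A$ uniformly across the sequence $(T_n)$ -- is the technical heart of the argument, and is presumably where the abstract framework of Theorem~\ref{thm:decdec} pays off: it should encapsulate the separation of a finite-order ``universal'' part, shared by any Hölder initial density, from a rapidly decaying remainder that governs Hölder--Hölder comparisons.
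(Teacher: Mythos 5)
Your Stage 3 misidentifies the mechanism behind the faster rate in part~\ref{thm:main:memory:unexpected}, and this is a genuine gap rather than a stylistic difference. There is no cancellation of a ``universal leading amplitude'' in the paper's argument, and proving such a sharp first-order asymptotic (\`a la Gou\"ezel's stationary analysis) for an arbitrary sequence of maps would be a far harder problem than the theorem itself. The actual mechanism is elementary: the coupling rate produced by the abstract Theorem~\ref{thm:decdec} is governed by the \emph{tail bound} $r(n)$ of the initial measure, i.e.\ by the mass it assigns to the set of points that have not entered $Y=(1/2,1]$ within $n$ steps. That set is contained in $(0,x_n]\cup(1/2,y_n]$ with $x_n,\,y_n-1/2 \le Cn^{-1/\gamma^*}$. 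A H\"older density is bounded, so this mass is $O(n^{-1/\gamma^*})$; a density in $\cC_*$ may blow up like $x^{-\gamma^*}$ at the origin, so one only gets $O(n^{-1/\gamma^*+1})$. Theorem~\ref{thm:decdec} (first bullet, with $\beta'=\beta=1/\gamma^*$) then converts the tail bound directly into the memory-loss rate, because the total coupling time $S=X_1+\cdots+X_\tau$ has its tail dominated by that of $X_1\sim r$; subsequent return times contribute only $O(n^{-\beta})$. So the two rates in (a) and (b) reflect nothing more than the different first-entry tails of the two classes of initial measures. (A technical point you would also need: a H\"older density is not log-Lipschitz where it vanishes, which the paper handles by replacing $\mu$ with $(\mu+c\,m_X)/(1+c)$.)

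Your Stages 1--2 are closer in spirit to the paper, but you underestimate where the work lies. The claim that the base-level coupling is ``exponential'' and that the remaining error is just the integrated return-time tail is the stationary renewal picture; in the nonstationary setting the induced maps change at every step, orbits of different points return to $Y$ at different and non-synchronized times, and one cannot invoke a spectral gap or an invariant cone for a fixed induced operator. The paper's substitute is the explicit decomposition scheme of Section~\ref{sec:mixing}: Proposition~\ref{prop:onedec} extracts a fixed fraction $\theta$ of the reference measure $m$ at each matching opportunity, Lemmas~\ref{lem:mixdec} and~\ref{lem:proproprobab} iterate this into the representation $\mu=\sum_n\bP(S=n)\mu_n$ with $(T_{1,n})_*\mu_n=m$, and Proposition~\ref{prop:Stail} estimates $\bP(S\ge n)$. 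That renewal-type bookkeeping, which you label routine, is the technical core; the assumption (NU:7) on uniform returns to $Y$ is exactly what replaces aperiodicity/spectral-gap arguments. Your proposal as written does not contain a workable substitute for it, and its central idea for part (b) would not lead to a proof.
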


Let $v_n \colon [0,1] \to \bR$ be a family of H\"older continuous observables
with uniformly bounded H\"older norm, i.e.\ 
$\sup_n \|v_n\|_\eta < \infty$ for some $\eta \in (0,1]$,
where
\( \|v\|_\eta = \sup_x |v(x)| + \sup_{x \neq y} |v(x) - v(y)| / |x-y|^\eta \).
Let $\mu$ be a probability measure with density in $\cC_*$.
On the probability space $([0,1], \mu)$, define a random process
\[
    V_n
    = v_0 + v_1 \circ T_{1,1} + \cdots + v_{n-1} \circ T_{1, n-1}
    .
\]
Let $S_n = V_n - \bE V_n$ and $S_n^* = \max_{k \leq n} |S_k|$.

\begin{thm}
    \label{thm:main:moments}
    Let $n \geq 0$.
    \begin{enumerate}[label=(\alph*)]
        \item If $\gamma^* \in (0,1/2)$, then
            \[
                \bE (S_n^*)^{2(1/\gamma^* - 1)}
                \leq C n^{1/\gamma^* - 1}
                .
            \]
        \item If $\gamma^* = 1/2$, then
            \[
                \bE (S_n^*)^2
                \leq C n \log (n + 1)
            \]
            and for $p > 2$,
            \[
                \bE (S_n^*)^p
                \leq C_p n^{p-1}
                .
            \]
        \item If $\gamma^* \in (1/2,1)$, then for all $t > 0$,
            \[
                \bP(S_n^* \geq t) \leq C n t^{-1/\gamma^*}
                .
            \]
    \end{enumerate}
    Here $C$ denotes constants which depend only on $\gamma^*$ and $\sup_n \|v\|_\eta$,
    and $C_p$ depends in addition on $p$.
\end{thm}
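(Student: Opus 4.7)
The plan is to deduce Theorem~\ref{thm:main:moments} as a direct application of the abstract moment bound, Theorem~\ref{thm:mom}; so the proof reduces to verifying that the sequence of LSV maps together with the uniformly H\"older family of observables fits into the abstract framework.

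First, I would exploit the standard inducing scheme on the reference set $Y = (1/2, 1]$. For each $T_n$ with parameter $\gamma_n \leq \gamma^*$, the first-return branch structure on $Y$ is uniformly expanding with uniformly controlled distortion, independent of the sequence. The crucial quantitative input is the polynomial return-time tail $\mu(\tau > k) \leq C k^{-1/\gamma^*}$, uniformly over $n$ and over measures $\mu$ with density in $\cC_*$. This bound is extracted from the cone inequality $f(x) \leq a x^{-\gamma^*} \int f$, combined with the fact that the set of points in $[0,1/2]$ requiring more than $k$ iterations of the slow branch to escape has diameter of order $k^{-1/\gamma^*}$.

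Second, I would track the regularity of the observables through the inducing. Since $\sup_n \|v_n\|_\eta < \infty$ and the induced branches have uniform distortion, the induced observables are dominated pointwise by $\|v\|_\infty \cdot \tau$ and possess the kind of cylinder-wise H\"older regularity (``separately H\"older'') required by Theorem~\ref{thm:mom}. With these ingredients, the abstract bound applies and the three regimes \ref{thm:main:memory:normal}--\ref{thm:main:memory:unexpected} of the conclusion simply reflect the $L^p$-integrability of $\tau$: when $\gamma^* < 1/2$ so that $\tau \in L^q$ for $q$ close to $1/\gamma^* > 2$, a Rosenthal-type bound yields the CLT-scaling $\bE (S_n^*)^{2(1/\gamma^* - 1)} \leq C n^{1/\gamma^* - 1}$, with the exponent $2(1/\gamma^* - 1)$ dictated by the polynomial order $1/\gamma^*$ of the return-time tail; at the critical value $\gamma^* = 1/2$ a logarithmic correction appears at $p = 2$, while for $p > 2$ only the trivial $n^{p-1}$ bound coming from $|v_k| \leq \sup_n\|v_n\|_\eta$ is available; for $\gamma^* > 1/2$, where $\tau \notin L^2$, only a Marcinkiewicz-type weak bound $\bP(S_n^* \geq t) \leq C n t^{-1/\gamma^*}$ survives.

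The main obstacle, and the real technical heart of the paper, is establishing these ingredients uniformly over sequences of maps and over measures in $\cC_*$ --- without invariant measures, a fixed Young tower, or spectral-gap techniques --- by tracking the evolution of the cone $\cC_*$ through arbitrary compositions $T_{1,n}$. Once this nonstationary dynamical content is in place, each of (a), (b), (c) follows by feeding the appropriate moment of $\tau$ into the abstract bound in Theorem~\ref{thm:mom} and checking that the constants depend only on $\gamma^*$ and $\sup_n \|v_n\|_\eta$ (and additionally on $p$ in case (b)).
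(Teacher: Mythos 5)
Your overall strategy is the right one and matches the paper: Theorem~\ref{thm:main:moments} is derived by verifying that the LSV maps with parameters in $(0,\gamma^*]$ fit the abstract Section~\ref{sec:results} framework (Propositions~\ref{prop:LSV:NUE}--\ref{prop:invreg}), applying Theorem~\ref{thm:mom} with $\beta = 1/\gamma^*$ and $\Lip_k(H)$ bounded by $\sup_n\|v_n\|_\eta$ for $k<n$ and zero otherwise, and then translating the three regimes $\beta\in(1,2)$, $\beta=2$, $\beta>2$ into $\gamma^*\in(1/2,1)$, $\gamma^*=1/2$, $\gamma^*\in(0,1/2)$. However, there are concrete gaps.

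First, your tail exponent for $\cC_*$-measures is wrong, and your own calculation exposes it. You intend to integrate a density bounded by $ax^{-\gamma^*}$ over a set of diameter $O(k^{-1/\gamma^*})$; this gives $\int_0^{Ck^{-1/\gamma^*}} Cx^{-\gamma^*}\,dx = O(k^{-1/\gamma^*+1})$, not $O(k^{-1/\gamma^*})$. This is not a harmless slip: Theorem~\ref{thm:mom} requires the tail bound $C_\beta n^{-\beta+1}$ on the initial measure precisely because $\cC_*$-densities blow up near $0$; the faster rate $n^{-\beta}$ is only available under the reference measure $m$ (via $h$) or for bounded densities. The power $\beta-1=1/\gamma^*-1$ appearing in the initial tail is also what selects the exponent $2(\beta-1)$ in case (a), so you need to get this right.

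Second, and more substantively, the hypothesis of Theorem~\ref{thm:mom} is not merely the tail bound but also \emph{regularity} of $\mu$: the log-Lipschitz seminorm of every pushforward $(T_{1,\tau(a)})_*(\mu|_a)$ must be $\le K_1$. Your plan verifies the tail but says nothing about this. For $\cC_*$-densities the regularity check is Proposition~\ref{prop:invreg}, which uses both cone conditions (decreasing and $x^{\gamma^*+1}f$ increasing), the distortion estimate, and the geometric control $x_n-x_{n+1}\le x_{n+1}$; it is not automatic. Also note that Theorem~\ref{thm:mom} does not induce the observable onto $Y$, so ``induced observables dominated by $\|v\|_\infty\tau$'' is not the mechanism; one simply reads off $\Lip_k(H)$ directly.

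Third, you describe the technical heart as ``tracking the evolution of the cone $\cC_*$ through arbitrary compositions,'' but the paper does not propagate $\cC_*$ at all: the cone is used only once, to establish initial regularity and the initial tail. What the abstract argument actually runs on is (i) the decomposition/coupling Theorem~\ref{thm:decdec} (memory loss), which is used inside the martingale estimate via Lemma~\ref{lem:step-coupling}, and crucially employs the faster memory loss rate $\ell\mapsto\min\{\tau\ell^{-\beta},\ell^{-\beta+1}\}$ for log-Lipschitz measures on $Y$, and (ii) the quadratic variation bounds of Section~\ref{sec:mart}. As the paper notes in Section~\ref{sec:discussion}, for $\gamma^*\in(1/2,1)$ and general separately H\"older observables the ``unexpected'' faster rate of Theorem~\ref{thm:main:memory}\ref{thm:main:memory:unexpected} is essential; a plan that only invokes the $L^q$-integrability of $\tau$ and does not engage with the memory-loss decomposition would not close in that regime.
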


\begin{rmk}
    The moment bounds from Theorem~\ref{thm:main:moments}, together with $\|S_n^*\|_\infty \leq C n$,
    can be used to obtain optimal bounds on $\bE(S^*_n)^p$ for all $p \in [1, \infty)$, as it is done in
    Gouez\"el and Melbourne~\cite{GM14}.
\end{rmk}

\begin{rmk}
    Theorem~\ref{thm:main:moments} is stated for Birkhoff sums. We note that its abstract counterpart,
    Theorem~\ref{thm:mom}, is stated for \emph{separately H\"older} observables,
    of which Birkhoff sums are a particular case.
\end{rmk}

The paper is organized as follows. In Section~\ref{sec:discussion} we comment on our
results. In Section~\ref{sec:results} we state the abstract versions of
Theorems~\ref{thm:main:memory} and~\ref{thm:main:moments}.
Sections~\ref{sec:mixing},~\ref{sec:mom:new} and~\ref{sec:mart} contain the proofs.

\section{Discussion}
\label{sec:discussion}

\subsection{Theorem~\ref{thm:main:memory}}

The two bounds in Theorem~\ref{thm:main:memory} are known in the contexts of homogeneous Markov chains,
see Lindvall~\cite{Li79} and references therein, and of stationary dynamical systems,
see Gou\"ezel~\cite{G04}.
In the nonstationary case, prior methods do not apply and our result is new.
We improve the best previously known
bound $O\bigl(n^{-1/\gamma^* + 1} (\log n)^{1/\gamma^*}\bigr)$ by Aimino, Hu, Nicol, T\"or\"ok
and Vaienti~\cite{AHNTV15}.

For a stationary dynamical system, Theorem~\ref{thm:main:memory}\ref{thm:main:memory:unexpected}
is new in the sense that the implied constant is explicit in its dependence on basic parameters
of a dynamical system, see Theorem~\ref{thm:decdec}.

A case of special interest is when the parameters $\gamma_n$ are random,
say independently and uniformly distributed in an
interval $[\gamma^-, \gamma^+]$. Then one expects the memory loss to correspond to the
quickest mixing map (i.e.\ the one for $\gamma^-$) for almost every sequence of parameters.
For the maps~\eqref{eq:LSV} such results are proved by Bahsoun, Bose and Ruziboev~\cite{BBR19}
with rate $O(n^{- 1 / \gamma^- + 1 + \delta})$ for every $\delta > 0$.
In contrast, we work in the worst case scenario, i.e.\ our bounds hold
for \emph{every} sequence of parameters and correspond to the slowest mixing map.
We conjecture that the bound of~\cite{BBR19} can be improved to at least $O(n^{- 1 / \gamma^- + \delta})$
for measures with H\"older densities, as in~Theorem~\ref{thm:main:memory}\ref{thm:main:memory:unexpected}.

\subsection{Theorem~\ref{thm:main:moments}}
In the stationary case, versions of Theorem~\ref{thm:main:moments} can be found in
Gou\"ezel and Melbourne~\cite{GM14} and in Dedecker and Merlev\`ede~\cite{DM15}.
These moment bounds are known to be optimal (see~\cite{GM14}),
hence our results are optimal as well.

\begin{rmk}
    While Theorem~\ref{thm:main:moments} gives optimal bounds for a general
    measure $\mu$ with density in $\cC_*$, it is natural to ask if one can get better
    bounds for nice measures, such as Lebesgue. We do not answer this question directly,
    yet we refer the reader to Dedecker, Gou\"ezel and Merlev\`ede~\cite[Section 3]{DGM18},
    where lower bounds on tails of Birkhoff sums are obtained for examples of related models:
    Markov chains and Young towers with polynomial tails.
    Their proof is written for probability measures starting on the ``base'' of
    the tower, which roughly corresponds to the Lebesgue measure for the maps~\eqref{eq:LSV},
    and their lower bounds are $\bP(S_n \geq x) \geq C n / x^p$ for all
    $c_1 n^{1/p} < x < c_2 n$, where $p$ corresponds to our $1/\gamma$.
    This hints that our bounds cannot be improved for measures such as Lebesgue.
\end{rmk}

As in~\cite{GM14}, we prove concentration bounds not only for Birkhoff sums, but for
a more general class of separately Lipschitz (or separately H\"older) functions on $[0,1]^\bN$,
see Theorem~\ref{thm:mom} and Remark~\ref{rmk:Lipschitz}.

Theorem~\ref{thm:main:moments} improves the moment bounds in
Nicol, Pereira and T\"or\"ok~\cite{NPT19} and Su~\cite{S19},
and implies the following bounds on large and moderate deviations:

\begin{cor}
    \label{cor:dev}
    In the notation of Theorem~\ref{thm:main:moments}, for every $p > 2$,
    \begin{equation}
        \label{eq:LD}
        \begin{aligned}
            \mu \bigl\{ | S_n / n | \geq \eps \bigr\}
            \le
            \begin{cases}
                C \eps^{- 2 ( 1/\gamma^* - 1) }  n^{ - (1/\gamma^* - 1) }, & \gamma^* \in (0,1/2)
                , \\
                C_p \eps^{-p} n^{-1}, & \gamma^* = 1/2
                , \\
                C \eps^{-1/\gamma^*} n^{-(1/\gamma^* - 1)}, & \gamma^* \in (1/2, 1)
                .
            \end{cases}
        \end{aligned}
    \end{equation}
    Further, for $\tau > 0$,
    \begin{equation}
        \label{eq:MD}
        \begin{aligned}
            \mu \bigl\{ | S_n / n^\tau | \geq \eps \bigr\}
            \le 
            \begin{cases}
                C \eps^{- 2 ( 1/\gamma^* - 1) }  n^{ - (2 \tau - 1) (1/\gamma^* - 1) }, & \gamma^* \in (0,1/2)
                , \\
                C \eps^{-2} n^{-(2\tau - 1)} \log (n+1), & \gamma^* = 1/2
                , \\
                C \eps^{-1/\gamma^*} n^{-(\tau/\gamma^* - 1)}, & \gamma^* \in (1/2, 1)
                .
            \end{cases}
        \end{aligned}
    \end{equation}
\end{cor}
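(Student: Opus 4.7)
The corollary is a routine consequence of Theorem~\ref{thm:main:moments} via Markov's inequality, applied case by case and combined with the observation that $|S_n| \le S_n^*$.

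The plan is as follows. For part~\ref{thm:main:moments:normal} of Theorem~\ref{thm:main:moments}, i.e.\ $\gamma^* \in (0,1/2)$, I would set $q = 2(1/\gamma^* - 1)$ and apply Markov's inequality to $(S_n^*)^q$: for any $t > 0$,
\[
    \mu\bigl\{|S_n| \ge t\bigr\} \le \mu\bigl\{S_n^* \ge t\bigr\} \le t^{-q} \, \bE (S_n^*)^q \le C \, t^{-q} \, n^{1/\gamma^* - 1}.
\]
Substituting $t = \eps n$ gives the first line of~\eqref{eq:LD}, and substituting $t = \eps n^\tau$ gives the first line of~\eqref{eq:MD}, since the exponent of $n$ becomes $1/\gamma^* - 1 - 2\tau(1/\gamma^* - 1) = -(2\tau - 1)(1/\gamma^* - 1)$. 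For the boundary case $\gamma^* = 1/2$ (part~(b) of Theorem~\ref{thm:main:moments}), the same Markov argument with the $L^p$ moment bound $\bE(S_n^*)^p \le C_p n^{p-1}$ and $t = \eps n$ yields the second line of~\eqref{eq:LD}; for the moderate deviations I instead use the sharper $L^2$ estimate $\bE (S_n^*)^2 \le C n \log(n+1)$ with $t = \eps n^\tau$ to obtain the second line of~\eqref{eq:MD}. Finally, for $\gamma^* \in (1/2,1)$, the tail bound $\bP(S_n^* \ge t) \le C n t^{-1/\gamma^*}$ is already in the correct form; taking $t = \eps n$ and $t = \eps n^\tau$ produces the third lines of~\eqref{eq:LD} and~\eqref{eq:MD} directly.

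There is no substantive obstacle: each bound reduces to algebra with the exponents of $n$. The only step worth double-checking is keeping track of which regime of $\gamma^*$ goes with which moment bound in Theorem~\ref{thm:main:moments}, and making sure one uses the $L^2$ (not $L^p$, $p>2$) bound in the moderate deviations at $\gamma^* = 1/2$ so as to recover the sharp $\log(n+1)$ factor.
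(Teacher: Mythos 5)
Your proof is correct and is exactly the standard Markov's-inequality argument the authors have in mind; the paper does not spell out a separate proof of the corollary but simply asserts it follows from Theorem~\ref{thm:main:moments}. Your exponent bookkeeping checks out in all six cases, and you correctly observe that at $\gamma^* = 1/2$ the large-deviation bound needs the $L^p$ estimate while the moderate-deviation bound needs the sharper $L^2$ estimate to retain the $\log(n+1)$ factor.
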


Compared to results for stationary dynamics,~\eqref{eq:LD} agrees with the
optimal large deviation bounds, see Melbourne~\cite{M09} and also Pollicott and Sharp~\cite{PS09}.
In turn,~\eqref{eq:MD} is as good as one can infer from moment bounds,
but otherwise for $\gamma^* \in (0,1/2]$ there are more interesting inequalities, see
Dedecker, Gou\"ezel and Merlev\`ede~\cite{DGM18}.

In the nonstationary case,~\eqref{eq:LD} is a slight improvement over the bound
\[
    \mu \bigl\{ | S_n / n | \geq \eps \bigr\}
    \le C_p n^{-(1/\gamma^* - 1)} (\log n)^{1/\gamma^*} \eps^{-2p}
    \qquad \text{ for each } p > \max\{1, 1/\gamma^* - 1\}
\]
from~\cite[Theorem~4.1]{NPT19}. We remove the logarithmic term, get a better power
of $\eps$ when $\gamma^* \in (1/2, 1)$ and allow the observables $v_n$ to depend on $n$.

\subsection{Quasistatic dynamical systems}

The original motivation for our project is a question from quasistatic dynamical systems (QDS).
These are a class of nonstationary dynamical systems introduced by Dobbs and Stenlund~\cite{DS16}
to model situations where external influences cause the observed system to transform slowly over time.
We refer the reader to~\cite{DS16} for the abstract definition
of the model and discussion on its physical significance. A special class of QDSs
described by the intermittent family~\eqref{eq:LSV} was studied 
by Lepp\"anen and Stenlund~\cite{L18,LS16}:
the evolution of states is described by compositions of the form 
\begin{align*}
    x_{n, k}
    = T_{\gamma_{n,k}} \circ \cdots \circ T_{\gamma_{n,1}}(x),    \qquad 0 \le k \le n,
\end{align*}
where $T_{\gamma_{n,k}}$ is the map~\eqref{eq:LSV} with parameter $\gamma_{n,k} \in (0,1)$,
and $\{\gamma_{n,k} : 0 \leq k \leq n\}$ is a triangular array such that
\begin{align}\label{eq:qds-approx}
    \lim_{n\to\infty} \gamma_{n, \lfloor nt \rfloor }
    = \Gamma_t
    ,
\end{align}
where $\Gamma \colon [0, 1] \to  (0, 1)$ is a sufficiently regular curve.
Starting from an initial state $x \in X = [0,1]$, $x_{n,k}$ is the state of the system after
$k$ steps on the $n$-th level of the array $\{ \gamma_{n,k}\}$.
The levels of the array approximate $\Gamma$ ever more accurately as $n$ increases.
Hence the intermittent QDS is a setup of intermittent systems with slowly changing parameters.
Given an initial distribution $\mu$ for $x$, one is interested in the statistical properties of 
$(x_{n,k})_{k=0}^n$ in the limit $n\to\infty$.

Let $v : X \to \bR$ be a Lipschitz continuous observable. Consider the fluctuations $\xi_{n} \colon X \times [0,1] \to \bR$ 
defined by
\begin{align*}
\xi_n(x,t)
& = n^{-\frac12}  \biggl[ S_n(x,t) - \int_0^1 S_n(x,t) \, d \mu(x) \biggr]; \\
S_n(x,t)
        & =  \int_0^{nt} v(x_{n, \floor{s}}) \, ds.
\end{align*}
One may view $\xi_n(x,t)$ as a random element in the space $C[0,1]$ of continuous functions.
Under the assumptions that (a) $\Gamma_t$ is H\"{o}lder continuous with
$ \Gamma_t \leq \gamma^* < 1/3$, (b) the density of $\mu$ belongs to the cone $\cC_*$, and 
(c) the convergence~\eqref{eq:qds-approx} happens polynomially
fast and uniformly in $t$, it was shown in~\cite{L18} that 
$\xi_n$ converges in distribution to $\chi(t) = \int_0^t \sigma_s(v) \, dW_s$.
Here $s \mapsto \sigma_s(v)$ is a deterministic nonnegative continuous function and  $W$
is a standard Brownian motion. Theorem~\ref{thm:main:moments} allows us to extend this
result from $\gamma^* < 1/3$ to $\gamma^* < 1/2$. Indeed, by \cite[Theorem~1.3]{L17},
it suffices to show that $\xi_n$ are tight in $C[0,1]$, which follows by the Kolmogorov
criterion since Theorem~\ref{thm:main:moments} implies the existence of a small
$\varepsilon > 0$ such that 
\begin{align*}
    \int_0^1 \bigl|  \xi_n( x,  t + \delta ) - \xi_n(x, t)  \bigr|^{2 + \varepsilon}  \, d\mu(x) 
    = O(\delta^{1 + \frac{\varepsilon}{2}})
\end{align*}
as $n \to \infty$, whenever $0 \le t \le t + \delta \le 1$.

Alternatively, one can use the moment bounds from~\cite{NPT19} or~\cite{S19},
but these were not available when we started this project.

\subsection{Mixing}

On early stages of this project we attempted to prove Theorem~\ref{thm:main:moments}
without relying on mixing properties of the maps. For stationary dynamics,
there exist proofs which give close to optimal moment bounds~\cite{KKM18} which do not depend
on the speed of mixing, and moreover do not need mixing at all.
We found, however, that mixing is indispensable in the nonstationary
setup. Problems appear already when a dynamical system is fixed but observables are
changing. As a simple example of such system, consider the Markov chain
$g_0, g_1, \ldots$ on state space $\{A, B, C\}$ with $g_0$ distributed according to some
probability measure and the following transition probabilities:
\begin{equation}
    \label{eq:MC}
    \begin{aligned}
        \begin{tikzpicture}[->, >=stealth', auto, semithick, node distance=3cm]
            \tikzstyle{every state}=[fill=white,draw=black,thick,text=black,scale=1]
            \node[state]    (A)                     {$A$};
            \node[state]    (B)[left of=A]    {$B$};
            \node[state]    (C)[right of=A]   {$C$};
            \path (A) edge[bend right]    node[above]{$1/2$}       (B);
            \path (A) edge[bend left]     node[above]{$1/2$}       (C);
            \path (B) edge[bend right]    node[below]{$1$}         (A);
            \path (C) edge[bend left]     node[below]{$1$}         (A);
        \end{tikzpicture}
    \end{aligned}
\end{equation}
This Markov chain is 2-periodic and thus not mixing.
Let $v_n \colon \{A, B, C\} \to \bR$ and $S_n = \sum_{j=0}^{n-1} v_j(g_j)$.
If $v_n$ do not depend on $n$, then $n^{-1/2} (S_n - \bE S_n )$ converges weakly
to a normal random variable. But
\[
    \text{if} \quad
    v_n(g) = 
    \begin{cases}
        (-1)^{n+1}, & g = A
        \\
        (-1)^{n}, & g \in \{B,C\}
    \end{cases}
    , \quad \text{then} \quad
    S_n = 
    \begin{cases}
        -n, & g_0 = A
        \\
        n,  & g_0 \in \{B,C\}
    \end{cases}
    .
\]
Then $S_n$ does not satisfy any interesting concentration inequalities.

Also, we found that for $\gamma^* \in (0,1/2)$, Theorem~\ref{thm:main:moments} can be proved
using memory loss with asymptotics $O(n^{-1/\gamma^* + 1})$ as in
Theorem~\ref{thm:main:memory}\ref{thm:main:memory:normal},
and close to optimal results can be obtained with the slightly weaker bound
$O\bigl(n^{-1/\gamma^* + 1} (\log n)^{1/\gamma^*}\bigr)$ from~\cite{AHNTV15},
as it is done in~\cite{NPT19}.
For $\gamma^* \in (1/2, 1)$ the situation is significantly more complicated. We guess that 
the bound $O(n^{-1/\gamma^* + 1})$ would suffice for Birkhoff sums, see~\cite[Proposition~A.1]{DM15}.
But for the generality of separately H\"older observables we do not see a way around
Theorem~\ref{thm:main:memory}\ref{thm:main:memory:unexpected}, which is unfortunate
because it is significantly harder to prove than
Theorem~\ref{thm:main:memory}\ref{thm:main:memory:normal}.
Luckily, it is also more interesting.


\section{Abstract setup and results}
\label{sec:results}

\subsection{Nonstationary nonuniformly expanding dynamical system}
\label{sec:nnue}

Let $(X,d)$ be a metric space which is bounded, separable and universally
measurable.\footnote{Most spaces are universally measurable, see Shortt~\cite{S84}.}
We endow $X$ with the Borel sigma-algebra, and we only work with measurable sets.

Let $Y \subset X$ and let $m$ be a probability measure on $X$ with $m(Y) = 1$.
Let $\cT$ be a class of measurable transformations of $X$.
Given a sequence of transformations $T_1, T_2, \ldots$, we denote
$T_{k,\ell} = T_\ell \circ \cdots \circ T_k$. (If $k > \ell$,
then $T_{k, \ell}$ is the identity map.)

For a nonnegative measure $\mu$ on $Y$ with density $\rho = d\mu / dm$, we denote
by $|\mu|_{\LL}$ the Lipschitz seminorm of the logarithm of $\rho$:
\[
    |\mu|_{\LL}
    = \sup_{y \neq y' \in Y} \frac{|\log \rho(y) - \log \rho(y')|}{d(y,y')}
    ,
\]
with a convention that $\log 0 = -\infty$ and $\log 0 - \log 0 = 0$.

We suppose that there exist constants $\lambda > 1$, $K > 0$, $\delta_0 > 0$ and $n_0 \geq 1$,
and a function $h \colon \{0,1,\ldots\} \to [0,\infty)$ such that the following assumptions
hold for each sequence $T_1, T_2 \ldots \in \cT$.

For $x \in X$, let
\[
    \tau(x)
    = \inf \{n \geq 1 : T_{1,n} (x) \in Y \}
\]
be the first return time to $Y$.
First, we assume that there is a finite or countable partition $\cP$ of $X$,
up to an $m$-zero measure set, such that $Y$ is $\cP$-measurable and for each $a \in \cP$:
\begin{enumerate}[label=(NU:\arabic*)]
    \item $m(a) > 0$.
    \item $\tau$ is constant on $a$ with value $\tau(a)$.
    \item If $a \subset Y$, then the map $F_a = T_{1,\tau(a)} \colon a \to Y$ is a bijection,
        and for all $y,y' \in a$,
        \[
            d(F_a(y), F_a(y')) \geq \lambda d(y,y')
            .
        \]
        Further, $F_a$ is nonsingular with log-Lipschitz Jacobian:
        \[
            \zeta = \frac{d (F_a)_* (m|_a)}{dm}
            \quad \text{satisfies} \quad
            |\zeta|_{\LL} \leq K
            .
        \]
    \item For all $x,x' \in a$, with $F_a = T_{1, \tau(a)}$ as above,
        \[
            \max_{0 \leq j \leq \tau(a)} d(T_{1,j}(x), T_{1,j}(x'))
            \leq K d(F_a(x), F_a(x'))
            .
        \]
\end{enumerate}
In other words, the first return map $y \mapsto T_{1, \tau(y)}(y)$ is full branch Gibbs-Markov,
and returns from outside of $Y$ have bounded backward expansion.

Next, to quantify mixing we assume that:
\begin{enumerate}[label=(NU:\arabic*),resume]
    \item $m(\tau \geq n) \leq h(n)$ for all $n$.
    \item\label{ass:integrable}
        $\sum_{n = 1}^\infty h(n) \leq K$.
    \item\label{ass:mixing}
        $m(T_{1,n}^{-1}(Y)) \geq \delta_0$ for every $n \geq n_0$.
\end{enumerate}

\begin{rmk}
    Since $\int \tau \, dm = \sum_{n \geq 1} m(\tau \geq n) \leq \sum_{n \geq 1} h(n)$,
    assumption~\ref{ass:integrable} guarantees that the return times $\tau$, parametrized by
    sequences of maps, are uniformly integrable.
\end{rmk}

\begin{rmk}
    To satisfy assumption~\ref{ass:mixing}, it is sufficient that (a) other assumptions hold and
    (b) there exist $\delta_0' > 0$ and coprime integers $p_1, p_2, \ldots, p_N$ such that
    $m(\tau = p_n) \geq \delta_0'$ for each $n$.
    The proof repeats that for the stationary dynamics, see~\cite[Section~4.2]{KKM19}.
\end{rmk}

\begin{rmk}
    \label{rmk:Lipschitz}
    In papers on nonuniformly expanding maps one usually assumes that the
    Jacobian is log-H\"older. We assume log-Lipschitz purely to simplify notation:
    we do not lose generality. If we let $d_\eta(x,y) = d(x,y)^\eta$ with $\eta \in (0,1)$,
    then $d_\eta$ is also a metric, all our assumptions are satisfied on $(X, d_\eta)$ with
    slightly different constants, and $\eta$-H\"older functions with respect to $d$
    are Lipschitz with respect to $d_\eta$.
\end{rmk}

\subsection{Memory loss}

\begin{prop}
    \label{prop:K}
    There exist constants $0 < K_1 < K_2$, depending only on
    $\lambda$ and $K$, such that for each sequence
    $T_1, T_2 \ldots \in \cT$ with the corresponding partition $\cP$
    and return time $\tau$, for each nonnegative measure $\mu$ on $Y$
    with $|\mu|_{\LL} \leq K_2$ and each $a \in \cP$, $a \subset Y$,
    \[
        \bigl| (T_{1,\tau(a)})_* (\mu|_a) \bigr|_{\LL} \leq K_1
        .
    \]
    The constants $K_1$, $K_2$ can be chosen arbitrarily large.
\end{prop}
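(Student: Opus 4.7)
The plan is a one-step contraction estimate: push the density of $\mu|_a$ forward through $F_a = T_{1,\tau(a)}$ and show that the uniform expansion beats the Jacobian distortion, giving a strict improvement of the log-Lipschitz seminorm on each branch.

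Write $\rho = d\mu/dm$. Since $F_a \colon a \to Y$ is a bijection and $(F_a)_*(m|_a) = \zeta\, dm$ by (NU:3), a direct change of variables shows that $(F_a)_*(\mu|_a)$ has density $\rho'(y) = \rho(F_a^{-1}(y))\,\zeta(y)$ with respect to $m$. Taking logs and using the triangle inequality,
\[
|\log\rho'(y) - \log\rho'(y')|
\leq |\log\rho(F_a^{-1}(y)) - \log\rho(F_a^{-1}(y'))|
+ |\log\zeta(y) - \log\zeta(y')|.
\]
The second term is at most $K\,d(y,y')$ by the log-Lipschitz bound on $\zeta$ in (NU:3). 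For the first term, the expansion $d(F_a(x),F_a(x')) \geq \lambda\, d(x,x')$ from (NU:3) gives $d(F_a^{-1}(y),F_a^{-1}(y')) \leq \lambda^{-1} d(y,y')$, so this term is at most $\lambda^{-1}|\mu|_{\LL}\, d(y,y') \leq \lambda^{-1} K_2\, d(y,y')$. Dividing by $d(y,y')$ and taking the supremum,
\[
\bigl|(F_a)_*(\mu|_a)\bigr|_{\LL} \leq \lambda^{-1} K_2 + K.
\]

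To finish, choose any $K_2 > K\lambda/(\lambda-1)$ (which can be taken arbitrarily large) and set $K_1 = \lambda^{-1} K_2 + K$; then $K_1 < K_2$, and the bound holds uniformly in $a$, $\mu$, and the sequence of maps, with $K_1,K_2$ depending only on $\lambda$ and $K$. There is no real obstacle: the one point that deserves care is the change-of-variables formula for the pushforward density --- one must check that the Jacobian enters as a factor $\zeta$ rather than $\zeta^{-1}$, which follows from the definition of $\zeta$ as the density of $(F_a)_*(m|_a)$ together with bijectivity of $F_a$. Once that is set up, the argument reduces to combining the expansion and distortion hypotheses in a single line.
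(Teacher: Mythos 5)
Your proof is correct and takes essentially the same approach as the paper: the paper cites the one-step contraction estimate $\bigl|(T_{1,\tau(a)})_*(\mu|_a)\bigr|_{\LL} \leq K + \lambda^{-1}|\mu|_{\LL}$ from \cite[Proposition~3.1]{KKM19} and then chooses $K_2 > (1-\lambda^{-1})^{-1}K$, $K_1 = K + \lambda^{-1}K_2$, exactly as you do. Your only addition is to unpack the cited inequality via the change of variables $\rho'(y)=\rho(F_a^{-1}(y))\zeta(y)$ and the triangle inequality for logarithms, which is indeed how that standard fact is proved.
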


\begin{proof}
    It is standard, see e.g.\ \cite[Proposition~3.1]{KKM19}, that
    \[
        \bigl| (T_{1,\tau(a)})_* (\mu|_a) \bigr|_{\LL}
        \leq K + \lambda^{-1} |\mu|_{\LL}
        .
    \]
    We can choose any $K_2 > (1-\lambda^{-1})^{-1} K$ and $K_1 = K + \lambda^{-1} K_2$.
\end{proof}

Fix $K_1$, $K_2$ as in Proposition~\ref{prop:K}.

\begin{defn}
    We say that a nonnegative measure $\mu$ on $X$ is \emph{regular}
    if for every $T_1, T_2, \ldots \in \cT$ with the corresponding partition $\cP$
    and every $a \in \cP$,
    \[
        \bigl| (T_{1,\tau(a)})_* (\mu|_{a}) \bigr|_{\LL}
        \leq K_1
        .
    \]
    We say that $\mu$ has \emph{tail bound} $r$, with $r \colon \{0,1,\ldots\} \to [0,\infty)$,
    if for all $n \geq 0$,
    \[
        \mu \bigl( \{ x \in X : T_{1,k}(x) \not \in Y \text{ for all } 1 \leq k < n \}\bigr)
        \leq r(n)
        .
    \]
\end{defn}

\begin{rmk}
    The measure $m$ is regular with tail bound $r(n) = h(n)$, and every
    measure $\mu$ on $Y$ with $|\mu|_{\LL} \leq K_2$ is regular with tail bound
    $r(n) = \mu(Y) e^{K_2} h(n)$.
\end{rmk}

\begin{rmk}
    \label{rmk:forward-tails}
    Let $T_1, T_2, \ldots \in \cT$ and suppose that $h(n) = C n^{-\beta}$ with $\beta > 1$.
    If $\mu$ is a regular measure with tail bound  $r(n) = C n^{-\beta}$, then
    $(T_{1,k})_* \mu$ has tail bound $r_k(n) = C' k n^{-\beta}$, with $C'$ independent of $k$.
    If $r(n) = n^{-\beta + 1}$, then 
    $(T_{1,k})_* \mu$ has tail bound $r_k(n) = C' n^{-\beta + 1}$, again with $C'$ independent of $k$.
    See Proposition~\ref{prop:onedec} and Corollary~\ref{cor:const-tail}.
\end{rmk}

The abstract version of Theorem~\ref{thm:main:memory} is:

\begin{thm}
    \label{thm:decdec}
    Suppose that $\mu$ is a regular probability measure with tail bound $r$.
    Then for each sequence $T_1, T_2, \ldots \in \cT$, there exists a decomposition
    \[
        \mu = \sum_{n=1}^\infty \alpha_n \mu_n,
    \]
    where $\mu_n$ are probability measures and $\alpha_n$ are nonnegative constants
    with $\sum_{n \geq 1} \alpha_n = 1$ such that
    $(T_{1,n})_* \mu_n = m$ for each $n$. The sequence $\alpha_n$
    is fully determined by $K_1$, $K_2$, the constants in the
    definition of nonstationary nonuniformly expanding dynamical system
    ($\diam X$, $K$, $\lambda$, $n_0$, $\delta_0$), and
    the functions $h$ and $r$.
    In particular, $\alpha_n$ does not depend on $\mu$
    in any other way.

    \begin{itemize}
        \item 
            If $h(n) \leq C_\beta n^{-\beta}$ with $\beta > 1$
            and $r(n) \leq C'_\beta n^{-\beta'}$ with $\beta' \in (0, \beta]$,
            then 
            \[
                \sum_{j \geq n} \alpha_j
                \leq C C'_\beta n^{-\beta'}
                ,
            \]
            where $C$ depends only on $C_\beta$, $\beta$, $\beta'$, $K_1$, $K_2$
            and $\diam X$, $K$, $\lambda$, $n_0$, $\delta_0$ (i.e.\ on everything except
            $C'_\beta$).
        \item
            If $h(n) \leq C_\beta n^{-\beta}$ with $\beta > 1$,
            then for $n \geq 2 n_0$,
            \[
                \sum_{j \geq n} \alpha_j
                \leq r(\floor{n / 2} - n_0) + C n^{-\beta} \sum_{j=1}^\infty r(j)
                ,
            \]
            where $C$ depends only on $C_\beta$, $\beta$, $K_1$, $K_2$,
            and $\diam X$, $K$, $\lambda$, $n_0$, $\delta_0$.
        \item
            If $h(n), r(n) \leq C_\beta \exp( - C_\beta' n^\beta)$ with $\beta \in (0,1]$
            and $C_\beta, C'_\beta > 0$, then
            \[
                \sum_{j \geq n} \alpha_j
                \leq C \exp (- C' n^\beta)
                ,
            \]
            where $C$ and $C'$ depend only on $C_\beta$, $C_\beta'$, $\beta$, $K_1$, $K_2$,
            and $\diam X$, $K$, $\lambda$, $n_0$, $\delta_0$.
    \end{itemize}
\end{thm}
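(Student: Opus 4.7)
The proof will proceed by iterative coupling, producing the decomposition one term at a time. Set $\mu^{(0)} = \mu$, and for each $n \geq 1$ extract from the residual mass a submeasure $\alpha_n \mu_n \leq \mu^{(n-1)}$ such that $(T_{1,n})_* \mu_n = m$, then update $\mu^{(n)} = \mu^{(n-1)} - \alpha_n \mu_n$. The critical design choice is that $\alpha_n$ is defined by a universal rule depending only on $K_1, K_2$, the structural constants, and the functions $h$ and $r$---not on $\mu$ itself; this is what makes the decomposition comparable across different measures sharing the same tail bound, and it is the whole point of formulating the statement this way.

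The main technical ingredient is a ``one-step extraction lemma'' built on Proposition~\ref{prop:K}. After a return to $Y$, the returned mass carries log-Lipschitz density bounded by $K_1$, so its Radon--Nikodym derivative with respect to $m$ is bounded below on $Y$ by $e^{-K_1 \diam X}$ times the total returned mass; we can therefore subtract a fixed fraction in the form of $c \cdot m|_Y$. To upgrade this into an extraction of a multiple of $m$ on the whole of $X$ (not just $m|_Y$), I would couple a synchronous ``ghost copy'' of $m$ to the extracted piece: the ghost is also run forward by the dynamics, and the return-time structure of $m$ (its own decomposition according to the partition $\cP$ and $\tau$) is matched, piece by piece, against preimages of $Y$-pieces accumulated on the extracted side. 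Assumption~\ref{ass:mixing}, which guarantees $m(T_{1,n}^{-1}(Y)) \geq \delta_0$ for $n \geq n_0$, ensures the match is always possible at a uniform positive rate.

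Once the extraction mechanism is in place, the three tail estimates follow from bookkeeping. At each step $n$, the uncoupled mass splits into (i) mass that has not yet returned to $Y$, bounded uniformly by $r(n)$, and (ii) mass that has returned but that the universal extraction has not yet consumed, decaying geometrically in the number of successive return cycles, each cycle governed by $h$. In the first case, with $h(n) \leq C_\beta n^{-\beta}$ and $r(n) \leq C_\beta' n^{-\beta'}$ both polynomial, contribution (i) dominates and yields $\sum_{j \geq n} \alpha_j \leq C C_\beta' n^{-\beta'}$. In the second case, with generic $r$, one splits the $n$ steps in half: the ``not yet returned'' piece is evaluated at time $\lfloor n/2 \rfloor - n_0$, and during the second half, the late-return contributions from the $r$-tail are convolved with $h(n/2) \asymp n^{-\beta}$, producing the two explicit terms of the statement. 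The exponential case is analogous, with exponential bounds combining in the standard way.

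The principal obstacle I anticipate is the synchronous upgrade from $m|_Y$-extraction to $m$-extraction on the whole of $X$. Ensuring the ghost copy of $m$ remains dominated by the pushforward of the current residual requires careful tracking of how the partition $\cP$ refines under repeated compositions of different $T_k$'s, combined with uniform log-Lipschitz control so that densities stay comparable across the inductive step. This is where the constants $K_1, K_2$ get pinned down and where uniformity in the sequence $T_1, T_2, \ldots \in \cT$ must be maintained; everything else is essentially bookkeeping on top of this core construction.
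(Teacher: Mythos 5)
Your overall strategy --- iteratively extracting, by a universal rule independent of $\mu$, pieces $\alpha_n\mu_n$ with $(T_{1,n})_*\mu_n = m$ after returns to $Y$, using the log-Lipschitz regularization of Proposition~\ref{prop:K} together with assumption~\ref{ass:mixing} to guarantee a uniform extraction fraction --- is essentially the paper's (Proposition~\ref{prop:onedec}, Lemmas~\ref{lem:mixdec} and~\ref{lem:proproprobab}). However, the obstacle you single out as principal is not there: by hypothesis $m(Y)=1$, so $m=m|_Y$ as measures on $X$ and there is nothing to ``upgrade'' from an extraction of $c\,m|_Y$ to an extraction of a multiple of $m$ on all of $X$; the synchronous ghost-copy coupling you sketch for that purpose is unnecessary.

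The genuine gap is in the tail bookkeeping, which rests on the premise that after each extraction cycle the residual is again ``governed by $h$''. It is not. If a cycle lasts $n$ steps, the uncoupled residual has accumulated mass from every intermediate return to $Y$ during those $n$ steps, and its tail is only bounded by $h_n(\ell)=C_h\sum_{j=0}^{n}h(j+\ell)$ (Proposition~\ref{prop:onedec}\ref{prop:onedec:tail}), which for $h(n)\asymp n^{-\beta}$ is of order $\min\{n\ell^{-\beta},\ell^{-\beta+1}\}$ and degrades as the previous cycle lengthens. Modelling the cycle lengths as random variables $X_1,X_2,\ldots$ with $\bP(X_k\geq\ell\mid X_{k-1})\lesssim h_{X_{k-1}}(\ell-n_0)$ and taming this feedback --- via $\bE(X_k\mid X_{k-1})\leq C+X_{k-1}/2$, hence $\bP(X_k\geq j\mid X_1)\leq Cj^{-\beta}X_1$, hence $\bP(X_k\geq j)\leq CC'_\beta j^{-\beta'}$ after a case analysis on $\beta'\gtrless 1$ --- is the technical core of Proposition~\ref{prop:Stail} and is exactly what makes the first bullet ($n^{-\beta'}$ for every $\beta'\leq\beta$) true. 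Without it your argument either uses the false tail $h$ for later cycles, or, using the correct $n$-uniform bound $\ell^{-\beta+1}$, cannot reach any $\beta'>\beta-1$; the same control is needed for the $Cn^{-\beta}\sum_j r(j)$ term in the second bullet.
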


Theorem~\ref{thm:decdec} is proved in Section~\ref{sec:mixing}.

\begin{rmk}
    \label{rmk:decdec}
    If $\mu$ and $\mu'$ are regular probability measures as in Theorem~\ref{thm:decdec}, then
    \[
        \bigl|(T_{1,n})_* (\mu - \mu')\bigr|
        \leq 2 \sum_{j > n} \alpha_j
        .
    \]
\end{rmk}

\begin{cor}
    \label{cor:decdec}
    Let $\mu$ and $\mu'$ be regular probability measures as in Theorem~\ref{thm:decdec}.
    Let $\Theta \colon X \times X \to \{0,1,2,\ldots \} \cup \{\infty\}$,
    \[
        \Theta(x, x')
        = \inf \{k \geq 0 : T_{1,k}(x) = T_{1,k}(x') \}
        .
    \]
    Then there exists a probability measure $\tmu$ on $X \times X$ with marginals
    $\mu$ and $\mu'$ on the first and second coordinate respectively such that
    \[
        \tmu(\Theta \geq n)
        \leq \sum_{j \geq n} \alpha_j
        .
    \]
\end{cor}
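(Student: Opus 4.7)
The plan is a standard coupling construction. Apply Theorem~\ref{thm:decdec} to $\mu$ and $\mu'$ \emph{simultaneously}: since the $\alpha_n$ produced by the theorem depend on the measure only through the tail bound $r$, I may assume that both decompositions
\[
    \mu = \sum_{n \geq 1} \alpha_n \mu_n, \qquad \mu' = \sum_{n \geq 1} \alpha_n \mu'_n
\]
use the \emph{same} sequence $\alpha_n$ (taking, if necessary, the maximum of the two tail bounds in the hypothesis), with $(T_{1,n})_* \mu_n = m = (T_{1,n})_* \mu'_n$ for every $n \geq 1$.

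The key observation is that the dynamics is deterministic: if $T_{1,n}(x) = T_{1,n}(x')$, then $T_{1,k}(x) = T_{1,k}(x')$ for every $k \geq n$, and therefore $\Theta(x,x') \leq n$. So it suffices, for each $n$, to construct a coupling $\tmu_n$ of $\mu_n$ and $\mu'_n$ that is supported on the set $\{(x,x') : T_{1,n}(x) = T_{1,n}(x')\}$. To do this, disintegrate
\[
    \mu_n = \int_X \mu_n^y \, dm(y), \qquad \mu'_n = \int_X \mu_n'^y \, dm(y),
\]
where $\mu_n^y$ and $\mu_n'^y$ are conditional probability measures on the fibres $T_{1,n}^{-1}(y)$. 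Such disintegrations exist because $X$ is separable and universally measurable (this is exactly the role of the standing assumption on $X$ in Section~\ref{sec:nnue}). Then set
\[
    \tmu_n = \int_X \mu_n^y \otimes \mu_n'^y \, dm(y),
\]
which is a probability measure on $X \times X$ with marginals $\mu_n$, $\mu'_n$ and is supported on $\{T_{1,n}(x) = T_{1,n}(x')\} \subset \{\Theta \leq n\}$.

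Finally, define
\[
    \tmu = \sum_{n \geq 1} \alpha_n \tmu_n.
\]
The marginals of $\tmu$ are $\sum_n \alpha_n \mu_n = \mu$ and $\sum_n \alpha_n \mu'_n = \mu'$. For $j < n$ we have $\tmu_j(\Theta \geq n) = 0$ by construction, so
\[
    \tmu(\Theta \geq n)
    = \sum_{j \geq n} \alpha_j \tmu_j(\Theta \geq n)
    \leq \sum_{j \geq n} \alpha_j,
\]
which is the desired bound.

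The only delicate point is the measure-theoretic step: justifying the disintegrations $\mu_n = \int \mu_n^y \, dm(y)$ and $\mu'_n = \int \mu_n'^y \, dm(y)$, and verifying that the integrated product $\int \mu_n^y \otimes \mu_n'^y \, dm(y)$ is itself a Borel probability measure on $X \times X$ whose marginals are exactly $\mu_n$ and $\mu'_n$. This is where the assumption that $(X,d)$ is separable and universally measurable is used; once disintegration is available the remainder of the argument is a routine bookkeeping of the coupling.
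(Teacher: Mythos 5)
Your proposal is correct and follows essentially the same route as the paper: decompose both measures with the same $\alpha_n$, couple $\mu_n$ and $\mu'_n$ on the set $\{T_{1,n}(x)=T_{1,n}(x')\}\subset\{\Theta\leq n\}$, and sum. The only difference is in how the fibrewise coupling is justified: the paper pushes $\mu_n$ and $\mu'_n$ forward under $x\mapsto(x,T_{1,n}(x))$ and then invokes Shortt~\cite[Lemma~7]{S84} (a gluing lemma for couplings on universally measurable spaces, included in the framework precisely so that one need not argue disintegration directly), whereas you prove the gluing inline by disintegrating along the fibres of $T_{1,n}$ and taking the fibrewise product $\int \mu_n^y\otimes\mu_n'^y\,dm(y)$. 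These produce the same coupling; your version makes the construction more explicit but shoulders the burden of establishing a measurable disintegration on a universally measurable separable metric space, which is exactly the piece Shortt's lemma packages away. You correctly flag this as the delicate step, and it is the one place where citing the relevant measure-theoretic result (or Shortt directly) would be needed to complete the argument.
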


\begin{proof}
    By Theorem~\ref{thm:decdec}, we have the decompositions
    $\mu = \sum_{n \geq 1} \alpha_n \mu_n$
    and $\mu' = \sum_{n \geq 1} \alpha_n \mu'_n$. Fix $n$.
    
    Let $F_n \colon X \to X \times X$, $F_n(x) = (x, T_{1,n}(x))$, and let
    $\nu = (F_n)_* \mu_n$. Then the marginals of $\nu$ are $\mu_n$ and $m$,
    and $T_{1,n}(x_1) = x_2$ for $\nu$-almost every $(x_1, x_2)$.
    Let $\nu' = (F_n)_* \mu'_n$.

    Since the marginals of $\nu$ and $\nu'$ on the second coordinate agree,
    by Shortt~\cite[Lemma~7]{S84} there exists a measure
    $\rho$ on $X \times X \times X$ with respective marginals
    $\mu_n$, $\mu'_n$ and $m$, such that $T_{1,n}(x_1) = T_{1,n}(x_2) = x_3$
    for $\rho$-almost every $(x_1, x_2, x_3)$.
    
    Let $\tmu_n$ be the marginal of $\rho$ on the first
    two coordinates. Then the marginals of $\tmu_n$ are $\mu_n$ and $\mu'_n$,
    and $\tmu_n(\Theta \leq n) = 1$.
    Now, $\tmu = \sum_{n \geq 1} \alpha_n \tmu_n$ is the required measure.
\end{proof}

\subsection{Moment bounds}

For a random variable $X$ we denote the strong and weak $L^p$ norms by
\begin{equation}
    \label{eq:Lp-norms}
    \|X\|_p
    = \bigl(\bE |X|^p \bigr)^{1/p}
    ,
    \qquad
    \|X\|_{p, \infty}
    = \bigl( \sup_{t > 0} t^p \bP(|X| > t) \bigr)^{1/p}
    .
\end{equation}
We note that $\|X\|_{p, \infty}$ is not, strictly speaking, a norm, but for $p > 1$
it is equivalent to the respective \emph{Lorentz norm,} which is indeed a
norm, see Stein and Weiss~\cite[Section~V.3]{SW71}.

We say that $H \colon X^\bN \to \bR$ is a separately Lipschitz function if
$\Lip_n(H) < \infty$ for each $n$, where
\[
    \Lip_n(H)
    = \sup_{\{x_k\}, x'_n} \frac{\bigl|
        H(x_0, \ldots, x_{n-1}, x_n, x_{n+1}, \ldots)
        -
        H(x_0, \ldots, x_{n-1}, x_n', x_{n+1}, \ldots)
    \bigr|}{d(x_n, x_n')}
    .
\]
Given a sequence of maps $T_1, T_2, \ldots \in \cT$,
slightly abusing notation where convenient, we use $H$ as a function of
a single variable:
\[
    H (x) = H(x, T_{1,1}(x), T_{1,2}(x), \ldots)
    .
\]
One example of a separately Lipschitz function is a Birkhoff sum
$H(x) = \sum_{k<n} v_k (T_{1,k}(x))$, as long as the observables
$v_k$ are Lipschitz. Another example is the running maximum
$H(x) = \max_{j\leq n} \bigl| \sum_{k<j} v_k (T_{1,k}(x)) \bigr|$.

The abstract version of Theorem~\ref{thm:main:moments} is:

\begin{thm}
    \label{thm:mom}
    Let $T_1, T_2, \ldots \in \cT$.
    Suppose that $h(n) = C_\beta n^{-\beta}$, where $C_\beta > 0$ and $\beta > 1$.
    Let $\mu$ be a regular probability measure on $X$ with tail bound $C_\beta n^{-\beta + 1}$.
    Let $H \colon X^\bN \to \bR$ be separately Lipschitz, continuous with respect to the product
    topology on $X^\bN$ and satisfying $\int H \, d\mu = 0$.
    Then on the probability space $(X, \mu)$:
    \begin{enumerate}[label=(\alph*)]
        \item If $\beta \in (1,2)$, then
            $ \displaystyle
                \| H \|_{\beta, \infty}
                \leq C \Bigl( \sum_{n \geq 0} \Lip_n(H)^\beta \Bigr)^{1/\beta}
                .
            $
        \item If $\beta = 2$, then
            $ \displaystyle
                \| H \|_2
                \leq C \Bigl( \sum_{n \geq 0} \Lip_n(H)^2 \bigl(1 + \log (n + 1)\bigr) \Bigr)^{1/2}
                .
            $
            In addition, for $p > 2$,
            \begin{align*}
                \| H \|_p
                & \leq C_p \Bigl( \sum_{n \geq 0} \Lip_n(H)^2 \bigl(1 + \log (n + 1)\bigr) \Bigr)^{1/2}
                \\
                & + C_p \Bigl( \sum_{n \geq 0} \Lip_n(H)^2 \Bigr)^{1/p}
                \Bigl( \sum_{n \geq 0} \Lip_n(H) \Bigr)^{1 - 2/p}
                .
            \end{align*}
        \item If $\beta > 2$, then
            $ \displaystyle
                \| H \|_{2(\beta - 1)}
                \leq C \Bigl( \sum_{n \geq 0} \Lip_n(H)^2 \Bigr)^{1/2}
                .
            $
    \end{enumerate}
    Here $C$ denotes constants which depend only on
    $C_\beta$, $\beta$, $K_2$, $K_1$ and $K$, $\diam X$, $\lambda$, $n_0$, $\delta_0$, 
    and $C_p$ depends also on $p$.
\end{thm}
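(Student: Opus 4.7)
My plan is a reverse-martingale decomposition driven by Theorem~\ref{thm:decdec}, combined with a Burkholder--Rosenthal inequality and, in case~(a), a truncation argument. On $(X,\mu)$, consider the decreasing filtration $\cF_n = T_{1,n}^{-1}(\cB)$ (with $\cB$ the Borel $\sigma$-algebra of $X$), and write
\[
    H - \int H \, d\mu \;=\; \sum_{n \geq 0} D_n, \qquad D_n = \bE_\mu[H \mid \cF_n] - \bE_\mu[H \mid \cF_{n+1}],
\]
a reverse-martingale-difference series; convergence in $L^2$ uses triviality of $\cF_\infty$, a consequence of~\ref{ass:mixing} and Corollary~\ref{cor:decdec}. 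Since $\cF_n = T_{1,n}^{-1}(\cB)$, one has $\bE_\mu[H\mid\cF_n] = g_n \circ T_{1,n}$ with $g_n(y) = \bE_\mu[H\mid x_n = y]$, so $D_n(x) = g_n(y) - g_{n+1}(T_{n+1}y)$ for $y = T_{1,n}(x)$.

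The core technical step will be an estimate of the form
\[
    \|D_n\|_\infty \;\leq\; C\,\Lip_n(H) \;+\; C \sum_{k < n} \Lip_k(H)\,\psi(n-k),
\]
where $\psi(m) \lesssim m^{-(\beta-1)}$ is the memory-loss rate given by Theorem~\ref{thm:decdec} applied to the tail $r(n) = C_\beta n^{-\beta+1}$. To prove it, I would decompose $g_{n+1}(T_{n+1}y) = \sum_{y' \in T_{n+1}^{-1}(T_{n+1}y)} w_{y'}\, g_n(y')$ via the standard fiber decomposition; then $D_n$ is a weighted average of $g_n(y) - g_n(y')$, and each such difference compares two conditional path distributions that agree at all times $\geq n+1$, differ at time $n$ (contributing $\Lip_n(H)\,\diam X$), and differ in the distribution of the past. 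The past comparison is handled via the coupling from Corollary~\ref{cor:decdec} applied to the conditional measures of $\mu$ on the fibers $T_{1,n}^{-1}(y)$ and $T_{1,n}^{-1}(y')$, which are uniformly regular thanks to the Gibbs--Markov structure of the first-return map and its log-Lipschitz Jacobian.

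With this bound in hand, Burkholder's inequality for reverse martingales yields $\|H\|_p \leq C_p \|(\sum_n D_n^2)^{1/2}\|_p$ for $p \geq 2$, which we then control by convolution estimates on $\Lip_*(H) * \psi$. For $\beta > 2$, $\psi$ is $\ell^1$-summable and Young's inequality closes case~(c) with $p = 2(\beta-1)$. For $\beta = 2$, $\psi(m) \sim 1/m$ is not summable but a Cauchy--Schwarz argument produces the $1 + \log(n+1)$ factor of case~(b); a Rosenthal-type tail term handles $p > 2$. For $\beta \in (1,2)$ in case~(a), $\psi$ is not summable in any useful $\ell^q$ and the $L^p$ route fails; instead I would obtain the weak-type bound $\|H\|_{\beta,\infty}$ by truncating each $D_n$ at level $t$, controlling the bounded part via Burkholder in $L^2$ and the tail by a direct union-bound estimate.

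The main obstacle will be the $\|D_n\|_\infty$ estimate: one must first show that the fiberwise conditional measures of $\mu$ on $T_{1,n}^{-1}(y)$ are regular uniformly in $y$ (so that Theorem~\ref{thm:decdec} applies to them), by propagating the log-Lipschitz density bounds through the Gibbs--Markov fiber decomposition, and then convert the forward memory loss of Theorem~\ref{thm:decdec} into control on the backward discrepancy between pasts conditioned on two different endpoints $y$ and $y'$. The truncation step for case~(a) is also delicate because of the borderline summability of the memory-loss kernel; balancing the truncation level against the weak-$L^\beta$ target will require care.
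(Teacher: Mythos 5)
Your reverse-martingale setup with $\cF_n = T_{1,n}^{-1}(\cB)$ is a standard device and the overall scheme (Burkholder plus a square-function estimate) is in the right spirit, but the core technical estimate you propose,
\[
    \|D_n\|_\infty \;\leq\; C\,\Lip_n(H) \;+\; C\sum_{k<n}\Lip_k(H)\,\psi(n-k),
    \qquad \psi(m) \lesssim m^{-(\beta-1)},
\]
is false for intermittent maps, and this is a genuine gap, not a missing detail. To see the failure, take a single stationary LSV map $T$ with $\gamma = 1/\beta$, invariant density $\rho(x) \sim x^{-\gamma}$, and $\Lip_k(H) \equiv 1$ (say $H$ is the recentered $\sum_{k<N}1_{\{\text{path}(k) \in (0,1/2]\}}$). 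Fix $n$ and consider the event $\{x_n = y_2\}$ with $y_2 \in (1/2, 1/2 + \eps]$. Then $z = T(y_2) = 2y_2 - 1$ is small and its two $T$-preimages are $y_2$ and $y_1 \approx z$. Since $\rho(y_1) \sim z^{-\gamma}$, the fiber weight on the deep preimage satisfies $w_1 \to 1$ as $\eps \to 0$, so on this event $D_n = w_1\bigl(g_n(y_2) - g_n(y_1)\bigr) \approx g_n(y_2) - g_n(y_1)$. Conditioned on $x_n = y_1$ the trajectory has been in $(0,1/2]$ for $\sim L = z^{-\gamma}$ steps, whereas conditioned on $x_n = y_2 \in Y$ it has just entered; the two conditional pasts differ in about $L$ coordinates, so $|g_n(y_2) - g_n(y_1)| \gtrsim L$. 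Letting $\eps \to 0$ (so $L \nearrow n$) on a set of positive $\mu$-measure gives $\|D_n\|_\infty \gtrsim n$, whereas your bound predicts $O(n^{2-\beta}) = o(n)$ for every $\beta > 1$. This is not a repairable estimate: the increments in \emph{any} reasonable martingale decomposition for these systems have heavy polynomial tails governed by the return-time distribution, and it is exactly this heavy-tailed behavior that produces the weak-$L^\beta$ bound in (a) and the exponent $2(\beta-1)$ in (c). A uniform $\|D_n\|_\infty$ bound of convolution type would yield sub-Gaussian concentration, which is known to be false here.

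The paper handles this by conditioning in the opposite direction: it uses the increasing filtration $\cB_n$ generated by the symbolic itinerary (the partition elements visited up to time $n$), not the decreasing future filtration. With this choice the Doob martingale $\tH_n = \bE(H \mid \cB_n)$ has increments $\tX_n$ that vanish except at return times $r_k$; at a return one learns the excursion length, and $|\tX_{r_k}|$ is bounded in Proposition~\ref{prop:tX} explicitly in terms of $\tau_k$ and the tail of $\Lip_j(H)$, via the backward contraction of Proposition~\ref{prop:back-cont} and the coupling of Lemma~\ref{lem:step-coupling}. The quadratic variation is then a renewal-type functional whose moments are controlled in Section~\ref{sec:mart} by Burkholder--Rosenthal together with Lemmas~\ref{lem:fun},~\ref{lem:fun2} and~\ref{lem:4.5}. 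One could in principle salvage your route by replacing the uniform bound with a distributional one that tracks the conditional return time, but at that point you are reproducing the paper's argument in less transparent coordinates.
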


Theorem~\ref{thm:mom} is proved in Sections~\ref{sec:mom:new} and~\ref{sec:mart}.
In the rest of this section  we show that Theorems~\ref{thm:main:memory} and~\ref{thm:main:moments}
fit our framework and follow from Theorems~\ref{thm:decdec} and~\ref{thm:mom}.

\subsection{Proof of Theorem~\ref{thm:main:memory}}

Fix $\gamma^* \in (0,1)$ and let $\cT$ be the family of intermittent maps~\eqref{eq:LSV}
with parameters in $(0, \gamma^*]$. Let $X = [0,1]$ and $Y = (1/2,1]$; let $m$ be the Lebesgue measure
on $Y$ normalized to probability and let $m_X$ be the Lebesgue measure on $X$.
Let $\beta = 1/ \gamma^*$. We use $C$ to denote various constants which depend only on $\gamma^*$.

Proposition~\ref{prop:LSV:NUE} verifies that $\cT$ satisfies the assumptions of
Section~\ref{sec:results} with the bound on return times $h(n) = C n^{-\beta}$.

\begin{prop}
    \label{prop:LSV:NUE}
    For each sequence $T_1, T_2, \ldots \in \cT$, there exists
    a partition $\cP_Y$ of $Y$ into intervals $(y_{n+1}, y_n]$ with $y_1 = 1$, $y_2 = 3/4$,
    $1/2 < y_{n+1} < y_n \leq 1/2 + C n^{-1/\gamma^*}$ and $y_n - y_{n+1} \leq y_{n+1} - 1/2$ for all $n$,
    such that $\tau \colon Y \to \{1,2,\ldots\}$,
    $\tau(y) = n$ if $y \in (y_{n+1}, y_n]$, is the first return time to $Y$.
    Further, each restriction $T_{1,n} \colon (y_{n+1}, y_n] \to Y$ is a bijection
    with bounded distortion:
    \[
        | \log T_{1,n}'(y) - \log T_{1,n}'(y') |
        \leq C |T_{1,n}(y) - T_{1,n}(y')|
        \quad \text{for all} \quad
        y,y' \in (y_{n+1}, y_n]
        .
    \]
\end{prop}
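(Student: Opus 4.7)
The plan is to build the partition $\cP_Y$ by pulling the point $1/2$ back through the left branches of $T_2, T_3, \ldots$, verify the first-return and bijection properties by monotonicity, obtain the tail estimate and size-comparison inequality by comparing the nonstationary laminar dynamics with the stationary LSV map of parameter $\gamma^*$, and establish bounded distortion via a chain-rule calculation paralleling the classical stationary argument.

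Let $L_k$ denote the left branch of $T_k$, a monotone bijection $(0, 1/2] \to (0, 1]$. Set $\alpha_0 = 1$, $\alpha_1 = 1/2$, and for $n \geq 2$ let $\alpha_n \in (0, 1/2]$ be the unique point with $(L_n \circ L_{n-1} \circ \cdots \circ L_2)(\alpha_n) = 1/2$. Define $y_n = (\alpha_{n-1} + 1)/2$, so $y_1 = 1$, $y_2 = 3/4$, the sequence $y_n$ is strictly decreasing with limit $1/2$, and for $y \in (y_{n+1}, y_n]$ we have $T_1(y) = 2y - 1 \in (\alpha_n, \alpha_{n-1}]$. Since $T_j(x) \geq x$ on $(0, 1/2]$, the forward orbit under $T_2, \ldots, T_{n-1}$ is monotone and stays in $(0, 1/2]$, reaching $1/2$ exactly at step $n - 1$ for the endpoint $\alpha_{n-1}$; hence $\tau \equiv n$ on $(y_{n+1}, y_n]$ and $T_{1,n}$ is a bijection onto $Y$.

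For the tail bound I use the inequality
\[
T_k(x) - x = (2x)^{\gamma_k}\, x \geq (2x)^{\gamma^*}\, x, \qquad x \in (0, 1/2],\ \gamma_k \leq \gamma^*,
\]
which shows every step of the nonstationary laminar orbit is at least as long as the corresponding step of the stationary $\gamma^*$ orbit. Since the nonstationary orbit from $\alpha_n$ escapes $(0, 1/2]$ in $n - 1$ steps, the stationary $\gamma^*$ orbit from $\alpha_n$ takes at least $n - 1$ steps, and the classical LSV escape-time asymptotic $N(x) \asymp x^{-\gamma^*}$ yields $\alpha_n \leq C n^{-1/\gamma^*}$, whence $y_n - 1/2 = \alpha_{n-1}/2 \leq C n^{-1/\gamma^*}$. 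For the size-comparison $y_n - y_{n+1} \leq y_{n+1} - 1/2$, equivalent to $\alpha_{n-1} \leq 2 \alpha_n$, the elementary identity
\[
L_k(2x) - 2 L_k(x) = 2^{1 + \gamma_k}(2^{\gamma_k} - 1)\, x^{1 + \gamma_k} \geq 0
\]
gives the halving property $L_k(x/2) \leq L_k(x)/2$ on $(0, 1/2]$. Iterating with monotonicity, $T_{2, n-1}(\alpha_{n-1}/2) \leq T_{2, n-1}(\alpha_{n-1})/2 = 1/4$, while $T_{2, n-1}(\alpha_n) = L_n^{-1}(1/2) \geq 1/4$ because $L_n(1/4) = (1/4)(1 + 2^{-\gamma_n}) \leq 1/2$; monotonicity of $T_{2, n-1}$ then delivers $\alpha_n \geq \alpha_{n-1}/2$.

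For the distortion bound the chain rule, together with $T_1' \equiv 2$ on the right branch, yields
\[
\bigl|\log T_{1,n}'(y) - \log T_{1,n}'(y')\bigr| \leq \sum_{k=2}^{n} C\, x_{k-1}^{\gamma_k - 1}\,\bigl|T_{1, k-1}(y) - T_{1, k-1}(y')\bigr|,
\]
where $x_{k-1}$ denotes the smaller of the two orbit points and the estimate on $|(\log T_k')'|$ comes from $\log T_k'(x) = \log(1 + (\gamma_k + 1) 2^{\gamma_k} x^{\gamma_k})$. I expect controlling this sum by $|T_{1, n}(y) - T_{1, n}(y')|$ to be the main obstacle. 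The strategy is to pull the differences $|T_{1, k-1}(y) - T_{1, k-1}(y')|$ back to the target via the product of $T_j'$ along the remaining orbit segment and to combine this with the lower bound $x_{k-1} \gtrsim (n - k + 1)^{-1/\gamma^*}$ that holds uniformly on the $n$-cylinder as a consequence of the tail estimate, reducing the sum to a geometric-like series. This mirrors the classical stationary LSV bounded-distortion proof, with the ratio inequality $\alpha_{n-1} \leq 2 \alpha_n$ supplying the uniform geometric control needed for the recursion.
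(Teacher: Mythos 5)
Your construction of the partition via the preimages $\alpha_n$, the verification that $\tau \equiv n$ on each cylinder by monotonicity of the left branches, the tail bound $\alpha_n \leq C n^{-1/\gamma^*}$ via pointwise comparison with the stationary $\gamma^*$ orbit, and the halving inequality $\alpha_{n-1} \leq 2\alpha_n$ from $L_k(x/2) \leq L_k(x)/2$ are all correct. The last item in particular is a nice self-contained argument for a fact the paper simply cites (\cite[Equation~(4)]{L17}), and the tail-bound comparison is clean.

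The distortion bound, however, is not proved, and the strategy you sketch has a genuine flaw. To bound the sum $\sum_k x_{k-1}^{\gamma_k-1}\,|T_{1,k-1}(y)-T_{1,k-1}(y')|$ you invoke ``the lower bound $x_{k-1}\gtrsim (n-k+1)^{-1/\gamma^*}$ that holds uniformly on the $n$-cylinder as a consequence of the tail estimate.'' This is the wrong direction: the tail estimate gives $\alpha_j \leq C j^{-1/\gamma^*}$, i.e.\ an \emph{upper} bound on orbit points, whereas $\gamma_k-1<0$ forces you to need a \emph{lower} bound. No such lower bound holds uniformly over $\cT$: if the $\gamma_j$ with $j\leq k$ are close to $0$, the laminar orbit escapes far faster than the stationary $\gamma^*$ orbit, and $\alpha_{n-k}$ can be much smaller than $(n-k)^{-1/\gamma^*}$. (The factor $\gamma_k$ in $(\log T_k')'\asymp \gamma_k x^{\gamma_k-1}$, which you dropped, degenerates in the same regime and would have to be carried along to have any hope of cancellation; your sketch does not do this.) This is exactly the difficulty that the Koebe principle sidesteps: the paper's one-line proof invokes \cite[Lemma~4.8]{BBD14}, which delivers uniform distortion for an arbitrary composition of full branches by a cross-ratio argument, with no quantitative lower bound on the orbit required. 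As written, your proof establishes everything in the statement except the distortion inequality, and for that part the proposed route would need to be replaced (e.g.\ by a Koebe-type argument) rather than merely filled in.
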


\begin{proof}
    Distortion bound is easily obtained from the Koebe principle, see e.g.\ \cite[Lemma 4.8]{BBD14}.
    For the bound $y_n - y_{n+1} \leq y_{n+1} - 1/2$ see~\cite[Equation~(4)]{L17}.
\end{proof}

There is a similar partition of $(0, 1/2]$:

\begin{prop}
    \label{prop:LSV:NUE:X}
    For each sequence $T_1, T_2, \ldots \in \cT$, there exists
    a partition $\cP_X$ of $(0, 1/2]$ into intervals $(x_{n+1}, x_n]$ with $x_1 = 1/2$,
    $x_{n+1} < x_n \leq C n^{-1/\gamma^*}$ and $x_n - x_{n+1} \leq x_{n+1}$ for all $n$,
    such that $\tau \colon (1/2,1] \to \{1,2,\ldots\}$,
    $\tau(x) = n$ if $x \in (x_{n+1}, x_n]$, is the first entry time to $Y$.
    Further, the restriction $T_{1,n} \colon (x_{n+1}, x_n] \to Y$ is a bijection
    with bounded distortion:
    \[
        | \log T_{1,n}'(x) - \log T_{1,n}'(x') |
        \leq C |T_{1,n}(x) - T_{1,n}(x')|
        \quad \text{for all} \quad
        x,x' \in (x_{n+1}, x_n]
        .
    \]
\end{prop}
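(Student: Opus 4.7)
The plan is to parallel the proof of Proposition~\ref{prop:LSV:NUE}, replacing backward orbits in $Y$ with backward orbits along the intermittent branch. On $(0, 1/2]$ each $T_i(x) = x + 2^{\gamma_i} x^{1 + \gamma_i}$ is a smooth increasing bijection onto $(0, 1]$. I would set $x_1 = 1/2$ and inductively define $x_{n+1}$ as the unique point in $(0, 1/2]$ with $T_{1, n}(x_{n+1}) = 1/2$ and $T_{1, k}(x_{n+1}) \in (0, 1/2]$ for $0 \leq k < n$; equivalently, $x_{n+1} = T_1^{-1}(x'_n)$, where $x'_n$ is the corresponding point for the shifted sequence $T_2, T_3, \ldots$. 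This construction makes $T_{1, n} \colon (x_{n+1}, x_n] \to Y$ a smooth bijection by design.

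For the upper bound $x_n \leq C n^{-1/\gamma^*}$, I would compare with the stationary case. The identity $\tfrac{d}{d\gamma}(2^\gamma z^{1 + \gamma}) = 2^\gamma z^{1 + \gamma} \log(2z)$ shows that $z \mapsto 2^\gamma z^{1+\gamma}$ is decreasing in $\gamma$ on $(0, 1/2)$, so $T_\gamma(z) \geq T_{\gamma^*}(z)$ for every $\gamma \leq \gamma^*$; monotonicity of the first branch then yields $T_\gamma^{-1}(y) \leq T_{\gamma^*}^{-1}(y)$. Iterating gives $x_n \leq x_n^*$, where $x_n^*$ is the partition point for the stationary map $T_{\gamma^*}$. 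The classical asymptotic $x_n^* \sim (\gamma^* 2^{\gamma^*} n)^{-1/\gamma^*}$ is standard and can be read off from the substitution $u_k = (x_k^*)^{-\gamma^*}$, which satisfies $u_{k+1} - u_k \to \gamma^* 2^{\gamma^*}$.

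For $x_n - x_{n+1} \leq x_{n+1}$, equivalent to $x_n \leq 2 x_{n+1}$, I would induct on $n$. The case $n = 1$ reduces to $x_2 \geq 1/4$, which holds since $T_1(1/4) = (1/4)(1 + 2^{-\gamma_1}) \leq 1/2$. For the inductive step, if $x_{n+1} > 1/4$ then $x_n \leq 1/2 < 2 x_{n+1}$ is trivial; otherwise $2 x_{n+1} \leq 1/2$ and the elementary identity $T_1(2z) - 2 T_1(z) = 2^{1 + \gamma_1}(2^{\gamma_1} - 1) z^{1 + \gamma_1} \geq 0$, combined with the inductive hypothesis $x'_{n-1} \leq 2 x'_n$ applied to the shifted sequence, gives $T_1(2 x_{n+1}) \geq 2 T_1(x_{n+1}) = 2 x'_n \geq x'_{n-1} = T_1(x_n)$, so $x_n \leq 2 x_{n+1}$ by monotonicity of $T_1$. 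The distortion bound is entirely standard: on each monotone branch $|T_i''/T_i'|$ is controlled by $C x^{\gamma_i - 1}$, and the usual Koebe/backward-bounded-distortion argument, exactly as in~\cite[Lemma 4.8]{BBD14}, yields the claim uniformly in the sequence of parameters.

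The main obstacle is the upper bound on $x_n$: the non-stationary setting precludes direct appeal to the stationary asymptotic, so the key observation is the monotonicity of $z \mapsto 2^\gamma z^{1+\gamma}$ in $\gamma$, which reduces the problem to the worst-case stationary bound. The size bound, though elementary, also requires a careful case split at the boundary $x_{n+1} \approx 1/4$.
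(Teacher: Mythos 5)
Your proof is essentially correct and gives a self-contained argument where the paper states Proposition~\ref{prop:LSV:NUE:X} without a written proof (for its twin Proposition~\ref{prop:LSV:NUE} the paper simply cites~\cite[Lemma~4.8]{BBD14} for distortion and~\cite[Equation~(4)]{L17} for the spacing bound). The two observations you exploit are both correct and are the natural route: the monotonicity $T_\gamma(z) \ge T_{\gamma^*}(z)$ on $(0,1/2]$ for $\gamma \le \gamma^*$ reduces the upper bound on $x_n$ to the worst-case stationary map, and the superadditivity $T_\gamma(2z) \ge 2T_\gamma(z)$ on the left branch, combined with the shift-invariant induction via $T_1(x_{n+1}) = x_n'$ for the shifted sequence, yields $x_n \le 2x_{n+1}$ with the correct base case and a correct case split at $x_{n+1} = 1/4$. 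The appeal to the Koebe principle~\cite[Lemma~4.8]{BBD14} for the distortion bound matches what the paper does for Proposition~\ref{prop:LSV:NUE}.

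One step should be tightened: a uniform bound $x_n^* \le C n^{-1/\gamma^*}$ (with $C$ depending only on $\gamma^*$, valid for all $n\ge 1$) does not follow from the stated asymptotic $u_{k+1}-u_k \to \gamma^* 2^{\gamma^*}$ alone. You need $u_{k+1}-u_k \ge c(\gamma^*) > 0$ for every $k$. This is easy: with $v := 2^{\gamma^*}(x^*_{k+1})^{\gamma^*} \in (0,1]$ one has $u_{k+1}-u_k = u_{k+1}\bigl(1-(1+v)^{-\gamma^*}\bigr)$, and since $1-(1+v)^{-\gamma^*} \ge c_{\gamma^*} v$ uniformly on $(0,1]$ this gives $u_{k+1}-u_k \ge c_{\gamma^*} 2^{\gamma^*}$, hence $u_n \ge c\,n$ and the claimed bound. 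Worth stating explicitly, since the proposition asserts a constant depending only on $\gamma^*$.
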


Let $\mu$ be a probability measure on $X$ with Lipschitz density.
Then for sufficiently large $c > 0$, $\tmu = (\mu + c m_X) / (1 + c)$
is a probability measure with log-Lipschitz density and,
by Proposition~\ref{prop:LSV:NUE:X}, $\tmu$ is regular with tail bound $C n^{-\beta}$.
If $\mu'$ is another such measure, then
\[
    |(T_{1,n})_* (\mu - \mu')| / (1+c)
    = |(T_{1,n})_* (\tmu - \tmu')|
    = O(n^{-\beta})
\]
by Theorem~\ref{thm:decdec} and Remark~\ref{rmk:decdec}.
Measures with H\"older densities can be treated in the same way by Remark~\ref{rmk:Lipschitz}.
This proves Theorem~\ref{thm:main:memory}\ref{thm:main:memory:unexpected}.

Even though Theorem~\ref{thm:main:memory}\ref{thm:main:memory:normal} is an easier result,
its proof requires additional work:

\begin{prop}
    \label{prop:invreg}
    Let $\mu$ be a probability measure on $X$ with density in the cone $\cC_*$.
    Let $T_1, T_2, \ldots \in \cT$.
    Then $\mu$ is regular with tail bound $C n^{-\beta + 1}$.
    (For a suitable choice of $K_1$ in the definition of regularity.)
\end{prop}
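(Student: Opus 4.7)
The plan is to identify the partition $\cP$ and then verify the tail bound and regularity separately. Given $T_1, T_2, \ldots \in \cT$, the partition $\cP$ is the union of $\cP_Y$ (Proposition~\ref{prop:LSV:NUE}) and $\cP_X$ (Proposition~\ref{prop:LSV:NUE:X}). Write $\rho = d\mu/dm_X \in \cC_*$; since $\int_0^1 \rho = 1$, the cone bound $\rho(x) \leq a x^{-\gamma^*}$ holds. For the tail bound, the set $\{x : T_{1,k}(x) \not\in Y \text{ for } 1 \leq k < n\}$ is contained in $(0, x_n] \cup (1/2, y_n]$. On $Y$ the density is bounded (since $\rho$ is decreasing and $\rho(1/2) \leq a\,2^{\gamma^*}$), giving $\mu((1/2, y_n]) = O(n^{-\beta})$; and on $(0, x_n]$ the cone bound gives $\mu((0, x_n]) \leq C x_n^{1-\gamma^*} = O(n^{-\beta+1})$. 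Summing yields the required tail bound.

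For regularity on atoms $a = (y_{n+1}, y_n] \subset Y$, the two monotonicity conditions defining $\cC_*$ ($\rho$ decreasing and $x^{\gamma^*+1}\rho(x)$ increasing) imply
\[
    |\log \rho(y) - \log \rho(y')| \leq (\gamma^*+1) |\log(y'/y)| \leq 2(\gamma^*+1)|y-y'|
\]
for $y, y' \in a$, using $y, y' > 1/2$. Combined with the distortion bound from Proposition~\ref{prop:LSV:NUE}, this shows that $(F_a)_*(\mu|_a)$ has log-Lipschitz density on $Y$ with constant depending only on $\gamma^*$.

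The main obstacle is the case $a = (x_{n+1}, x_n] \subset (0, 1/2]$: here $\rho$ can blow up like $x^{-\gamma^*}$ near the origin, so $\mu|_a$ is \emph{not} log-Lipschitz with any $n$-uniform constant, and the cancellation must come from the strong expansion of $T_{1,n}$ on $a$. For $x, x' \in a$, the cone yields $|\log \rho(x) - \log \rho(x')| \leq (\gamma^*+1) |x-x'|/x_{n+1}$, while bounded distortion and $T_{1,n}(a) = Y$ yield $|T_{1,n}(x) - T_{1,n}(x')| \geq c|x-x'|/(x_n - x_{n+1})$; the geometric constraint $x_n - x_{n+1} \leq x_{n+1}$ then converts this into $|x-x'|/x_{n+1} \leq C|T_{1,n}(x) - T_{1,n}(x')|$. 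Combining with the Lipschitz bound on $\log T_{1,n}'$ in Proposition~\ref{prop:LSV:NUE:X}, the pushforward $(T_{1,n})_*(\mu|_a)$ has log-Lipschitz density with constant depending only on $\gamma^*$. Finally, choose $K_1$ in the definition of regularity large enough to accommodate both cases, using the freedom granted by Proposition~\ref{prop:K}.
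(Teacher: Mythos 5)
Your proof is correct and follows essentially the same approach as the paper's: obtain the tail bound from the cone growth bound $\rho(x) \leq a x^{-\gamma^*}$ together with $x_n, y_n - 1/2 \lesssim n^{-1/\gamma^*}$, and establish regularity by combining the two monotonicity conditions of $\cC_*$ with the bounded distortion of Propositions~\ref{prop:LSV:NUE} and~\ref{prop:LSV:NUE:X} and the geometric inequality $x_n - x_{n+1} \leq x_{n+1}$. The only difference is cosmetic — you spell out the cell case $a \subset Y$ that the paper dismisses as "similar," which is a pleasant bit of completeness.
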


\begin{proof}
    Let $\rho \in \cC_*$ be the density of $\mu$.
    Let $x_n$ and $y_n$ be as in Propositions~\ref{prop:LSV:NUE} and~\ref{prop:LSV:NUE:X}.

    The tail bound follows from $\rho(x) \leq C x^{-\gamma^*}$ and $x_n, y_n - 1/2 \leq C n^{-1/\gamma^*}$:
    \begin{align*}
        \mu \bigl( \{ x \in X : T_{1,k} \not \in Y \text{ for all } 1 \leq k < n\} \bigr)
        & = \mu \bigl((0, x_n] \cup (1/2, y_n] \bigr)
        \\
        & \leq C x_n^{-\gamma^* + 1} + C(y_n - 1/2)
        \leq C n^{1 - \beta}
        .
    \end{align*}

    It remains to show that $\mu$ is regular. Write
    \begin{align*}
        A_n 
        & = \{x \in X : T_{1,n}(x) \in Y \text{ \ and \ }
        T_{1,k}(x) \not \in Y \text{ for all } 1 \leq k < n \}
        \\
        & =  I_n \cup J_n,
    \end{align*}
    where $I_n = (x_{n+1}, x_n]$ and $J_n = (y_{n+1}, y_n ]$.
    We show that $\bigl|(T_{1,n})_*(\mu |_{I_n}) \bigr|_{\LL} \le C$ for all $n \ge 1$.
    The proof on $J_n$ is similar, the two together yield
    $\bigl| (T_{1,n})_*(\mu |_{A_n}) \bigr|_{\LL} \le C$ as wanted.

    The measure $(T_{1,n})_*(\mu |_{I_n})$ has density 
    \[
        \frac{ d (T_{1,n})_*(\mu |_{I_n})}{dm_X} (z)
        = \frac{ \rho(z_n) }{ T_{1,n}'(z_n) },
        \qquad z \in Y, 
    \]
    where $z_n =  (T_{1,n} |_{I_n})^{-1} z$.
    Hence it is enough to show that for all $z, z' \in Y$, 
    \begin{align}\label{eq:loglip-1}
        \biggl| \log \frac{ T_{1,n}'(z_n)  }{  T_{1,n}'(z'_n) } \biggr|
        \le C | z - z'|
    \end{align}
    and
    \begin{align}\label{eq:loglip-2}
        \biggl| \log \frac{ \rho(z_n) }{ \rho(z_n') } \biggr|
        \le C | z - z'|.
    \end{align}

    Inequality \eqref{eq:loglip-1} holds by Proposition~\ref{prop:LSV:NUE:X}.
    To obtain \eqref{eq:loglip-2} we assume that $z > z'$.
    Since $\rho$ is decreasing,
    \[
        \biggl|  \log \frac{ \rho(z_n)  }{  \rho(z_n') }  \biggr|
        = \log \frac{ \rho(z_n') }{\rho(z_n)}.
    \]
    Since $x^{\gamma^* + 1} \rho(x)$ is increasing,
    \begin{equation}
        \label{eq:gg}
        \frac{ \rho(z_n') }{\rho(z_n)}
        = \frac{  ( z_n')^{\gamma^* + 1} \rho(z_n') }{  z^{\gamma^* + 1}  \rho(z_n)}
        \frac{ z_n^{\gamma^* + 1} }{ ( z_n')^{\gamma^* + 1}} 
        \le \frac{ z_n^{\gamma^* + 1} }{ ( z_n')^{\gamma^* + 1}}.
    \end{equation}
    By the distortion bound~\eqref{eq:loglip-1}, $T_{1,n}' \geq C (x_n - x_{n+1})^{-1}$ on $(x_{n+1}, x_n]$.
    Hence
    \begin{equation}
        \label{eq:ggg}
        z_n - z_n'
        \leq \inf_{x \in (x_{n+1}, x_n]} T_{1,n}'(x) \; (z - z')
        \leq C (x_n - x_{n+1}) (z - z')
        .
    \end{equation}
    By~\eqref{eq:gg} and~\eqref{eq:ggg}, and using $z_n \in (x_{n+1}, x_n]$,
    \[
        \log \frac{ \rho(z_n') }{\rho(z_n)}
        \leq C( \log z_n - \log z_n' )
        \leq x_{n+1}^{-1} (z_n - z_n')
        \leq C x_{n+1}^{-1} (x_n - x_{n+1}) (z - z')
        .
    \]
    Now to obtain \eqref{eq:loglip-2} it suffices to recall that
    $x_n - x_{n+1} \leq x_{n+1}$.
\end{proof}
Theorem~\ref{thm:main:memory}\ref{thm:main:memory:normal} follows from
Theorem~\ref{thm:decdec}, Theorem~\ref{thm:main:memory}\ref{thm:main:memory:unexpected}
and Proposition~\ref{prop:invreg}.

\subsection{Proof of Theorem~\ref{thm:main:moments}}

We showed above that the maps in $\cT$ satisfy assumptions of Theorem~\ref{thm:mom}
and that every probability measure with density in $\cC_*$ is regular
with tail bound $C n^{-1/\gamma^* + 1}$.
The bounds in Theorem~\ref{thm:main:moments} follow from those in
Theorem~\ref{thm:mom} and $|S_n| \leq n \sup_{k,x} |v_k(x)|$,
as in~\cite[Equations~(1.2),~(1.3)]{GM14}.


\section{Proof of Theorem~\ref{thm:decdec}}
\label{sec:mixing}

Our proof of Theorem~\ref{thm:decdec} is not very long but technical
and the idea is hard explain in simple terms.
(Which is, in a way, similar to Lindvall's extremely short but puzzling
proof for homogeneous Markov chains~\cite{Li79}.)
Informally, the main steps are:
\begin{enumerate}
    \item \emph{Recurrence to $Y$, Proposition~\ref{prop:onedec}.}
        Given a ``nice'' probability measure $\mu$ on $Y$, we show
        that for all sufficiently large $n$,
        \[
            (T_{1,n})_* \mu = \theta m + (1-\theta) \mu'
            ,
        \]
        where $\theta \in (0,1)$ is independent of $\mu$ and $n$, and
        $\mu'$ has tail bound $h_n \sim \sum_{j=0}^n h(j + \ell)$.
        The tail bounds $h_n$ grow with $n$, but are just enough for the rest of our proof
        to work.
    \item \emph{Returns to $Y$ for regular measures, Lemma~\ref{lem:mixdec}.}
        We expand the previous step to show that for a regular measure $\mu$ on $X$,
        \[
            \mu = \sum_{j=1}^{\infty} \alpha_j [\theta \mu'_j + (1-\theta) \mu_j ]
            ,
        \]
        where the constants $\alpha_j \in [0,1]$ are fully and explicitly determined
        by the tail of $\mu$, and $\mu_j$, $\mu'_j$ are probability
        measures with $(T_{1,j})_* \mu_j$ regular with tail bound $h_j$
        and $(T_{1,j})_* \mu'_j = m$.
    \item \emph{Reduction to a probabilistic problem, Lemma~\ref{lem:proproprobab}.}
        Iterating the previous step (successively applying it to measures $\mu_j$
        and to the results of their decompositions), we decompose a general regular measure
        $\mu$ with tail $r$ into
        \[
            \mu = \sum_{n=1}^\infty \bP(S = n) \mu_n
            ,
        \]
        where $(T_{1,n})_* \mu_n = m$ and $S$ is a random variable, constructed on
        an unrelated probability space as $S = X_1 + \ldots + X_\tau$.
        Here $\tau$ is a geometric random variable with parameter $\theta$
        and $X_n$ are (independent from $\tau$) random variables with explicitly controlled tails:
        \begin{align*}
            \bP(X_1 \geq \ell) 
            & \sim r(\ell - n_0)
            \\
            \bP(X_j \geq \ell \mid X_1, \ldots, X_{j-1}) 
            & \sim h_{X_{j-1}}(\ell - n_0)
            \quad \text{ for } j \geq 2
            .
        \end{align*}
    \item \emph{Tail estimates, Propositions~\ref{prop:Stail},~\ref{prop:Stail-exp}.}
        Finally we estimate $\bP(S \geq n)$ for specific bounds on $r$ and $h$.
\end{enumerate}

We begin the proof with a simple yet important observation:

\begin{rmk}
    \label{rmk:on-Y}
    Using the inequality
    $(a+b) / (a' + b') \leq \max \{a/a', b/b'\}$, which holds for
    $a,a',b,b' > 0$, from the definition of $|\cdot|_{\LL}$
    we deduce that if $\mu$ and $\mu'$ are nonnegative measures on $Y$, then
    \[
        |\mu + \mu'|_{\LL} 
        \leq \max \{ |\mu|_{\LL}, |\mu'|_{\LL} \}
        ,
    \]
    whenever the above is well defined.
    This inequality extends to finite and countable sums:
    $|\sum_k \mu_k |_{\LL} \leq \sup_k |\mu_k|_{\LL}$.
    As a corollary, if $\mu$ is a measure on $Y$ with $|\mu|_{\LL} \leq K_2$, or
    more generally a regular measure on $X$, then for each $n \geq 1$,
    \[
        \Bigl| \bigl( (T_{1,n})_* \mu \bigr) \big|_Y \Bigr|_{\LL}
        \leq K_1
        .
    \]
\end{rmk}

\begin{prop}
    \label{prop:onedec}
    Suppose that $\mu$ is a probability measure on $Y$ with $|\mu|_{\LL} \leq K_2$.
    Let $T_1, T_2, \ldots \in \cT$.
    Let $h_n (\ell) = C_h \sum_{j=0}^n h(j + \ell)$, where $C_h = 2 e^{K_2 \diam Y}$.
    Then:
    \begin{enumerate}[label=(\alph*)]
        \item\label{prop:onedec:tail}
            For every $n \geq 0$ the tail of $(T_{1,n})_* \mu$ is bounded by $\frac{1}{2} h_n(\ell)$.
        \item\label{prop:onedec:dec}
            There is a constant $\theta \in (0,1)$, depending only on $K_1$, $K_2$, $\diam Y$ and $\delta_0$,
            such that for every $n \geq n_0$,
            \[
                (T_{1,n})_* \mu
                = \theta m + (1-\theta) \mu'
                ,
            \]
            where $\mu'$ is a regular probability measure.
            The tail of $\mu'$ is bounded by
            $h_n (\ell) = C_h \sum_{j=0}^n h(j + \ell)$, where $C_h = 2 e^{K_2 \diam Y}$.
    \end{enumerate}
\end{prop}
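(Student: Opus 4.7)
The plan is: in part (a) a last-return decomposition of $\mu$; in part (b) extract a pointwise lower bound on the $Y$-density of $(T_{1,n})_*\mu$, choose $\theta$ so that the residual density stays log-Lipschitz with constant at most $K_2$, and then verify regularity of the residual measure by a case analysis.

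\textbf{Part (a).} I would partition $Y$ by $R(x):=\max\{j\in\{0,\ldots,n\}:T_{1,j}(x)\in Y\}$, which is well-defined because $\mu$ is supported on $Y$. On the atom $\{R=j\}$, the event ``$T_{1,n+k}(x)\notin Y$ for $1\leq k<\ell$'' reduces to ``$T_{1,j}(x)\in Y$ and $\tau(T_{1,j}(x))\geq n+\ell-j$'', whose $\mu$-measure equals $\nu_j(\{\tau\geq n+\ell-j\})$ with $\nu_j:=((T_{1,j})_*\mu)|_Y$. By Remark~\ref{rmk:on-Y}, $|\nu_j|_{\LL}\leq K_1$ for $j\geq 1$ and $|\nu_0|_{\LL}\leq K_2$; since $\nu_j(Y)\leq 1$, $m(Y)=1$, and $K_1<K_2$ (as chosen after Proposition~\ref{prop:K}), in every case $d\nu_j/dm\leq e^{K_2\diam Y}$. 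Hence $\nu_j(\tau\geq n+\ell-j)\leq e^{K_2\diam Y}h(n+\ell-j)$, and summing telescopes into $e^{K_2\diam Y}\sum_{i=\ell}^{n+\ell}h(i)\leq\tfrac12 h_n(\ell)$.

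\textbf{Part (b).} Let $\rho_n:=d((T_{1,n})_*\mu|_Y)/dm$. From $|\mu|_{\LL}\leq K_2$ and $m(Y)=1$ one also gets $d\mu/dm\geq e^{-K_2\diam Y}$ on $Y$, so by \ref{ass:mixing}, $((T_{1,n})_*\mu)(Y)\geq e^{-K_2\diam Y}\delta_0$ for $n\geq n_0$; combined with $|\log\rho_n|_{\Lip}\leq K_1$ (Remark~\ref{rmk:on-Y}) this gives $\rho_n\geq\theta_0:=e^{-(K_1+K_2)\diam Y}\delta_0$. Since $K_1<K_2$, I fix $C\geq K_2/(K_2-K_1)$ with $K_1 C/(C-1)\leq K_2$ and set $\theta:=\min\{\theta_0/C,\,\tfrac12\}$. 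Then $\rho_n-\theta\geq(C-1)\theta\geq 0$ and the pointwise bound $\rho_n/(\rho_n-\theta)\leq C/(C-1)$ yields $|\log(\rho_n-\theta)|_{\Lip}\leq K_1 C/(C-1)\leq K_2$. Put $\mu':=(1-\theta)^{-1}((T_{1,n})_*\mu-\theta m)$; it is a probability measure, and from $\mu'\leq(1-\theta)^{-1}(T_{1,n})_*\mu$ together with part (a), its tail is at most $(1-\theta)^{-1}\cdot\tfrac12 h_n(\ell)\leq h_n(\ell)$.

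It remains to verify $\mu'$ is regular. Fix any $S_1,S_2,\ldots\in\cT$ with partition $\cP'$ and first-return/entry time $\sigma$, and take $b\in\cP'$. If $b\subset Y$, then $\mu'|_Y$ has log-Lipschitz density bounded by $K_2$, so Proposition~\ref{prop:K} gives $|(S_{1,\sigma(b)})_*(\mu'|_b)|_{\LL}\leq K_1$. If $b\subset X\setminus Y$, I view
\[
(S_{1,\sigma(b)}\circ T_{1,n})_*\bigl(\mu|_{T_{1,n}^{-1}(b)\cap Y}\bigr)
\]
as the $(n+\sigma(b))$-step pushforward of $\mu$ along the concatenated sequence $\bfT:=(T_1,\ldots,T_n,S_1,S_2,\ldots)\in\cT^{\bN}$. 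By the definition of $\sigma(b)$, for every $x\in T_{1,n}^{-1}(b)\cap Y$ the $\bfT$-orbit stays outside $Y$ at steps $n+1,\ldots,n+\sigma(b)-1$ and lies in $Y$ at step $n+\sigma(b)$, so that instant is a genuine first-return time of the $\bfT$-orbit. Decomposing $T_{1,n}^{-1}(b)\cap Y$ by the full $\bfT$-itinerary of first-return cells and iterating Proposition~\ref{prop:K} (each iterate preserves the log-Lipschitz bound $K_1\leq K_2$) produces a pushforward with log-Lipschitz constant $\leq K_1$ on every atom, and the sub-additivity in Remark~\ref{rmk:on-Y} extends the bound to the sum. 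The main obstacle is precisely this last step: recognizing the composite as a $\bfT$-return-map iterate and chaining Proposition~\ref{prop:K} across a path-dependent refinement of $T_{1,n}^{-1}(b)\cap Y$.
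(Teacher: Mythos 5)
Your proof is correct and follows essentially the same route as the paper: part (a) is the same last-visit-to-$Y$ decomposition with the bound $\nu_j \leq e^{K_2 \diam Y} m$, and part (b) extracts $\theta m$ from the density of $((T_{1,n})_*\mu)|_Y$ exactly as in the paper, except that you prove the density-splitting step explicitly (the paper cites \cite[Lemma~3.4]{KKM19}) and you spell out the regularity of the off-$Y$ part via the concatenated sequence and iterated Proposition~\ref{prop:K}, which the paper merely asserts. Both added details are sound.
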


\begin{proof}
    We prove \ref{prop:onedec:tail} first.
    Suppose that $n \geq 0$. For each $0 \le j \le n$ define
    \begin{align*}
        Y_j &
        = \{ y \in Y : T_{n-j + 1, \ell }(y) \notin Y  \text{ for all $n-j <  \ell \le n$}  \}
        ,
        \\
        Y_j' 
        &= Y \cap   T_{1, n-j}^{-1}   Y_j 
        \\
        & = \{ y \in Y : T_{1, n-j}(y) \in Y \text{ and $T_{1,\ell}(y) \notin Y$ for all $n - j < \ell \le n$} \}.
    \end{align*}
    Observe that the sets $Y_j'$ form a partition of $Y$, so we can write
    \[
        ( T_{1,n}  )_*\mu = \sum_{j=0}^n ( T_{1,n}  )_*\mu_j,
    \]
    where $\mu_j$ is the restriction of $\mu$ to $Y_j'$.

    Next, set
    $\nu_j = (( T_{1, n-j} )_*\mu)|_Y$ for all $0 \le j \le n$ and note that for all measurable $B \subset X$,
    \begin{align*}
        (  T_{n-j+1, n} )_* ( \nu_j |_{Y_j}  ) (B) = \nu_j( Y_j \cap T_{n-j+1, n} ^{-1} (B) )
        = \mu(   T_{1, n-j}^{-1} Y_j   \cap T_{1,n}^{-1}B  ) = (T_{1,n} )_*\mu_j(B),
    \end{align*}
    in other words $(T_{n-j+1, n} )_* ( \nu_j |_{Y_j} ) = (T_{1,n} )_*\mu_j$.

    By Remark~\ref{rmk:on-Y}, $|\nu_j|_{\LL} \leq K_2$; using $\nu_j(Y) \leq 1$ we deduce that
    $\nu_j \leq e^{K_2 \diam Y} m$ and thus the tail of $\nu_j$ is bounded
    by $e^{K_2 \diam Y} h$. Observe that
    $(T_{n-j+1, n} )_* ( \nu_j |_{Y_j} )$ inherits the tail bound from $\nu_j$ with a time shift, namely
    $(T_{n-j+1, n} )_* ( \nu_j |_{Y_j} )$  has tail bound $e^{K_2 \diam Y} h(\cdot + j)$.
    It follows that $(T_{1,n})_* \mu$ has tail bound
    $e^{K_2 \diam Y} \sum_{j=0}^n h(\cdot + j)$, as required.
    
    It remains to prove~\ref{prop:onedec:dec}.
    Let $\theta_0 \in (0,1)$ be such that for every $\theta' \in [0,\theta_0]$,
    every measure $\rho$ on $Y$ with $|\rho|_{\LL} \leq K_1$  can be written as
    $\rho = \rho(Y) \theta' m + \rho'$ with $|\rho'|_{\LL} \leq K_2$.
    Such $\theta_0$ exists and only depends on $K_1, K_2$ and $\diam Y$, see~\cite[Lemma~3.4]{KKM19}.

    Suppose that $n \geq n_0$ and let $\rho_n = \bigl((T_{1,n})_* \mu\bigr)\big|_Y$.
    Observe that $\bigl| \rho_n \bigr|_{\LL} \leq K_1$ and $\rho_n(Y) \geq \delta_0$.
    Let $\theta = \min \{\theta_0 \delta_0, 1/2\}$.
    Then
    \[
        \rho_n = \theta m + \rho'
        \qquad \text{with } |\rho'|_{\LL} \leq K_2
        .
    \]
    Define
    \[
        \mu'
        = (1-\theta)^{-1} \bigl( (T_{1,n})_* \mu - \theta m \bigr)
        = (1-\theta)^{-1} \Bigl( \rho' + \bigl((T_{1,n})_* \mu \bigr)\big|_{X \setminus Y} \Bigr)
        .
    \]
    Then $\mu'$ is a probability measure and
    $(T_{1,n})_* \mu = \theta m + (1-\theta) \mu'$.
    Both $\rho'$ and $\bigl((T_{1,n})_* \mu \bigr)\big|_{X \setminus Y}$
    are regular measures, and thus so is $\mu'$. To bound the tail of
    $\mu'$, we note that
    $\mu' \leq (1-\theta)^{-1} (T_{1,n})_* \mu$ with $(1-\theta)^{-1} \leq 2$,
    and apply the bound on the tail of $(T_{1,n})_* \mu$.

\end{proof}

Further we use $\theta$, $h_n$ and $C_h$ from Proposition~\ref{prop:onedec}.

\begin{cor}
    \label{cor:onedec}
    Let $T_1, T_2, \ldots \in \cT$.
    Suppose that $N \geq 0$ and $\mu$ is a probability measure on $X$ such that $(T_{1,N})_* \mu$
    is supported on $Y$ and $|(T_{1,N})_* \mu|_{\LL} \leq K_2$.
    Then for every $n \geq N + n_0$,
    \[
        \mu = \theta \mu_n + (1-\theta) \mu'_n
        ,
    \]
    where $\mu_n$, $\mu'_n$ are probability measures with $(T_{1,n})_* \mu_n = m$
    and $(T_{1,n})_* \mu'$ regular with tail bound $h_{n-N}$.
\end{cor}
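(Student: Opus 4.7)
The plan is to reduce the statement to Proposition~\ref{prop:onedec} via a shift of the map sequence followed by a Radon--Nikodym lifting. Set $\nu = (T_{1,N})_*\mu$; by hypothesis $\nu$ is a probability measure supported on $Y$ with $|\nu|_{\LL}\leq K_2$. The sequence $T_{N+1},T_{N+2},\ldots$ again lies in $\cT$ and satisfies the standing assumptions with the same uniform constants, so Proposition~\ref{prop:onedec}\ref{prop:onedec:dec} applies to $\nu$ together with this shifted sequence and the index $n-N \geq n_0$. This gives
\[
(T_{N+1,n})_*\nu = \theta m + (1-\theta)\tilde\nu,
\]
where $\tilde\nu$ is a regular probability measure with tail bounded by $h_{n-N}$. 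Composing pushforwards, $(T_{1,n})_*\mu = (T_{N+1,n})_*\nu = \theta m + (1-\theta)\tilde\nu$.

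What remains is to lift this decomposition of $(T_{1,n})_*\mu$ back to $\mu$ itself. Since $(T_{1,n})_*\mu \geq \theta m$, the measure $m$ is absolutely continuous with respect to $(T_{1,n})_*\mu$, and the Radon--Nikodym derivative $g = dm/d((T_{1,n})_*\mu)$ satisfies $0 \leq g \leq \theta^{-1}$. I would then define
\[
d\mu_n = (g\circ T_{1,n})\,d\mu
\]
and verify by the change-of-variables formula that $(T_{1,n})_*\mu_n = m$, so $\mu_n$ is automatically a probability measure. Since $\theta(g\circ T_{1,n})\leq 1$ pointwise, we have $\theta\mu_n\leq\mu$, and hence $\mu'_n = (1-\theta)^{-1}(\mu - \theta\mu_n)$ is a probability measure on $X$ with $(T_{1,n})_*\mu'_n = (1-\theta)^{-1}((T_{1,n})_*\mu - \theta m) = \tilde\nu$. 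This simultaneously gives the decomposition $\mu = \theta\mu_n + (1-\theta)\mu'_n$ and the claimed regularity and tail bound $h_{n-N}$ for $(T_{1,n})_*\mu'_n$.

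There is essentially no obstacle here: once Proposition~\ref{prop:onedec} is granted, the corollary is its shifted, lifted version. The only conceptual point to keep in mind is that every constant appearing in Proposition~\ref{prop:onedec}---the coupling constant $\theta$, the regularity constants $K_1$ and $K_2$, the tail function $h$, and the threshold $n_0$---is uniform across sequences in $\cT$, so replacing $T_1, T_2, \ldots$ by $T_{N+1}, T_{N+2}, \ldots$ comes at no cost. One could equivalently carry out the lifting step by disintegrating $\mu$ along $T_{1,n}$ and integrating the conditional measures against $m$ and $\tilde\nu$ separately, but the Radon--Nikodym formulation above is more direct.
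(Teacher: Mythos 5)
Your proof is correct and follows essentially the same route as the paper: apply Proposition~\ref{prop:onedec} to $\nu = (T_{1,N})_*\mu$ with the shifted sequence $T_{N+1}, T_{N+2}, \ldots$, then lift the decomposition of $(T_{1,n})_*\mu$ back to $\mu$ via the Radon--Nikodym derivative $\bigl(\frac{dm}{d(T_{1,n})_*\mu}\bigr)\circ T_{1,n}$. The only cosmetic difference is that you define $\mu'_n$ as the normalized complement $\mu'_n = (1-\theta)^{-1}(\mu - \theta\mu_n)$, whereas the paper writes an analogous Radon--Nikodym formula for $\mu'_n$ directly; the two are equivalent.
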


\begin{proof}
    Fix $n \geq N + n_0$.
    Proposition~\ref{prop:onedec} gives the decomposition 
    $(T_{1,n})_* \mu = \theta m + (1-\theta) \mu'$,
    where $\mu'$ is a regular probability measure with tail bound $h_{n-N}$.
    Define $\mu_n$ and $\mu_n'$ by
    \[
        d \mu_n = \Bigl( \frac{dm}{d(T_{1,n})_* \mu} \circ T_{1,n} \Bigr) \, d \mu
        \qquad \text{and} \qquad
        d \mu'_n = \Bigl( \frac{d\mu'}{d(T_{1,n})_* \mu} \circ T_{1,n} \Bigr) \, d \mu
        .
    \]
    It is straightforward that $\mu = \theta \mu_n + (1-\theta) \mu'_n$ with
    $(T_{1,n})_*\mu_n = m$ and $(T_{1,n})_*\mu'_n = \mu'$.
    This is the desired decomposition.
\end{proof}

\begin{cor}
    \label{cor:const-tail}
    Let $T_1, T_2, \ldots \in \cT$ and let $\mu$ be a regular probability measure
    with tail bound $r$.
    Then $(T_{1,n})_* \mu$ is a regular probability measure with
    tail bound $r_n(\ell) = r(n+\ell) + C_h \sum_{j=0}^n h(j + \ell)$.
\end{cor}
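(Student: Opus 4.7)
The corollary has two claims. For the regularity of $(T_{1,n})_*\mu$, my plan is to prove the \emph{single-map lemma}---if $\mu$ is regular and $T \in \cT$, then $T_*\mu$ is regular---and iterate $n$ times. Fix a test sequence $T'_1, T'_2, \ldots \in \cT$ with partition $\cP'$, first-entry time $\tau'$, and a cell $a' \in \cP'$. The key device is the combined sequence $\hat{T} := (T, T'_1, T'_2, \ldots) \in \cT$, with partition $\hat{\cP}$ and first-entry time $\htau$; regularity of $\mu$ for $\hat{T}$ yields $|(\hat{F}_{\hat{a}})_*(\mu|_{\hat{a}})|_{\LL} \leq K_1$ for every $\hat{a} \in \hat{\cP}$, where $\hat{F}_{\hat{a}} := \hat{T}_{1, \htau(\hat{a})}$. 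If $a' \subset X \setminus Y$, then every $x \in T^{-1}(a')$ has $\htau(x) = 1 + \tau'(a')$, and the bijection structure of first-return maps gives $T^{-1}(a') = \bigsqcup_{\hat{a}} \hat{a}$ (summing over $\hat{a}$ with $T(\hat{a}) = a'$), with $T|_{\hat{a}} \colon \hat{a} \to a'$ a bijection; hence $(T'_{1,\tau'(a')})_*((T_*\mu)|_{a'}) = \sum_{\hat{a}} (\hat{F}_{\hat{a}})_*(\mu|_{\hat{a}})$, whose log-Lipschitz norm is $\leq K_1$ by the countable-sum form of Remark~\ref{rmk:on-Y}.

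The case $a' \subset Y$ is the crux. Here every $x \in T^{-1}(a')$ has $\htau(x) = 1$, and the $\htau=1$ cells of $\hat{\cP}$ partition $T^{-1}(Y)$ with each $T|_{\hat{a}} \colon \hat{a} \to Y$ a bijection. The measure $\sigma_{\hat{a}} := T_*(\mu|_{\hat{a}}) = (\hat{F}_{\hat{a}})_*(\mu|_{\hat{a}})$ lives on $Y$ with $|\sigma_{\hat{a}}|_{\LL} \leq K_1 < K_2$ (the strict inequality is guaranteed by Proposition~\ref{prop:K}). By the remark following the definition of regular, any measure on $Y$ with log-Lipschitz norm at most $K_2$ is itself regular, so each $\sigma_{\hat{a}}$ is regular; applying its regularity to the test sequence $T'$ gives $|(T'_{1,\tau'(a')})_*(\sigma_{\hat{a}}|_{a'})|_{\LL} \leq K_1$, and Remark~\ref{rmk:on-Y} again controls the sum over $\hat{a}$. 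Iterating the single-map lemma $n$ times yields the regularity of $(T_{1,n})_*\mu$.

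For the tail bound, I would partition the event
\[
    E = \{x \in X : T_{1,m}(x) \notin Y \text{ for all } n+1 \leq m \leq n + \ell - 1\}
\]
according to the last time $m \in \{0, 1, \ldots, n\}$ with $T_{1,m}(x) \in Y$ (setting $m = 0$ if no such time exists in $\{1, \ldots, n+\ell-1\}$). The $m = 0$ piece contributes at most $r(n + \ell)$ directly from the assumed tail bound of $\mu$. For $m \geq 1$, Remark~\ref{rmk:on-Y} gives $|\rho_m|_{\LL} \leq K_1 \leq K_2$ for $\rho_m := ((T_{1,m})_*\mu)|_Y$, and combined with $\rho_m(Y) \leq 1$ this yields a pointwise density bound $\rho_m \leq e^{K_2 \diam Y}$; then the return-time tail $h$ for the shifted sequence $T_{m+1}, T_{m+2}, \ldots$ gives $\mu(E_m) \leq e^{K_2 \diam Y} h(n + \ell - m)$. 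Summing over $m = 1, \ldots, n$ and reindexing $j = n - m$ bounds the total by $r(n+\ell) + C_h \sum_{j=0}^{n} h(j + \ell) = r_n(\ell)$, using $C_h = 2 e^{K_2 \diam Y}$. The main obstacle is the $a' \subset Y$ subcase of the regularity step, which hinges on the strict inequality $K_1 < K_2$: without this gap the intermediate measures $\sigma_{\hat{a}}$ on $Y$ could not be promoted back to regular measures, and the induction would fail to close. The tail bound, by contrast, is a direct computation once the log-Lipschitz control on $\rho_m$ is in hand.
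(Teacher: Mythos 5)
Your tail-bound argument is correct and is essentially the computation the paper performs, just packaged differently: the paper decomposes $\mu$ over the cells $a\in\cP$ with $\tau(a)\le n$ and invokes Proposition~\ref{prop:onedec}\ref{prop:onedec:tail} for each piece (whose proof is exactly your last-visit-to-$Y$ decomposition together with the density bound $\rho_m\le e^{K_2\diam Y}\,m$), whereas you run that decomposition directly on $\mu$. Both routes give $r(n+\ell)+C_h\sum_{j=0}^n h(j+\ell)$.

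The more substantial difference is that you actually prove the regularity of $(T_{1,n})_*\mu$, which the paper's proof does not address at all (it verifies only the tail bound; regularity of pushforwards is asserted here and in Proposition~\ref{prop:onedec} without argument). Your single-map lemma and its iteration are the right mechanism, and the $a'\subset Y$ case is sound: each $\sigma_{\hat a}$ is supported on $Y$ with $|\sigma_{\hat a}|_{\LL}\le K_1\le K_2$, hence regular by the remark following the definition, and Remark~\ref{rmk:on-Y} controls the countable sum. (Strictness of $K_1<K_2$ is not really the crux; $K_1\le K_2$ suffices.) The one soft spot is the $a'\subset X\setminus Y$ case: you assert that $T^{-1}(a')$ is a disjoint union of cells of the partition $\hat{\cP}$ for the concatenated sequence, but the axioms do not force this --- the bijectivity in (NU:3) is assumed only for cells inside $Y$, and nothing prevents a cell $\hat a\subset X\setminus Y$ with $\hat\tau(\hat a)=1+\tau'(a')$ from straddling $T^{-1}(a')$ and $T^{-1}(a'')$ for a different cell $a''$ with the same entry time, in which case $(\hat F_{\hat a})_*(\mu|_{\hat a\cap T^{-1}(a')})$ is not directly controlled by the regularity of $\mu$. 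Closing this requires the (implicit, and evidently intended) compatibility that the designated partition for a concatenated sequence refines the pullback of the partition for the tail sequence; since the paper never proves the regularity claim, this is a gap in its formalism as much as in your argument, but it should be stated as a hypothesis or checked in the concrete examples.
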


\begin{proof}
    Let $\cP$ be a partition of $X$ corresponding to $T_1, T_2, \ldots$
    and let $a \in \cP$. Let $\mu_a$ be the restriction
    of $\mu$ on $a$. By Proposition~\ref{prop:onedec}, if $\tau(a) \leq n$ then
    $(T_{1,n})_* \mu_a$ has tail bound $\ell \mapsto |\mu_a| C_h \sum_{j=0}^n h(j + \ell)$.
    Thus $(T_{1,n})_* \bigl( \sum_{a \in \cP : \tau(a) \leq n} \mu_a \bigr)$ has tail bound
    $\ell \mapsto C_h \sum_{j=0}^n h(j + \ell)$.
    It remains to notice that $(T_{1,n})_* \bigl( \mu - \sum_{a \in \cP : \tau(a) \leq n} \mu_a \bigr)$
    has tail bound $\ell \mapsto r(n + \ell)$.
\end{proof}

\begin{lem}
    \label{lem:mixdec}
    Suppose that $\mu$ is a regular probability measure with tail bound $r$ where 
    $r(n)$ is nondecreasing, $r(1) = 1$ and $\lim_{n \to \infty} r(n) = 0$.
    Suppose that $T_1, T_2, \ldots \in \cT$.
    Then
    \[
        \mu = \sum_{j=n_0+1}^{\infty} \alpha_j [\theta \mu'_j + (1-\theta) \mu_j ]
        ,
    \]
    where $\alpha_j = r(j-n_0) - r(j+1-n_0)$ and $\mu_j$, $\mu'_j$ are probability measures such that
    $(T_{1,j})_* \mu_j$ is regular with tail bound $h_j$
    and $(T_{1,j})_* \mu'_j = m$.
\end{lem}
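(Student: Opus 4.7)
The plan is to partition $\mu$ according to the first entry time to $Y$, then re-couple those pieces with the target distribution $(\alpha_j)$ by a stochastic-domination argument, and finally apply Corollary~\ref{cor:onedec} to each piece at the matched time.

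For $N \ge 1$ set $A_N = \{\tau = N\} = \bigcup \{a \in \cP : \tau(a) = N\}$. By regularity of $\mu$ and Remark~\ref{rmk:on-Y}, $(T_{1,N})_*(\mu|_{A_N})$ is supported on $Y$ with $|\cdot|_{\LL} \le K_1 \le K_2$, and $\{A_N\}_{N \ge 1}$ is a $\mu$-partition of $X$ since $\mu(\tau \ge n) \le r(n) \to 0$. View $(\mu(A_N))_N$ as the law of an integer random variable $\tau$ and $(\alpha_j)_j$ as the law of $\tau^*$; a telescoping computation yields $\bP(\tau^* - n_0 \ge J) = r(J) \ge \mu(\tau \ge J)$, so $\tau \preceq \tau^* - n_0$ stochastically. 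The quantile coupling $(F_\tau^{-1}(U), F_{\tau^*-n_0}^{-1}(U))$ for uniform $U$ produces weights $K(N, j) \ge 0$ with $\sum_j K(N, j) = 1$, $\sum_N \mu(A_N) K(N, j) = \alpha_j$, and $K(N, j) = 0$ unless $j \ge N + n_0$.

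For each pair $(N, j)$ with $\mu(A_N) K(N, j) > 0$, apply Corollary~\ref{cor:onedec} to the probability measure $\hat\mu_N = \mu|_{A_N} / \mu(A_N)$ with the Corollary's ``$N$'' equal to $N$ and ``$n$'' equal to $j$. This yields $\hat\mu_N = \theta \nu_{N,j} + (1-\theta) \nu'_{N,j}$ with $(T_{1,j})_* \nu_{N,j} = m$ and $(T_{1,j})_* \nu'_{N,j}$ regular with tail bound $h_{j-N} \le h_j$. Setting
\[
    \mu'_j = \alpha_j^{-1} \sum_N \mu(A_N) K(N, j) \, \nu_{N,j}, \qquad
    \mu_j  = \alpha_j^{-1} \sum_N \mu(A_N) K(N, j) \, \nu'_{N,j},
\]
both are probability measures by construction. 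Linearity gives $(T_{1,j})_* \mu'_j = m$, while the convexity of regularity and of the $|\cdot|_{\LL}$-bound (Remark~\ref{rmk:on-Y}), combined with $h_{j-N} \le h_j$, imply that $(T_{1,j})_* \mu_j$ is regular with tail bound $h_j$. Reassembling, $\mu = \sum_N \mu(A_N) \hat\mu_N = \sum_j \alpha_j [\theta \mu'_j + (1-\theta) \mu_j]$, as required.

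The only nontrivial step is the coupling: the tail domination $\mu(\tau \ge J) \le r(J) = \sum_{j \ge J + n_0} \alpha_j$ is precisely what lets the transport respect the constraint $j \ge N + n_0$ that Corollary~\ref{cor:onedec} demands. Without this constraint the Corollary's hypothesis would fail on some pieces and the recipe would collapse. Everything else is a routine reassembly of Proposition~\ref{prop:onedec}, Corollary~\ref{cor:onedec}, and the convexity built into Remark~\ref{rmk:on-Y}.
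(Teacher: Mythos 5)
Your proof is correct and follows essentially the same route as the paper: partition $\mu$ by the first entry time to $Y$, apply Corollary~\ref{cor:onedec} to each piece at a matched later time $j \geq N + n_0$, and redistribute the mass so that the $j$-marginal is $(\alpha_j)$. The only cosmetic difference is that you construct the redistribution weights explicitly via stochastic domination ($\mu(\tau \geq J) \leq r(J) = \bP(\tau^* - n_0 \geq J)$) and the quantile coupling, where the paper instead cites \cite[Proposition~4.7]{KKM19} for the existence of the coefficients $\xi_{\ell,n}$ --- these are the same object.
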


\begin{proof}
    Let $\cP$ be the partition of $X$ corresponding to $T_1, T_2, \ldots$ and let
    \[
        A_n
        = \cup \{ a \in \cP : \tau(a) = n \}
        .
    \]
    Let $\nu_n = \mu|_{A_n}$.   
    Then for each $n \geq 1$, the measure $(T_{1,n})_* \nu_n$ is supported on $Y$ and
    satisfies $|(T_{1,n})_* \nu_n|_{\LL} \leq K_1$.
    By Corollary~\ref{cor:onedec}, for each $\ell \geq n + n_0$,
    \begin{equation}
        \label{eq:gggn}
        \nu_n = \mu(A_n) \bigl[ \theta \nu_{n, \ell} + (1-\theta) \nu_{n,\ell}' \bigr]
        ,
    \end{equation}    
    where $\nu_{n,\ell}$, $\nu_{n,\ell}'$ are probability measures with
    $(T_{1,\ell})_* \nu_{n,\ell} = m$ and $(T_{1,\ell})_* \nu_{n,\ell}'$ regular with tail bound $h_{\ell-n}$.

    We observe that
    \[
        \sum_{n=1}^\ell \mu(A_n) \geq r(1) - r(\ell)
        \quad \text{ and } \quad
        \sum_{n=1}^\infty \mu(A_n) = r(1) - \lim_{\ell \to \infty} r(\ell) = 1
        .
    \]
    Hence (see \cite[Proposition~4.7]{KKM19}) there exist nonnegative constants
    $\xi_{\ell,n}$, $1 \leq n \leq \ell < \infty$, such that
    \begin{align*}
        \sum_{n \leq \ell} \xi_{\ell,n} \mu(A_n)
        & = r(\ell) - r(\ell+1) 
        & & \text{ for each } \ell
        \\
        \sum_{\ell \geq n} \xi_{\ell,n} 
        &= 1
        & & \text{ for each } n
        .
    \end{align*}
    
    For $j \geq n_0 + 1$, let
    $\chi_j = \sum_{n=1}^{j - n_0} \xi_{j-n_0,n} \nu_n$. Then $\mu = \sum_{j=n_0+1}^\infty \chi_j$ and
    $\chi_j(X) = \alpha_j$. Due to~\eqref{eq:gggn},
    \[
        \chi_j = \alpha_j [ \theta \mu_j + (1-\theta) \mu'_j ]
    \]
    with $\mu_j  = \alpha_j^{-1} \sum_{n=1}^{j-n_0} \xi_{j-n_0, n} \mu(A_n) \nu_{n,j}$
    and  $\mu'_j = \alpha_j^{-1} \sum_{n=1}^{j-n_0} \xi_{j-n_0, n} \mu(A_n) \nu'_{n,j}$.
    (It is possible that $\alpha_j = 0$, but this does not create problems and we ignore it
    for simplicity.)

    It remains to observe that $\mu_j$ and $\mu'_j$ are probability measures with $(T_{1,j})_* \mu_j = m$
    and $(T_{1,j})_* \mu'_j$ regular with tail bound $h_j$.
\end{proof}

Similar to Corollary~\ref{cor:onedec} we obtain:

\begin{cor}
    \label{cor:mixdec}
    Suppose that $T_1, T_2, \ldots \in \cT$.
    Suppose that $N \geq 0$ and $\mu$ is a probability measure such that
    $(T_{1,N})_* \mu$ is regular has tail bound $r$ where $r(n)$ is nondecreasing, $r(1) = 1$
    and $\lim_{n \to \infty} r(n) = 0$.
    Then
    \[
        \mu = \sum_{j=n_0+1}^{\infty} \alpha_j [\theta \mu'_j + (1-\theta) \mu_j ]
        ,
    \]
    where $\alpha_j = r(j-n_0) - r(j+1-n_0)$ and $\mu_j$, $\mu'_j$ are probability measures such that
    $(T_{1,N+j})_* \mu_j$ is regular with tail bound $h_j$
    and $(T_{1,N+j})_* \mu'_j = m$.
\end{cor}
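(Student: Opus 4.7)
The plan is to reduce Corollary~\ref{cor:mixdec} to Lemma~\ref{lem:mixdec} by shifting the sequence of maps and then pulling the resulting decomposition back to $\mu$ with the Radon--Nikodym trick used in Corollary~\ref{cor:onedec}.

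First I would consider the shifted sequence $\tilde T_k = T_{N+k}$, $k \geq 1$, whose compositions satisfy $\tilde T_{1,j} = T_{N+1,N+j}$. Since $\cT$ is just a class of maps, $\tilde T_1, \tilde T_2, \ldots \in \cT$ as well, and the corresponding partition, return time to $Y$ and constants $\lambda, K, \delta_0, n_0, h$ are the same. Apply Lemma~\ref{lem:mixdec} to the measure $\tmu = (T_{1,N})_* \mu$ along the sequence $\tilde T_k$; the hypotheses are exactly what is assumed about $(T_{1,N})_* \mu$. This yields
\[
    \tmu = \sum_{j=n_0+1}^{\infty} \alpha_j [\theta \tilde\mu'_j + (1-\theta) \tilde\mu_j],
\]
with $\alpha_j = r(j - n_0) - r(j + 1 - n_0)$, and probability measures $\tilde\mu_j, \tilde\mu'_j$ satisfying $(\tilde T_{1,j})_* \tilde\mu_j$ regular with tail bound $h_j$ and $(\tilde T_{1,j})_* \tilde\mu'_j = m$.

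Next I would lift the decomposition of $\tmu$ to one of $\mu$, mirroring Corollary~\ref{cor:onedec}. Define probability measures $\mu_j, \mu'_j$ on $X$ by
\[
    d\mu_j = \Bigl( \frac{d\tilde\mu_j}{d\tmu} \circ T_{1,N} \Bigr) \, d\mu,
    \qquad
    d\mu'_j = \Bigl( \frac{d\tilde\mu'_j}{d\tmu} \circ T_{1,N} \Bigr) \, d\mu.
\]
By construction $(T_{1,N})_* \mu_j = \tilde\mu_j$ and $(T_{1,N})_* \mu'_j = \tilde\mu'_j$, and summing the densities,
\[
    \sum_{j=n_0+1}^{\infty} \alpha_j [\theta \mu'_j + (1-\theta) \mu_j]
\]
has density $(d\tmu / d\tmu)\circ T_{1,N} \equiv 1$ with respect to $\mu$, so it equals $\mu$. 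Finally, $(T_{1,N+j})_* \mu_j = (\tilde T_{1,j})_* (T_{1,N})_* \mu_j = (\tilde T_{1,j})_* \tilde\mu_j$ is regular with tail bound $h_j$, and similarly $(T_{1,N+j})_* \mu'_j = m$, as required.

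There is no real obstacle here: the step that requires a moment of care is simply checking that the densities $d\tilde\mu_j/d\tmu$ make sense (they do, since $\tilde\mu_j \ll \tmu$ by the lemma's construction) and that pulling back by $T_{1,N}$ correctly produces probability measures on $X$ whose pushforward under $T_{1,N}$ gives back $\tilde\mu_j, \tilde\mu'_j$. A minor care point is the harmless degeneracy $\alpha_j = 0$, which is handled in the same way as in Lemma~\ref{lem:mixdec}.
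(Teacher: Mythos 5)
Your proposal is correct and is essentially the argument the paper intends: the paper's proof is just the remark ``similar to Corollary~\ref{cor:onedec}'', i.e.\ apply the previous decomposition result to $(T_{1,N})_*\mu$ along the shifted sequence and pull back via Radon--Nikodym derivatives composed with the relevant composition of maps, exactly as you do. Your verification that $\tilde\mu_j, \tilde\mu'_j \ll \tmu$ and that $(T_{1,N+j})_*\mu_j = (\tilde T_{1,j})_*\tilde\mu_j$ fills in the details correctly.
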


Further we suppose that $r$ is nonnegative with $\lim_{n \to \infty} r(n) = 0$
and define
\[
    \hr (n) = \min \{ 1, r(1), \ldots, r(n)\}
    .
\]
This way, $\hr$ is nonincreasing and $\hr(1) = 1$;
for a probability measure, tail bound $r$ is equivalent to $\hr$.
Similarly define $\hh$ and $\hh_k$.

Let $X_1, X_2, \ldots$ be random variables with values in $\{n_0, n_0+1,\ldots\}$ such that
for all $\ell \geq n_0$,
\begin{align*}
    \bP(X_1 \geq \ell) 
    &= \hr(\ell - n_0)
    \\
    \bP(X_j \geq \ell \mid X_1, \ldots, X_{j-1}) 
    & = \hh_{X_{j-1}}(\ell - n_0)
    \quad \text{ for } j \geq 2
    .
\end{align*}
Let $\tau$ be a geometric random variable on $\{1,2,\ldots\}$ with parameter $\theta$,
namely $\bP(\tau = \ell) = (1-\theta)^{\ell-1} \theta$. Let $\tau$ be
independent from $\{X_j\}$.
Let
\[
    S 
    = X_1 + \ldots + X_\tau
    .
\]

\begin{lem}
    \label{lem:proproprobab}
    Suppose that $\mu$ is as in Theorem~\ref{thm:decdec}
    and $T_1, T_2, \ldots \in \cT$.
    Then there exists a decomposition
    \[
        \mu = \sum_{n=1}^\infty \bP(S = n) \mu_n
    \]
    where $\mu_n$ are probability measures such that $(T_{1,n})_* \mu_n = m$.
\end{lem}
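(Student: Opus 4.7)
The plan is to iterate Corollary~\ref{cor:mixdec}: at each stage I peel off a $\theta$-fraction of mass whose pushforward under $T_{1,\cdot}$ is already $m$, and recursively split the remaining $(1-\theta)$-fraction. The weights produced will, by construction, match the distribution of the stopped random walk $X_1 + \cdots + X_\tau$.

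First I apply Corollary~\ref{cor:mixdec} with $N = 0$ to $\mu$, using $\hr$ as the tail bound (which is legitimate since $\hr$ is nonincreasing, $\hr(1) = 1$, and $\hr \to 0$). This yields
\[
    \mu = \sum_{j > n_0} [\hr(j - n_0) - \hr(j+1-n_0)] \, [\theta \mu'_j + (1-\theta) \tilde\mu_j],
\]
with $(T_{1,j})_* \mu'_j = m$ and $(T_{1,j})_* \tilde\mu_j$ regular with tail $h_j$, which I upgrade to $\hh_j$. Crucially, the bracketed weight is precisely $\bP(X_1 = j)$. For each $\tilde\mu_j$ I then apply Corollary~\ref{cor:mixdec} with $N = j$ and tail $\hh_j$, obtaining weights $\hh_j(k-n_0) - \hh_j(k+1-n_0) = \bP(X_2 = k \mid X_1 = j)$, good measures $\mu'_{j,k}$ with $(T_{1,j+k})_* \mu'_{j,k} = m$, and recyclable measures $\tilde\mu_{j,k}$ whose $(T_{1,j+k})_*$-image has tail $\hh_k$.

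After $Q$ iterations, induction yields
\[
    \mu = \sum_{q=1}^{Q} (1-\theta)^{q-1} \theta \sum_{j_1, \ldots, j_q > n_0} \bP(X_1 = j_1, \ldots, X_q = j_q) \, \mu'_{j_1, \ldots, j_q} + R_Q,
\]
where each $\mu'_{j_1, \ldots, j_q}$ is a probability measure with $(T_{1, s_q})_* \mu'_{j_1, \ldots, j_q} = m$ for $s_q = j_1 + \cdots + j_q$, and $R_Q$ is a nonnegative measure of total mass $(1-\theta)^Q$. Letting $Q \to \infty$ in total variation makes $R_Q$ vanish. Since $\bP(\tau = q) = (1-\theta)^{q-1} \theta$ and $\tau$ is independent of $\{X_j\}$, regrouping by $n = s_q$ produces $\mu = \sum_{n \geq 1} \bP(S = n) \, \mu_n$ with
\[
    \mu_n = \bP(S = n)^{-1} \sum_{q \geq 1} \sum_{\substack{j_1 + \cdots + j_q = n \\ j_1, \ldots, j_q > n_0}} \bP(\tau = q, X_1 = j_1, \ldots, X_q = j_q) \, \mu'_{j_1, \ldots, j_q}
\]
(and $\mu_n$ arbitrary when $\bP(S = n) = 0$). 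Each $\mu_n$ is a convex combination of probability measures all pushed to $m$ by $T_{1,n}$, so $(T_{1,n})_* \mu_n = m$, as required.

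The only real obstacle is the bookkeeping that justifies each successive application of Corollary~\ref{cor:mixdec}: one has to check that the inherited tail bounds $\hh_j$ are nonincreasing, equal $1$ at $1$, and tend to $0$ at infinity, which the ``hat'' operation guarantees together with \ref{ass:integrable}. Everything else (interchanging countable sums of nonnegative measures, passing to the total-variation limit $R_Q \to 0$, and identifying the resulting generating-function form of $\bP(S = n)$) is routine.
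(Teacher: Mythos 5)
Your proof is correct and follows essentially the same route as the paper: iterate Corollary~\ref{cor:mixdec}, identify the resulting weights with $\bP(\tau=q,\,X_1=j_1,\ldots,X_q=j_q)$, and regroup by the total index sum $j_1+\cdots+j_q=n$. The only cosmetic difference is that you track the remainder $R_Q$ of mass $(1-\theta)^Q$ explicitly before passing to the limit, whereas the paper writes the infinite expansion directly; your bookkeeping of the inherited tail bounds $\hh_j$ (nonincreasing, value $1$ at $1$, vanishing at infinity) is exactly the point the paper handles via the ``hat'' normalization.
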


\begin{proof}
    Starting from the decomposition from Corollary~\ref{cor:mixdec},
    we apply the same decomposition to $\mu_j$ and so on recursively to obtain:
    \begin{equation}
        \label{eq:hnak}
        \mu = \theta \sum_{j > n_0} \alpha_j \mu'_j
        + (1-\theta) \theta \sum_{j,k > n_0} \alpha_{j,k} \mu'_{j,k}
        + (1-\theta)^2 \theta \sum_{j,k,\ell > n_0} \alpha_{j,k,\ell} \mu'_{j,k,\ell}
        + \cdots
    \end{equation}
    where
    \begin{itemize}
        \item $(T_{1,j})_* \mu'_j = m$ and $\alpha_j = \hr(j-n_0) - \hr(j+1-n_0)$,
        \item $(T_{1,j+k})_* \mu'_{j,k} = m$ and $\alpha_{j,k} = \alpha_j (\hh_j(k-n_0) - \hh_j(k+1-n_0))$,
        \item $(T_{1,j+k+\ell})_* \mu'_{j,k,\ell} = m$ and
            $\alpha_{j,k,\ell} = \alpha_{j,k} (\hh_k(\ell-n_0) - \hh_k(\ell+1-n_0))$,
        \item[] and so on.
    \end{itemize}

    We observe that for each $n \geq 1$ and $j_1, j_2, \ldots, j_n \geq n_0$,
    \[
        (1-\theta)^{n-1} \theta \alpha_{j_1, j_2, \ldots, j_n}
        = \bP(\tau = n, X_1 = j_1, \ldots, X_n = j_n)
        .
    \]
    Grouping the terms in~\eqref{eq:hnak} by the sum of indices,
    we obtain the required decomposition with
    \[
        \mu_n =
        \sum_{\substack{k \geq 1 \\  j_1+\cdots+j_k=n}}
        \alpha_{j_1, \ldots, j_k} \mu'_{j_1, \ldots, j_k}
        \Bigg/
        \sum_{\substack{k \geq 1 \\  j_1+\cdots+j_k=n}}
        \alpha_{j_1, \ldots, j_k}
        .
    \]
\end{proof}

To complete the proof of Theorem~\ref{thm:decdec}, it remains to estimate the tails
$\bP(S \geq n)$, as it is done in the following two propositions.

\begin{prop}
    \label{prop:Stail}
    Suppose that $\mu$ is as in Theorem~\ref{thm:decdec}, $T_1, T_2, \ldots \in \cT$
    and $h(n) \leq C_\beta n^{-\beta}$ with $\beta > 1$.
    \begin{enumerate}[label=(\alph*)]
        \item\label{prop:Stail:pol}
            If $r(n) \leq C'_\beta n^{-\beta'}$ where $\beta' \in (0, \beta]$, then
            \[
                \bP(S \geq n)
                \leq C'_\beta C n^{-\beta'}
                ,
            \]
            where $C$ depends only on $n_0$, $\theta$, $C_h$, $C_\beta$, $\beta'$ and $\beta$.
        \item\label{prop:Stail:exp}
            If $\sum_{j=1}^\infty r(j) < \infty$ (which corresponds to $\bE X_1 < \infty$), then
            for $n \geq 2 n_0$,
            \[
                \bP(S \geq n)
                \leq r(\floor{n / 2} - n_0) + C n^{-\beta} \sum_{j=1}^\infty r(j)
                ,
            \]
            where $C$ depends only on $n_0$, $\theta$, $C_h$, $C_\beta$ and $\beta$.
    \end{enumerate}
\end{prop}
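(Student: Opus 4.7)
The plan is to bound $\bP(S\ge n)$ by combining pointwise estimates on the conditional tails $\hh_k$ with the geometric structure of $\tau$. From $h(n)\le C_\beta n^{-\beta}$, splitting $\sum_{j=0}^k h(j+\ell-n_0)$ into a count-times-pointwise-max estimate and a tail sum $\sum_{m\ge \ell-n_0}m^{-\beta}$ yields the two-sided pointwise bound
\[
    \hh_k(\ell-n_0) \le C\,\min\bigl\{(k+1)(\ell-n_0)^{-\beta},\,(\ell-n_0)^{-\beta+1}\bigr\} \qquad (\ell > n_0),
\]
with $C$ depending only on $C_\beta$, $C_h$ and $\beta$. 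The first bound is linear in the conditioning variable and dominates when $k$ is small; the second is uniform in $k$.

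The main decomposition is
\[
    \bP(S \ge n) \le \bP(X_1 \ge n/2) + \bP(X_2+\cdots+X_\tau \ge n/2),
\]
obtained by splitting $\{X_1+\cdots+X_k\ge n\}\subset\{X_1\ge n/2\}\cup\{X_2+\cdots+X_k\ge n/2\}$ and using $\sum_{k\ge 1}(1-\theta)^{k-1}\theta=1$. The first summand equals $\hr(\lceil n/2\rceil - n_0)$, which gives $r(\floor{n/2}-n_0)$ for part (b) and $C\,C'_\beta n^{-\beta'}$ for part (a). For the second summand, conditioning on $\tau=k$ and observing that $X_2+\cdots+X_k\ge n/2$ forces some $X_j\ge n/(2(k-1))$, one uses the marginal bound $\bP(X_j\ge\ell)\le C(\bE X_{j-1}+1)\ell^{-\beta}$ to obtain
\[
    \bP(X_2+\cdots+X_k\ge n/2) \le C k^{\beta+1}(1+M)n^{-\beta},
\]
where $M=\sup_{j\ge 1}\bE X_j$. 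Summing over $k$ against $(1-\theta)^{k-1}\theta$ converges (since $k^{\beta+1}$ is dominated by the geometric weights) and yields a bound of the form $C(1+M)n^{-\beta}$.

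It remains to bound $M$. Integrating $\hh_k$ gives $\bE[X_j\mid X_{j-1}=k]\le n_0+C\psi(k+1)$, with $\psi$ bounded ($\beta>2$), logarithmic ($\beta=2$), or equal to $k^{2-\beta}$ ($\beta\in(1,2)$); in each case $\psi$ is concave, so Jensen's inequality gives $\bE X_j\le n_0+C\psi(\bE X_{j-1}+1)$. Iterating shows that $M$ is bounded by some quantity $M(\beta,\bE X_1)$, where $\bE X_1\le n_0+\sum_j r(j)$. In part (b) this directly yields the advertised bound $r(\floor{n/2}-n_0)+Cn^{-\beta}\sum r(j)$. In part (a), for $\beta'>1$ the same mean-bound argument applies; for $\beta'\le 1$ (where $\bE X_1$ may diverge), I would instead iterate the marginal tail estimate $\bP(X_j\ge\ell)\le C\ell^{-\beta}\,\bE[\min\{X_{j-1}+1,\ell\}]$, noting that the effective tail exponent improves by $\beta-1>0$ at each pass and stabilizes at $\beta$ after finitely many steps, so that the sum contribution is $O(n^{-\beta})$, which is absorbed into $C\,C'_\beta n^{-\beta'}$ since $\beta\ge\beta'$.

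The main technical obstacle is this mean-bound step for $\beta\in(1,2]$, where the conditional mean grows like $X_{j-1}^{2-\beta}$ and a naive iteration could blow up; concavity of $\psi$ together with Jensen is precisely what keeps the recursion stable. A secondary difficulty is the bookkeeping in part (a) to ensure the final constant depends linearly on $C'_\beta$ and not hidden through $M$, which for $\beta'\le 1$ necessitates the alternative iterated-tail argument rather than the mean-based one.
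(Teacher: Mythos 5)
Your proposal is correct and shares the paper's overall architecture: the pointwise bound $\hh_k(\ell-n_0)\le C\min\{(k+1)(\ell-n_0)^{-\beta},(\ell-n_0)^{-\beta+1}\}$, the split $\bP(S\ge n)\le\bP(X_1\ge n/2)+\bP(X_2+\cdots+X_\tau\ge n/2)$, and the union bound over the geometrically many remaining summands all appear in the paper (for part (a) the paper uses thresholds $n/t$ on all $t$ summands in~\eqref{eq:777} instead of splitting off $X_1$ first, a harmless variant; for part (b) it uses exactly your split). Where you genuinely diverge is the key step of controlling $\bP(X_j\ge\ell)$ uniformly in $j\ge2$. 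The paper conditions on $X_1$ throughout: from $\bE(X_k\mid X_{k-1})\le C+X_{k-1}/2$ it deduces $\sup_{k\ge2}\bE(X_k\mid X_1)\le CX_1$ and hence the bound~\eqref{eq:Xtail}, $\bP(X_k\ge j\mid X_1)\le Cj^{-\beta}X_1$, which is \emph{linear} in $X_1$; integrating $\min\{Cj^{-\beta}X_1,1\}$ against the law of $X_1$ then treats the finite-mean and infinite-mean regimes in a single computation and makes the linear dependence on $C'_\beta$ (resp.\ on $\sum_j r(j)$) immediate. You instead run an unconditional-mean recursion via Jensen and concavity, and patch the regime $\beta'\le1$ with an iterated tail bootstrap; both routes work, but the paper's conditional version is shorter and avoids the case split. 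Two small corrections to yours: (i) for the first few indices $2\le j\le j_0$ the bootstrap only yields tail exponent $\min\{\beta'+(j-1)(\beta-1),\beta\}$, so those finitely many terms contribute $O(n^{-\beta'})$, not $O(n^{-\beta})$ as written --- still sufficient since all exponents are $\ge\beta'$; (ii) your recursions keep the dependence on $C'_\beta$ (or on $M$) only \emph{affine}, and to get the asserted linear dependence one should note explicitly that $r(1)\ge\mu(X)=1$, hence $C'_\beta\ge1$ and $\sum_j r(j)\ge1$, so the additive constants can be absorbed.
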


\begin{proof}
    Let $C$ denote various constants which depend only on $n_0$, $\theta$, $C_h$, $C_\beta$ and $\beta$.
    Suppose, without loss of generality, that $h$ is nonincreasing, so that $\hh_n(\ell) \leq h_n(\ell)$.
    
    Write, for $k \geq 2$,
    \begin{align*}
        \bE(X_k \mid X_{k-1}) - n_0
        & = \sum_{j=n_0 + 1}^\infty \bP(X_k \geq j \mid X_{k-1})
        = \sum_{j=n_0 + 1}^\infty \hh_{X_{k-1}}(j - n_0)
        \\
        & \leq \sum_{j=n_0 + 1}^\infty h_{X_{k-1}}(j - n_0)
        = C_h \sum_{\ell = 0}^{X_{k-1}} \sum_{j=n_0 + 1}^\infty h(\ell + j - n_0)
        \\
        & \leq C_h C_\beta \sum_{\ell = 0}^{X_{k-1}} \sum_{j=n_0 + 1}^\infty (\ell + j - n_0)^{-\beta}
        \leq C + X_{k-1} / 2
        .
    \end{align*}
    Hence $\bE(X_k \mid X_1) \leq C + \bE(X_{k-1} \mid X_1) / 2$.
    By induction,
    \begin{equation}
        \label{eq:ltrrr}
        \sup_{k \geq 2} \bE (X_k \mid X_1)
        \leq C X_1
        .
    \end{equation}

    Next, for $k \geq 2$ and $j > n_0$,
    \begin{align*}
        \bP(X_k \geq j \mid X_{k-1})
        & = \hh_{X_{k-1}}(j - n_0)
        \leq C_h C_\beta (X_{k-1} + 1) (j-n_0)^{-\beta}
        .
    \end{align*}
    Taking conditional expectation with respect to $X_1$ and using~\eqref{eq:ltrrr},
    we obtain
    \begin{equation}
        \label{eq:Xtail}
        \bP(X_k \geq j \mid X_1)
        \leq C j^{-\beta} X_1
        \qquad \text{for all } k \geq 2 \text{ and } j \geq 1
        .
    \end{equation}

    We prove~\ref{prop:Stail:pol} first.
    By~\eqref{eq:Xtail}, using $\beta' \in (0, \beta]$ and $\beta > 1$,
    \begin{align*}
        \bP(X_k \geq j)
        & \leq \bE \min\{C j^{-\beta} X_1, 1\}
        \leq C j^{-\beta} \sum_{\ell=1}^{C j^\beta} \bP(X_1 \geq \ell)
        \\
        & \leq C C'_\beta j^{-\beta} \sum_{\ell=1}^{C j^\beta} \ell^{-\beta'}
        \leq C C'_\beta j^{-\beta}
        \begin{cases}
            1, & \beta' > 1
            \\
            \log j, & \beta' = 1
            \\
            j^{\beta(-\beta' + 1)}, & \beta' < 1
        \end{cases}
        \\
        & \leq C C'_\beta j^{-\beta'}
        .
    \end{align*}
    Thus $\bP(X_k \geq j) \leq C C'_\beta j^{-\beta'}$ for all $k \geq 1$ and $j \geq 1$.
    Hence
    \begin{equation}
        \label{eq:777}
        \begin{aligned}
            \bP(S \geq n)
            & = \sum_{t=1}^{\infty} \bP(\tau = t) \bP(X_1 + \cdots + X_t \geq n)
            \\ 
            & \leq \sum_{t=1}^{\infty} (1-\theta)^{t-1} \theta 
            \bigl[ \bP(X_1 \geq n/t) + \cdots + \bP(X_t \geq n/t) \bigr]
            \\ 
            & \leq C C'_\beta \sum_{t=1}^{\infty} (1-\theta)^{t-1} \theta
            t^{1+\beta'} n^{-\beta'}
            \leq C C'_\beta n^{-\beta'}
            ,
        \end{aligned}
    \end{equation}
    as required.
    
    Now we prove~\ref{prop:Stail:exp}.
    Let $C_r = \sum_{\ell=1}^\infty r(\ell)$; note that $\bP(X_1 \geq j) \leq r(j - n_0)$ for $j \geq n_0$,
    and that $\bE X_1 \leq C_r$.
    Taking expectation of both sides in~\eqref{eq:Xtail},
    we obtain
    \[
        \bP(X_k \geq j)
        \leq C j^{-\beta} \bE X_1
        \leq C C_r j^{-\beta}
        \qquad \text{for all } k \geq 2
        .
    \]
    Similar to~\eqref{eq:777}, we have
    $\bP(S - X_1 \geq n) \leq C C_r n^{-\beta}$.
    The result follows from $\bP(S \geq n) \leq \bP(X_1 \geq n/2) + \bP(S - X_1 \geq n/2)$.
\end{proof}

\begin{cor}
    \label{cor:Stail}
    Suppose that $T_1, T_2, \ldots \in \cT$ and $h(\ell) \leq C_\beta \ell^{-\beta}$
    with $\beta > 1$. Let $\mu$ and $\nu$ be probability measures
    on $Y$ with $|\mu|_{\LL}, |\nu|_{\LL} \leq K_2$. Then
    \[
        \bigl| (T_{1,k+n})_* \mu - (T_{k,k+n})_* \nu \bigr|
        \leq C \min \{ k n^{-\beta}, n^{-\beta + 1} \}
        ,
    \]
    where $C$ is a constant which depends only on $C_\beta$, $\beta$, $K_2$, $K_1$
    and $K$, $\diam X$, $\lambda$, $n_0$, $\delta_0$.
\end{cor}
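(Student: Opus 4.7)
The plan is to reduce the claim to Remark~\ref{rmk:decdec} applied to the \emph{shifted} sequence of maps. Since $T_{1,k+n} = T_{k,k+n} \circ T_{1,k-1}$, setting $\mu_1 = (T_{1,k-1})_* \mu$, we have
\[
    (T_{1,k+n})_* \mu - (T_{k,k+n})_* \nu
    = (T_{k,k+n})_* (\mu_1 - \nu).
\]
With the shifted sequence $\widetilde T_j = T_{k+j-1}$ (which again lies in $\cT$ and satisfies the standing assumptions with the same constants), $T_{k,k+n}$ coincides with $\widetilde T_{1,n+1}$, so it suffices to bound $|(\widetilde T_{1,n+1})_* (\mu_1 - \nu)|$ using the sequence $\widetilde T$.

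Next I establish that both $\mu_1$ and $\nu$ are regular with respect to $\widetilde T$, with quantitative tail bounds. The remark following the definition of regularity gives that $\nu$, being supported on $Y$ with $|\nu|_{\LL} \leq K_2$, is regular with tail bound $e^{K_2 \diam Y} h(\ell) \leq C \ell^{-\beta}$. The same remark applied to $\mu$, followed by Corollary~\ref{cor:const-tail} with $n = k-1$, yields that $\mu_1$ is regular with tail bound
\[
    r_1(\ell)
    \leq C (k-1+\ell)^{-\beta} + C_h C_\beta \sum_{j=0}^{k-1} (j+\ell)^{-\beta}
    \leq C \min\bigl\{k \ell^{-\beta},\ \ell^{-\beta+1}\bigr\},
\]
where the last inequality combines the two crude estimates $\sum_{j=0}^{k-1}(j+\ell)^{-\beta} \leq k \ell^{-\beta}$ and $\sum_{j=0}^{k-1}(j+\ell)^{-\beta} \leq \sum_{j \geq \ell} j^{-\beta} \leq C \ell^{-\beta+1}$.

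Since $r_1$ also dominates the tail of $\nu$, I apply Theorem~\ref{thm:decdec} to each of the two measures \emph{using the same function} $r = r_1$. Because the theorem's weights $\alpha_n$ are determined by $h$ and $r$ alone, the two decompositions share a common sequence $\alpha_n$, which is the precise hypothesis under which Remark~\ref{rmk:decdec} applies. Applying the first bullet of Theorem~\ref{thm:decdec} to $r_1$ twice---once with $(\beta', C'_\beta) = (\beta-1, C)$ and once with $(\beta', C'_\beta) = (\beta, Ck)$---gives the \emph{same-sequence} bound
\[
    \sum_{j \geq n} \alpha_j \leq C \min\bigl\{k n^{-\beta},\ n^{-\beta+1}\bigr\},
\]
and Remark~\ref{rmk:decdec} then bounds $|(\widetilde T_{1,n+1})_* (\mu_1 - \nu)|$ by $2 \sum_{j > n+1} \alpha_j$, which is the asserted rate. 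The main subtlety is producing this common decomposition: one must inflate the tail bound on $\nu$ up to $r_1$ so that a single sequence $\alpha_n$ serves both measures simultaneously, and then extract both regimes $k n^{-\beta}$ and $n^{-\beta+1}$ from the same $\sum \alpha_j$ by using the two available bounds on $r_1$. Everything else---the identity relating $T_{1,k+n}$ and $T_{k,k+n}$, the tail computation for $\mu_1$, and the final triangle-type step via Remark~\ref{rmk:decdec}---is bookkeeping.
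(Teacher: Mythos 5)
Your proof is correct and follows essentially the same route as the paper's: both reduce to the shifted sequence $T_k, T_{k+1},\ldots$, establish that $(T_{1,k-1})_*\mu$ is regular with tail bound $C\min\{k\ell^{-\beta},\ell^{-\beta+1}\}$ (you via Corollary~\ref{cor:const-tail}, the paper directly via Proposition~\ref{prop:onedec}), and then invoke the tail-decay machinery while using a common tail bound so that $\mu_1$ and $\nu$ share a single coupling sequence $\alpha_n$ (you via Theorem~\ref{thm:decdec} and Remark~\ref{rmk:decdec}, the paper via Proposition~\ref{prop:Stail}). You have simply unpacked the bookkeeping that the paper leaves implicit.
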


\begin{proof}
    By Proposition~\ref{prop:onedec}, $(T_{1,k})_* m$ is a regular measure with
    tail bound $h_k$. It is a direct verification that
    $h_k(\ell) \leq C \ell^{-\beta + 1}$ and
    $h_k(\ell) \leq C k \ell^{-\beta}$.
    Now apply Proposition~\ref{prop:Stail}.
\end{proof}

One last thing for us to prove is the bound for (stretched) exponential tails.

\begin{prop}
    \label{prop:Stail-exp}
    Suppose that $\mu$ is as in Theorem~\ref{thm:decdec}, $T_1, T_2, \ldots \in \cT$
    and that $h(n), r(n) \leq C_\beta \exp( - C_\beta' n^{\beta} )$ with $\beta \in (0,1]$
    and $C_\beta, C_\beta' > 0$.
    Then
    \[
        \bP(S \geq n)
        \leq C \exp( - C' n^{\beta} )
        ,
    \]
    where $C > 0$ and $C' \in (0, C_\beta')$ depend only on
    $n_0$, $\theta$, $C_h$, $C_\beta$, $C_\beta'$ and $\beta$.
\end{prop}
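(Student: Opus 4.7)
The plan is to establish a stretched-exponential moment generating function bound for $S$, exploiting the crucial subadditivity $(a+b)^\beta \leq a^\beta + b^\beta$ valid for $\beta \in (0,1]$, and then apply a Chernoff/Markov argument to $\exp(s S^\beta)$.

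First I would obtain a stretched-exponential conditional tail bound on each $X_j$ that is \emph{uniform} in the conditioning. The key elementary observation is that for $\beta \in (0,1]$ and $j, \ell \geq 0$,
\[
    (j+\ell)^\beta \geq \max(j^\beta, \ell^\beta) \geq \tfrac12(j^\beta + \ell^\beta).
\]
Hence
\[
    h_n(\ell)
    = C_h \sum_{j=0}^n h(j+\ell)
    \leq C_h C_\beta \exp(-\tfrac12 C_\beta' \ell^\beta) \sum_{j=0}^\infty \exp(-\tfrac12 C_\beta' j^\beta)
    \leq A \exp(-B \ell^\beta),
\]
with $A, B > 0$ depending only on $C_\beta$, $C_\beta'$, $\beta$, $C_h$ and \emph{independent} of $n$. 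Since $\hh_k \leq h_k$ and $\hr \leq r$, and since the shift $\ell \mapsto \ell - n_0$ can be absorbed into the constants (using $\bP \leq 1$ for $\ell \leq 2n_0$ and $\ell - n_0 \geq \ell/2$ for $\ell \geq 2n_0$), there exist $A_1 > 0$ and $B_1 \in (0, C_\beta')$ such that
\[
    \bP(X_1 \geq \ell) \leq A_1 e^{-B_1 \ell^\beta},
    \qquad
    \bP(X_j \geq \ell \mid X_1, \ldots, X_{j-1}) \leq A_1 e^{-B_1 \ell^\beta}
    \quad (j \geq 2),
\]
uniformly in the conditioning.

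Next I would bound the conditional stretched-exponential moment. For $0 < s < B_1$, integration by parts gives
\[
    \bE[\exp(s X_j^\beta) \mid X_1, \ldots, X_{j-1}]
    = 1 + s \int_0^\infty e^{su} \bP(X_j^\beta > u \mid X_1, \ldots, X_{j-1}) \, du
    \leq 1 + \frac{s A_1}{B_1 - s}
    =: M(s),
\]
with $M(s) \to 1$ as $s \to 0^+$. Iterating the tower property yields $\bE \exp(s (X_1^\beta + \cdots + X_t^\beta)) \leq M(s)^t$ for every $t \geq 1$.

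Finally, the subadditivity $(X_1 + \cdots + X_\tau)^\beta \leq X_1^\beta + \cdots + X_\tau^\beta$ and the independence of $\tau$ from $(X_j)$ give
\[
    \bE \exp(s S^\beta)
    \leq \sum_{t=1}^\infty \bP(\tau = t) \, M(s)^t
    = \theta M(s) \sum_{t=0}^\infty \bigl((1-\theta) M(s)\bigr)^{t}.
\]
Choosing $s > 0$ small enough that $(1-\theta) M(s) < 1$ (possible since $M(s) \to 1$), the series is bounded by a finite constant $C$; Markov's inequality then produces $\bP(S \geq n) \leq C \exp(-s n^\beta)$, and we set $C' := s \in (0, B_1) \subset (0, C_\beta')$. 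The main obstacle is the uniform bound in the first step, where the inequality $(j+\ell)^\beta \geq (j^\beta+\ell^\beta)/2$ is essential; the hypothesis $\beta \leq 1$ enters twice — once to decouple this sum in $j$, and again to bound $S^\beta$ by the sum of the $X_j^\beta$ in the final Chernoff step — and both uses fail for $\beta > 1$.
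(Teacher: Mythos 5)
Your proof is correct and follows essentially the same route as the paper, which establishes the uniform bound $h_k(\ell) \leq C\exp(-C'\ell^\beta)$ exactly as you do and then defers the remaining Chernoff argument to \cite[Propositions~4.11--4.12]{KKM19}. Your write-up simply fills in that deferred step, making explicit the two uses of $(a+b)^\beta \leq a^\beta + b^\beta$ for $\beta \in (0,1]$ and the uniform conditional bound on $\bE\bigl[\exp(sX_j^\beta)\mid X_1,\ldots,X_{j-1}\bigr]$.
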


\begin{proof}
    We give a sketch of the proof.   
    We use $C, C'$ to denote various constants which, as in the statement,
    depend only on $n_0$, $\theta$, $C_h$, $C_\beta$, $C_\beta'$ and $\beta$.
    Note that 
    \[
        h_k(\ell)
        \leq C \sum_{j=0}^{\infty} \exp( - C_\beta' (\ell + j)^{\beta} )
        \leq C \exp( - C' \ell^{\beta} )
        ,
    \]
    and thus
    \[
        \bP(X_k \geq \ell \mid X_1, \ldots, X_{k-1})
        \leq C \exp( - C' \ell^{\beta} )
        .
    \]
    Now the result follows as in~\cite[Propositions~4.11 and~4.12]{KKM19}.
\end{proof}


\section{Proof of Theorem~\ref{thm:mom}}
\label{sec:mom:new}

Throughout this section, $C$ denotes various constants which depend only on
$\beta$, $C_\beta$, $\lambda$, $K$, $K_1$, $K_2$, $\delta_0$, $n_0$ and $\diam X$.
We work on the probability space $(X, \mu)$, and $\bE$ denotes the expectation
with respect to $\mu$.

Overall,
our strategy is to construct a filtration $\cB_n$ (based on symbolic itinerary),
approximate $H$ with the Doob martingale $\tH_n = \bE(H \mid \cB_n)$. Then we bound
the quadratic variation of $\tH_n$ and use Burkholder inequality.

\subsection{Filtration and martingale}
For $n \geq 0$, let $\cP_n$ denote the partition of $X$ corresponding to the sequence of maps
$T_{n+1}, T_{n+2}, \ldots$ as in Section~\ref{sec:results}.
To each $x \in X$ there corresponds a symbolic
itinerary $a_0, a_1, \ldots$ with $a_n \in \cP_n$ and $T_{1,n}(x) \in a_n$.
Let $\cB_n$ denote the sigma-algebra generated by $a_0, \ldots, a_n$, i.e.\ by sets of the type
$\{ x \in X : T_{1,k}(x) \in a_k \text{ for } 0 \leq k \leq n \}$.
Let $\cB_{-1} = \{\emptyset, X\}$ be the trivial sigma-algebra.

Let
\[
    \tH_n
    = \bE(H \mid \cB_n)
    .
\]
Then $\tH_n$ is a (Doob) martingale.
Note that $\tH_{-1} = 0$.
Let $\tX_{-1} = 0$ and for $n \geq 0$,
\[
    \tX_n = \tH_n - \tH_{n-1}
    .
\]

\begin{rmk}
    In Theorem~\ref{thm:mom} we assumed that $H$ is continuous on $X^\bN$ with
    respect to the product topology. Since returns to $Y$ are backward contracting,
    this guarantees that $\tH_n \to H$ pointwise.
\end{rmk}

To estimate the increments $\tX_n$ we use some auxiliary random variables.
For $x \in X$, define the sequence of return times to $Y$ by
$r_{-1}(x) = 0$ and
\begin{align*}
    r_k(x)
    = \inf \{ \ell > r_{k-1}(x) \: : \: T_{1,\ell}(x) \in Y \}
    \quad \text{for } k \ge 0
    .
\end{align*}
Define lap numbers
\begin{align*}
    L_k(x) = \# \{ 1 \leq \ell \leq k \: : \: T_{1,\ell}(x) \in Y \}. 
\end{align*}
Then $r_{L_k - 1} \leq k < r_{L_k}$.
Observe that $L_k$ and $r_{L_k}$ are $\cB_k$-measurable.
Denote
\begin{align*}
    \varkappa_k & = r_{L_k}
    , \\
    \tau_k & = r_k - r_{k-1}
    , \\
    \htau_k & = \sum_{r_{k-1} \leq j < r_k} \Lip_j (H)
    .
\end{align*}

\subsection{Martingale increments}

Throughout this subsection, we fix a symbolic itinerary $a_0, a_1, \ldots$
and let $A_{-1} = X$ and for $n \geq 0$,
\[
    A_n
    = \{ x \in X : T_{1,k}(x) \in a_k \text{ for all } k \leq n \}
    .
\]

\begin{prop}
    \label{prop:back-cont}
    For all $x,x' \in A_{n-1}$,
    \[
        \sum_{k < \varkappa_n(A_n)} \Lip_k (H) d(T_{1,k}(x), T_{1,k}(x'))
        \leq C \sum_{\ell \leq L_n(A_n)} \htau_{\ell}(A_n) \lambda^{-(L_n(A_n) - \ell)}
        .
    \]
\end{prop}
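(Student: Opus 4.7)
The plan is to split the sum on the left-hand side into contributions from the successive laps $[r_{\ell-1}, r_\ell)$, $\ell = 1, \ldots, L := L_n(A_n)$, between consecutive returns to $Y$ along the fixed itinerary of $A_n$, and to control each lap using the backward contraction assumption (NU:4) together with the expansion assumption (NU:3) of the first return map.

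For a fixed lap $\ell \leq L$, I would invoke (NU:4) applied to the shifted dynamics $T_{r_{\ell-1}+1}, T_{r_{\ell-1}+2}, \ldots$, whose first return to $Y$ from the common cylinder containing $T_{1, r_{\ell-1}}(x)$ is exactly $T_{1, r_\ell}$; this gives
\[
    \max_{r_{\ell-1} \leq j \leq r_\ell} d(T_{1,j}(x), T_{1,j}(x'))
    \leq K \, d(T_{1, r_\ell}(x), T_{1, r_\ell}(x')).
\]
Iterating the expansion bound (NU:3) $L - \ell$ times yields
\[
    d(T_{1, r_\ell}(x), T_{1, r_\ell}(x'))
    \leq \lambda^{-(L-\ell)} d(T_{1, r_L}(x), T_{1, r_L}(x'))
    \leq \lambda^{-(L-\ell)} \diam X,
\]
so that the maximum of $d(T_{1,j}(x), T_{1,j}(x'))$ across lap $\ell$ is at most $K \diam X \cdot \lambda^{-(L-\ell)}$.

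Summing the $\Lip_k(H)$-weights over lap $\ell$ and using $\htau_\ell(A_n) = \sum_{r_{\ell-1} \leq j < r_\ell} \Lip_j(H)$, the $\ell$-th block contributes at most $K \diam X \cdot \htau_\ell(A_n) \lambda^{-(L-\ell)}$. Aggregating over $\ell$ delivers the claim with $C = K \diam X$; any initial segment $[0, r_0)$ preceding the first return is handled identically by applying (NU:4) to the original (unshifted) dynamics, using $r_{-1} = 0$.

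The principal subtlety is the hypothesis $x, x' \in A_{n-1}$ rather than $A_n$: the cylinder identity underlying (NU:4) is automatic only for laps fully contained in $[0, n-1]$, whereas the terminal lap $[r_{L-1}, r_L)$ straddles time $n$, so $x$ and $x'$ may occupy distinct elements of $\cP_n$ and the honest backward contraction from $T_{1, r_L}$ is unavailable. I expect to resolve this by falling back on the trivial bound $d(\cdot,\cdot) \leq \diam X$ for this terminal lap, which is precisely the $\ell = L$ term carrying coefficient $\lambda^{-(L-\ell)} = 1$; this fits into the asserted inequality at the cost of a harmless enlargement of $C$. For $\ell \leq L - 1$ one has $r_{\ell - 1} < r_{L-1} \leq n$, hence $r_{\ell - 1} \leq n - 1$, and the contraction step through the cylinder $\cP_{r_{\ell-1}}$ is legitimate for $x, x' \in A_{n-1}$.
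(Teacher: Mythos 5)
Your proposal is correct and follows essentially the same route as the paper: decompose the sum into laps between returns to $Y$, bound each lap via the backward-expansion control (NU:4) at the start of the lap combined with iterated expansion (NU:3) at subsequent returns, and absorb the terminal lap (where $x,x'\in A_{n-1}$ need not share a cylinder of $\cP_n$) into the constant. The only cosmetic caveat is that for $\ell\le L-1$ the chain of (NU:3) expansions should stop at $r_{L-1}$ rather than $r_L$ (since the common-cylinder hypothesis may fail at time $r_{L-1}=n$), which costs one extra factor of $\lambda$ in $C$ — precisely what the paper encodes in the inequality $L_n(A_n)\le L_{n-1}(A_{n-1})+1$.
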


\begin{proof}
    Suppose that $x,x' \in A_{n-1}$.
    By backward contraction of at least $\lambda$ at returns to $Y$
    and using $L_n(A_n) \leq L_{n-1}(A_{n-1}) + 1$
    and $L_j(A_n) = L_j(A_{n-1})$ for $j < n$,
    \[
        d(T_{1,j}(x), T_{1,j}(x'))
        \leq C \lambda^{-(L_n(A_n) - L_j(A_n))}
        \quad \text{for } j < \varkappa_n(A_n)
        .
    \]
    Hence
    \begin{align*}
        \sum_{j < \varkappa_n(A_n)} \Lip_j(H) d(T_{1,j}(x), T_{1,j}(x'))
         & = \sum_{\ell=0}^{L_n(A_n)} \sum_{j=r_{\ell-1}}^{r_\ell(A_n) - 1}
         \Lip_j(H) d(T_{1,j}(x), T_{1,j}(x'))
         \\
         & \leq \sum_{\ell=0}^{L_n(A_n)} \htau_{\ell} (A_n) \lambda^{- (L_n(A_n) - \ell)}
         .
     \end{align*}
\end{proof}
    
Let $\Theta \colon X \times X \to \{0,1,2,\ldots \} \cup \{\infty\}$,
\[
    \Theta(x, x')
    = \inf \{k \geq 0 : T_{1,k}(x) = T_{1,k}(x') \}
    .
\]

\begin{lem}
    \label{lem:step-coupling}
    Let $n \geq 0$ and let $\mu_{A_{n-1}}$ and $\mu_{A_n}$ be
    the restrictions of $\mu$ on respective sets,
    normalized to probability.
    Then there exists a probability measure $\tmu$ on $X \times X$
    with marginals $\mu_{A_{n-1}}$ and $\mu_{A_n}$ such that
    for $\ell \geq 1$,
    \[
        \tmu( \Theta \geq \varkappa_n + \ell )
        \leq C
        \begin{cases}
            \ell^{-\beta + 1}, & n = 0,
            \\
            \min \{ \tau_{L_n}(A_n) \ell^{-\beta}, \ell^{-\beta + 1} \},
                               & n > 0 \text{ and } a_n \subset Y,
            \\
            0, & \text{else}.
        \end{cases}
    \]
\end{lem}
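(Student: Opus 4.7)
The proof splits into three cases matching the trichotomy in the statement.

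For the ``else'' case ($n > 0$ with $a_n \not\subset Y$), I would first show that $A_n = A_{n-1}$ modulo $\mu$-null sets. Since $a_n \subset X \setminus Y$, the return-time / waiting structure of $\cP_n$ forces $a_n$ to be the unique cell of $\cP_n$ intersecting $T_n(a_{n-1})$: if $a_{n-1} \subset Y$ with $\tau(a_{n-1}) = 1$, then $T_n(a_{n-1}) \subset Y$ and we would have $a_n \subset Y$, contradicting the case assumption, while in every other configuration the continuation of $a_{n-1}$ is deterministic until the next return. Thus $\mu_{A_{n-1}} = \mu_{A_n}$, and the diagonal coupling $\tmu = (x \mapsto (x,x))_* \mu_{A_n}$ gives $\Theta \equiv 0$.

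In the nontrivial cases ($n = 0$, or $n > 0$ with $a_n \subset Y$), my plan is to push both measures to time $\varkappa_n$, couple the two pushforwards via Corollary~\ref{cor:decdec} applied to the shifted sequence $T_{\varkappa_n + 1}, T_{\varkappa_n + 2}, \ldots$, and then lift back to time $0$ by disintegrating along $T_{1,\varkappa_n}$, exactly as in the proof of Corollary~\ref{cor:decdec}. Iterating Proposition~\ref{prop:K} through the $L_n + 1$ returns of $A_n$ up to time $\varkappa_n$, the measure $(T_{1,\varkappa_n})_* \mu_{A_n}$ is a probability on $Y$ with $|\cdot|_{\LL} \le K_1 \le K_2$, hence regular with tail $\le C \ell^{-\beta}$.

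The key observation for the other measure is that when $a_n \subset Y$ one necessarily has $\tau(a_{n-1}) = 1$ (else $T_n(a_{n-1}) \cap Y = \emptyset$, which would force $a_n \not\subset Y$), so time $n$ is a return time for all of $A_{n-1}$. Iterating Proposition~\ref{prop:K} and using Remark~\ref{rmk:on-Y} to combine the finitely many Markov sub-cells of $A_{n-1}$ that become refined at time $n$, one obtains $(T_{1,n})_*\mu_{A_{n-1}}$ as a probability on $Y$ with $|\cdot|_{\LL} \le K_1 \le K_2$. Applying Corollary~\ref{cor:const-tail} to the shifted sequence for $\tau(a_n) = \varkappa_n - n = \tau_{L_n}(A_n)$ further steps then yields that $(T_{1,\varkappa_n})_*\mu_{A_{n-1}}$ is regular with tail bound $\le h_{\tau(a_n)}(\ell) \le C\min\{\tau(a_n)\ell^{-\beta},\, \ell^{-\beta+1}\}$. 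The case $n = 0$ is analogous, using Corollary~\ref{cor:const-tail} applied directly to $\mu$, which is regular with tail $C\ell^{-\beta+1}$ by the hypothesis of Theorem~\ref{thm:mom}. Corollary~\ref{cor:decdec}, combined with the first bullet of Theorem~\ref{thm:decdec} applied with $\beta' = \beta$ and with $\beta' = \beta - 1$ separately and taking the smaller of the two resulting bounds, then produces a coupling $\tilde\nu$ with coupling time $\Theta'$ satisfying $\tilde\nu(\Theta' \ge \ell) \le C \min\{\tau(a_n)\ell^{-\beta},\, \ell^{-\beta+1}\}$ (respectively $C\ell^{-\beta+1}$ when $n = 0$). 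Lifting yields $\Theta(x,x') \le \varkappa_n + \Theta'(T_{1,\varkappa_n}(x), T_{1,\varkappa_n}(x'))$ almost surely, and the stated bound on $\tmu(\Theta \ge \varkappa_n + \ell)$ follows.

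The main obstacle will be the log-Lipschitz control on $(T_{1,n})_*\mu_{A_{n-1}}$ in the $a_n \subset Y$ case: since $A_{n-1}$ decomposes into several sub-cells of the partition refined at time $n$, iterating Proposition~\ref{prop:K} requires combining several Markov branches via Remark~\ref{rmk:on-Y}, and the bound $K_1$ must be preserved through each return in order for the shifted application of Corollary~\ref{cor:const-tail} to yield the crucial $h_{\tau(a_n)}$ tail estimate.
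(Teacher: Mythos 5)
Your approach is essentially the same as the paper's: handle the degenerate case $a_n\not\subset Y$ via $A_n=A_{n-1}$, then in the nontrivial cases push both $\mu_{A_{n-1}}$ and $\mu_{A_n}$ to time $\varkappa_n$, where both are regular with the appropriate tail bound, apply Theorem~\ref{thm:decdec} to get matching decompositions, couple via the Shortt argument from Corollary~\ref{cor:decdec}, and lift back. Your final worry is unnecessary, though: $A_{n-1}$ is a single cylinder in the iterated return-map partition, so $\bigl|(T_{1,n})_*\mu_{A_{n-1}}\bigr|_\LL\le K_1$ follows directly from regularity of $\mu$ and iterating Proposition~\ref{prop:K} — no combination of branches via Remark~\ref{rmk:on-Y} is required.
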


\begin{proof}
    First we assume that $n > 0$.
    Observe that if $a_n \subset X \setminus Y$, then $A_n$ = $A_{n-1}$
    and the result is clear. Suppose that $a_n \subset Y$. Note that then
    $\varkappa_n = n + \tau_{L_n}$.

    Since $\mu$ is regular, $(T_{1,n})_* \mu_{A_{n-1}}$ is supported on $Y$
    with $|(T_{1,n})_* \mu_{A_{n-1}}|_{\LL} \leq K_1$,
    and similarly $|(T_{1,\varkappa_n})_* \mu_{A_n}|_{\LL} \leq K_1$.

    Let $\mu' = (T_{1, \varkappa_n})_* \mu_{A_{n-1}}$ and
    $\mu'' = (T_{1, \varkappa_n})_* \mu_{A_n}$.
    By Remark~\ref{rmk:forward-tails}, both $\mu'$ and $\mu''$ are regular
    with tail bound $C \min\{ \tau_{L_n} \ell^{-\beta}, \ell^{-\beta + 1} \}$.
    By Theorem~\ref{thm:decdec}, there exist decompositions
    \begin{equation}
        \label{eq:co-decomp}
        \mu'  = \sum_{\ell \geq 1} \alpha_\ell \mu'_\ell
        \qquad \text{and} \qquad
        \mu'' = \sum_{\ell \geq 1} \alpha_\ell \mu''_\ell
    \end{equation}
    such that
    $(T_{\varkappa_n + 1, \varkappa_n + \ell})_* \mu'_\ell
    = (T_{\varkappa_n + 1, \varkappa_n + \ell})_* \mu''_\ell = m$
    and
    $\sum_{k \geq \ell} \alpha_k \leq C \min\{ \tau_{L_n} \ell^{-\beta}, \ell^{-\beta + 1} \}$.

    Write
    \[
        \mu_{A_{n-1}} = \sum_{\ell \geq 1} \alpha_\ell \mu_{A_{n-1},\ell}
        \qquad \text{and} \qquad
        \mu_{A_n}     = \sum_{\ell \geq 1} \alpha_\ell \mu_{A_n    ,\ell}
        ,
    \]
    where $(T_{1,\varkappa_n})_* \mu_{A_{n-1}, \ell} = \mu'_\ell$
    and   $(T_{1,\varkappa_n})_* \mu_{A_n,     \ell} = \mu''_\ell$.
   
    As in the proof of Corollary~\ref{cor:decdec}, for each $\ell$
    there is a probability measure $\tmu_\ell$ on $X \times X$
    with marginals $\mu_{A_{n-1}, \ell}$ and $\mu_{A_n, \ell}$,
    such that $\tmu_\ell(\Theta \leq \varkappa_n + \ell) = 1$.

    Let $\tmu = \sum_{\ell \geq 1} \alpha_\ell \tmu_\ell$. Then
    the marginals of $\tmu$ are $\mu_{A_{n-1}}$ and $\mu_{A_n}$,
    and 
    \[
        \tmu(\Theta \geq \varkappa_n + \ell)
        \leq \sum_{k \geq \ell} \alpha_k
        \leq C \min\{ \tau_{L_n} \ell^{-\beta}, \ell^{-\beta + 1} \}
        ,
    \]
    as required.

    It remains to treat the case $n = 0$. The proof is similar to above,
    only now $\mu_{A_{n-1}} = \mu$ and by Remark~\ref{rmk:forward-tails},
    both $\mu'$ and $\mu''$ are regular with tail bound $C \ell^{-\beta + 1}$.
    Thus we have the decomposition~\eqref{eq:co-decomp} with
    $\sum_{k \geq \ell} \alpha_k \leq C \ell^{-\beta + 1}$.
    The rest of the proof is unchanged.
\end{proof}

In order to bound $\tX_n$, we define random variables $I_n$ and $J_n$ by
\[
    I_n
    = \sum_{\ell \leq n} \htau_{\ell} \lambda^{-(n - \ell)}
\]
and
\begin{align*}
    J_n
    & = \sum_{j \geq 1} \Lip_{r_n + j - 1}(H)
    \min \bigl\{j^{-\beta + 1}, \tau_n j^{-\beta} \bigr\}
    \quad \text{for } n \geq 1,
    \\
    J_0
    & = \sum_{j \geq 1} \Lip_{r_0 + j - 1}(H) j^{-\beta + 1}
    .
\end{align*}

\begin{prop}
    \label{prop:tX}
    \[
        | \tX_n |
        \leq 
        \begin{cases}
            C ( I_{L_n} + J_{L_n} ), & n \in \{r_k\}_{k \geq -1},
            \\
            0, & \text{else}.
        \end{cases}
    \]
\end{prop}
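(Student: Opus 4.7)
The plan is to split the argument according to whether $n$ belongs to $\{r_k\}_{k \geq -1}$, and, in the nontrivial case, to express $\tX_n$ via the coupling from Lemma~\ref{lem:step-coupling}.

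Consider first $n \notin \{r_k\}_{k \geq -1}$. Then $n \geq 1$ and $a_n \subset X \setminus Y$. As already observed in the proof of Lemma~\ref{lem:step-coupling}, the partition structure forces $A_n = A_{n-1}$ (the image of $a_{n-1}$ under $T_n$ lies in a single non-$Y$ atom of $\cP_n$, with entry time one less than that of $a_{n-1}$). Hence $\tH_n = \tH_{n-1}$ pointwise on $A_{n-1}$, so $\tX_n = 0$.

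Now suppose $n \in \{r_k\}_{k \geq -1}$. Let $\tmu$ be a coupling of the normalised restrictions $\mu_{A_{n-1}}$ and $\mu_{A_n}$ as in Lemma~\ref{lem:step-coupling}, so that
\[
    \tX_n = \int_{X \times X} \bigl( H(x) - H(x') \bigr) \, d\tmu(x, x').
\]
By the separately Lipschitz property,
\[
    |H(x) - H(x')| \leq \sum_{k \geq 0} \Lip_k(H) \, d(T_{1,k}(x), T_{1,k}(x')),
\]
and I would split this sum at $\varkappa_n$. For $k < \varkappa_n$, Proposition~\ref{prop:back-cont} (applied to $x, x' \in A_{n-1}$, using $A_n \subset A_{n-1}$) yields the deterministic estimate $C I_{L_n}$. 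For $k \geq \varkappa_n$, note that $d(T_{1,k}(x), T_{1,k}(x'))$ vanishes once $k \geq \Theta(x,x')$ and is at most $\diam X$ otherwise, so after integrating against $\tmu$ one obtains a tail estimate
\[
    \bE_\tmu \sum_{k \geq \varkappa_n} \Lip_k(H) \, d(T_{1,k}(x), T_{1,k}(x')) \leq \diam X \sum_{j \geq 1} \Lip_{\varkappa_n + j - 1}(H) \, \tmu(\Theta \geq \varkappa_n + j).
\]
Inserting the tail bound from Lemma~\ref{lem:step-coupling}, namely $C \min\{\tau_{L_n} j^{-\beta}, j^{-\beta + 1}\}$ for $n \geq 1$ and $C j^{-\beta + 1}$ for $n = 0$, and using $\varkappa_n = r_{L_n}$, one recognises exactly $C J_{L_n}$ (the $n = 0$ sub-case matching the separate definition of $J_0$).

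I expect the main obstacle to be bookkeeping rather than mathematics. For $n = r_j$ one has $L_n = j + 1$ and $\varkappa_n = r_{j+1}$, so $\varkappa_n$ is the \emph{next} return after $n$, and $\tau_{L_n} = \tau(a_n)$ is $\cB_n$-measurable; these identifications must be tracked carefully for the coupling tail to line up with the definition of $J_{L_n}$. The vanishing case also relies on the partition compatibility already invoked in Lemma~\ref{lem:step-coupling}, which is taken as given.
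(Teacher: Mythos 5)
Your argument is correct and follows the paper's own proof essentially step for step: you use the same coupling $\tmu$ from Lemma~\ref{lem:step-coupling}, split the separately-Lipschitz sum at $\varkappa_n$, invoke Proposition~\ref{prop:back-cont} for the prefix to get $CI_{L_n}$, and integrate the tail against the $\Theta$-bound to get $CJ_{L_n}$; the paper's equations \eqref{eq:nol}--\eqref{eq:razdvatri} are exactly this. Your index bookkeeping ($L_{r_j}=j+1$, $\varkappa_{r_j}=r_{j+1}$, $\tau_{L_n}$ $\cB_n$-measurable) is also correct and worth making explicit, as the paper leaves it implicit.
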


\begin{proof}
    We bound $\tX_n(A_n)$ for $A_n \in \cB_n$ corresponding to the symbolic
    itinerary $a_0, \ldots, a_n$, as defined before.
    Let $\tmu$ be the measure on $X \times X$ from
    Lemma~\ref{lem:step-coupling}.
    For $(x,x') \in X \times X$, let $G(x,x') = (H(x), H(x'))$.
    Then
    \begin{equation}
        \label{eq:nol}
        \int G \, d\tmu
        = (\bE(H \mid A_{n-1}), \bE(H \mid A_n))
        .
    \end{equation}
    Let $x,x' \in A_{n-1}$; note that this holds for $\tmu$-almost every $(x, x')$.
    By Proposition~\ref{prop:back-cont},
    \\
    \begin{equation}
        \label{eq:raz}
        \sum_{k < \varkappa_n(A_n)} \Lip_k (H) d(T_{1,k}(x), T_{1,k}(x'))
        \leq C \sum_{\ell \leq L_n(A_n)} \htau_{\ell}(A_n) \lambda^{-(L_n(A_n) - \ell)}
        .
    \end{equation}
    From Lemma~\ref{lem:step-coupling},
    \begin{equation}
        \label{eq:razdvatri}
        \begin{aligned}
            & \sum_{k \geq \varkappa_n(A_n)}
            \int \Lip_k(H) d(T_{1,k}(x), T_{1,k}(x')) \, d\tmu(x,x')
            \leq C \sum_{k \geq \varkappa_n(A_n)} \Lip_k(H) \tmu (\Theta > k)
            \\
            & \leq C \sum_{j \geq 1} \Lip_{\varkappa_n(A_n) + j - 1}(H)
            \begin{cases}
                j^{-\beta + 1}, & n = 0,
                \\
                \min\{\tau_{L_n}(A_n) j^{-\beta}, j^{-\beta + 1} \}, & n > 0 \text{ and } a_n \subset Y,
                \\
                0, & \text{else}.
            \end{cases}
        \end{aligned}
    \end{equation}
    Recall that $\tX_n(A_n) = \bE(H \mid A_n) - \bE(H \mid A_{n-1})$.
    The combination of~\eqref{eq:nol},~\eqref{eq:raz} and~\eqref{eq:razdvatri}
    yields the desired bounds.
\end{proof}

\begin{rmk}
    \label{rmk:Itau}
    By Jensen's inequality,
    $I_n^2 \leq C \sum_{\ell \leq n} \htau_\ell^2 \lambda^{-(n-\ell)}$, so
    \[
        \sum_{n \geq 0} I_n^2
        \leq C \sum_{n \geq 0} \htau_\ell^2
        .
    \]
\end{rmk}

By Burkholder inequality (Theorem~\ref{thm:burk},~\ref{thm:burk:Lp} and~\ref{thm:burk:wLp}),
$\| H \|_p \leq C_p \bigl\| \sum_{n \geq 0} |\tX_n|^2 \bigr\|_p$
and
$\| H \|_{p, \infty} \leq C_p \bigl\| \sum_{n \geq 0} |\tX_n|^2 \bigr\|_{p, \infty}$
for each $p > 1$, with $C_p$ depending only on $p$.
Hence by Proposition~\ref{prop:tX} and Remark~\ref{rmk:Itau},
\begin{equation}
    \label{eq:Hp}
    \begin{aligned}
        \| H \|_p
        & \leq C C_p \Bigl\| \Bigl( \sum_{n \geq 0} \htau_n^2 \Bigr)^{1/2} \Bigr\|_p
        + C C_p \Bigl\| \Bigl( \sum_{n \geq 0} J_n \Bigr)^{1/2} \Bigr\|_p
        ,
        \\
        \| H \|_{p, \infty}
        & \leq C C_p \Bigl\| \Bigl( \sum_{n \geq 0} \htau_n^2 \Bigr)^{1/2} \Bigr\|_{p, \infty}
        + C C_p \Bigl\| \Bigl( \sum_{n \geq 0} J_n^2 \Bigr)^{1/2} \Bigr\|_{p, \infty}
        .
    \end{aligned}
\end{equation}
Since $\mu$ has tail bound $C \ell^{-\beta + 1}$ and returns to $Y$
are full branch Gibbs-Markov maps,
for all $n, \ell \geq 1$,
\begin{equation}
    \label{eq:tatu}
    \begin{aligned}
        \mu(\tau_0 \geq \ell)
        & \leq C \ell^{-\beta + 1}
        ,
        \\
        \mu(\tau_n \geq \ell \mid \tau_0, \ldots, \tau_{n-1})
        & \leq C \ell^{-\beta}
        .
    \end{aligned}
\end{equation}
We show separately, in Section~\ref{sec:mart}, that~\eqref{eq:tatu}
can be used to bound the right hand side of~\eqref{eq:Hp}
well enough to complete the proof of Theorem~\ref{thm:mom}.


\section{Quadratic variation}
\label{sec:mart}

In this section we bound quadratic variation for processes
driven by nonstationary renewal-like sequences with polynomial
renewal times, as those appearing in Section~\ref{sec:mom:new}.
The main results, Theorems~\ref{thm:mart} and~\ref{thm:omega},
and their proofs are an adaptation of the corresponding parts of~\cite{GM14},
with an improvement when $\beta = 2$.

This section is self-contained, and notation is unrelated to the rest
of the paper.

Let $a_n$, $n \geq 0$, be a nonnegative sequence.
(In notation of Section~\ref{sec:mom:new}, $a_n$ plays the role of $\Lip_n(H)$.)
Let $\tau_n$, $n \geq 0$ be a sequence of random variables with values in $\{1,2, \ldots\}$
such that with some $C_\tau > 0$ and $\beta > 1$:
\begin{align*}
    \bP(\tau_0 \geq \ell)
    & \leq C_\tau \ell^{-\beta + 1}
    \qquad \text{ for all } \ell \geq 1,
    \\
    \bP(\tau_n \geq \ell \mid \tau_0, \ldots, \tau_{n-1})
    & \leq C_\tau \ell^{-\beta}
    \qquad \text{ for all } n \geq 1 \text{ and } \ell \geq 1.
\end{align*}
Let $r_{-1} = 0$ and $r_n = \sum_{j \leq n} \tau_j$ for $n \geq 0$. Define
\[
    \sigma
    = \sum_{n \geq 0} (a_{r_{n-1}} + \cdots + a_{r_n - 1})^2
\]
and
\[
    \omega
    = \sum_{n \geq 0} \Bigl(
        \sum_{j \geq 1} a_{r_n + j - 1} \min\{ \tau_n j^{-\beta}, j^{-\beta + 1} \}
    \Bigr)^2
    .
\]

Recall the notation $\|\cdot\|_p$ and $\|\cdot\|_{p, \infty}$ as in~\eqref{eq:Lp-norms}.

\begin{thm}
    \label{thm:mart}
    There is a constant which depends only on $\beta$ and $C_\tau$
    such that:
    \begin{enumerate}[label=(\alph*)]
        \item If $\beta \in (1,2)$, then
            $ \displaystyle
                \| \sigma^{1/2} \|_{\beta, \infty}
                \leq C \Bigl( \sum_{n \geq 0} a_n^\beta \Bigr)^{1/\beta}
                .
            $
        \item If $\beta = 2$, then
            $ \displaystyle
                \| \sigma^{1/2} \|_2
                \leq C \Bigl( \sum_{n \geq 0} a_n^2 \bigl(1 + \log (n + 1)\bigr) \Bigr)^{1/2}
                .
            $
            In addition, for each $p > 2$,
            \[
                \| \sigma^{1/2} \|_p
                \leq C_p \Bigl( \sum_{n \geq 0} a_n^2 \bigl(1 + \log (n + 1)\bigr) \Bigr)^{1/2}
                + C_p \Bigl( \sum_{n \geq 0} a_n^2 \Bigr)^{1/p}
                \Bigl( \sum_{n \geq 0} a_n \Bigr)^{1 - 2/p}
                ,
            \]
            where $C_p$ depends only on $\beta$, $C_\tau$ and $p$.
        \item If $\beta > 2$, then
            $ \displaystyle
                \| \sigma^{1/2} \|_{2(\beta - 1)}
                \leq C \Bigl( \sum_{n \geq 0} a_n^2 \Bigr)^{1/2}
                .
            $
    \end{enumerate}
\end{thm}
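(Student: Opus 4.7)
The plan is to write $b_n = a_{r_{n-1}} + \cdots + a_{r_n - 1}$ so that $\sigma = \sum_{n \geq 0} b_n^2$, and to let $N(k)$ denote the (almost surely unique) index with $k \in [r_{N(k)-1}, r_{N(k)})$. The key pointwise reduction is Cauchy--Schwarz applied to each block, which gives $b_n^2 \leq \tau_n \sum_{r_{n-1} \leq k < r_n} a_k^2$, hence
\[
\sigma \leq \sum_{k \geq 0} a_k^2 \, \tau_{N(k)}.
\]
For the weak-$L^\beta$ bound when $\beta \in (1,2)$ I would additionally exploit the sequence-space embedding $\ell^\beta \subset \ell^2$ (valid since $\beta \leq 2$), which gives $\sigma^{1/2} \leq \bigl(\sum_n b_n^\beta\bigr)^{1/\beta}$, combined with the H\"older bound $b_n^\beta \leq \tau_n^{\beta - 1} \sum_{r_{n-1} \leq k < r_n} a_k^\beta$, to obtain $\sigma^{\beta/2} \leq \sum_{k \geq 0} a_k^\beta \, \tau_{N(k)}^{\beta - 1}$.

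The main probabilistic input is a uniform tail bound for the size-biased block length $\tau_{N(k)}$. Splitting on the value of $r_{n-1} \leq k$ and summing the per-block estimates $\bP(\tau_0 \geq \ell) \leq C \ell^{-\beta + 1}$ and $\bP(\tau_n \geq \ell) \leq C \ell^{-\beta}$ should yield, for $\ell \geq 1$,
\[
\bP(\tau_{N(k)} \geq \ell) \leq C \min\bigl\{ \ell^{-\beta + 1}, \, k \ell^{-\beta} \bigr\}.
\]
The first estimate is uniform in $k$ and reflects the extra factor of $\tau$ absorbed by size-biasing; the second refines it when $k$ is small.

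With these ingredients each case reduces to a Markov/Minkowski argument after truncating $\tau_{N(k)}$ at the right scale. For $\beta > 2$, the $\tau_{N(k)}$ lie uniformly in weak $L^{\beta - 1}$, and the $L^{\beta - 1}$ Lorentz quasi-triangle inequality (valid since $\beta - 1 > 1$) gives $\bigl\|\sum_k a_k^2 \tau_{N(k)}\bigr\|_{\beta - 1} \leq C \sum_k a_k^2$. For $\beta \in (1, 2)$, the analogous weak $L^1$ estimate on $\sum_k a_k^\beta \tau_{N(k)}^{\beta - 1}$ produces the weak $L^\beta$ bound for $\sigma^{1/2}$ by Markov on a truncated piece plus a union bound on the tail piece. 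For $\beta = 2$, the integral $\bE \tau_{N(k)}^{\beta - 1}$ is exactly divergent and truncating at level $n$ generates the sharp logarithmic factor $1 + \log(n+1)$. The strong $L^p$ bound for $p > 2$ at $\beta = 2$ is obtained from a secondary split: the bulk contribution comes from the $L^2$-log estimate, and the extreme-block contribution is controlled by the crude pointwise bound $\sigma^{1/2} \leq \sum_n b_n = \sum_n a_n$, producing the mixed $(\sum a_n^2)^{1/p}(\sum a_n)^{1-2/p}$ term.

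The hard part is the borderline case $\beta = 2$: moments of $\tau_{N(k)}$ sit exactly at the edge of divergence, so the logarithmic correction must be extracted without loss. The $L^p$ splitting for $p > 2$ is the most delicate quantitative step, since matching the exponents $1/p$ and $1 - 2/p$ in the mixed $L^p$--$\ell^2$--$\ell^1$ bound requires carefully balancing the truncation level against the weak-type tail of the block length.
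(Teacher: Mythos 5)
Your central pointwise reduction is too lossy to yield the theorem. Per-block Cauchy--Schwarz gives $\sigma \le \sum_k a_k^2\,\tau_{N(k)}$, but this already destroys cases (b) and (c): take $a_0=1$ and $a_n=0$ for $n\ge 1$. Then $\sigma=b_0^2=1$ deterministically, while your majorant equals $\tau_0$, which under the hypotheses need only lie in \emph{weak} $L^{\beta-1}$; if $\bP(\tau_0\ge\ell)\asymp\ell^{-\beta+1}$ then $\bE\,\tau_0^{\beta-1}=\infty$ (and $\bE\,\tau_0=\infty$ when $\beta=2$), so your intermediate quantity has infinite $L^{\beta-1}$ norm while the theorem's left-hand side equals $1$. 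Relatedly, the step ``$\tau_{N(k)}$ uniformly in weak $L^{\beta-1}$ plus the Lorentz quasi-triangle inequality gives $\|\sum_k a_k^2\tau_{N(k)}\|_{\beta-1}\le C\sum_k a_k^2$'' is a non sequitur: normability of $L^{\beta-1,\infty}$ only yields a weak-norm conclusion, whereas the theorem asserts strong norms. Two further problems. First, the refined tail $\bP(\tau_{N(k)}\ge\ell)\le Ck\ell^{-\beta}$ is false for $\ell>k$, because the zeroth block contributes $\bP(\tau_0\ge\ell)\asymp\ell^{-\beta+1}\gg k\ell^{-\beta}$; the first block must be split off and treated separately (as is done throughout the paper's proof). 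Second, in case (a) your reduction leaves you with $\sum_k a_k^\beta\tau_{N(k)}^{\beta-1}=\sum_n \tau_n^{\beta-1}A_n$ (with $A_n$ the $n$-th block sum of $a^\beta$), a weighted sum of variables that are only uniformly in weak $L^1$. Weak $L^1$ is not normable, and the generic ``truncate, Markov, union bound'' scheme you propose produces an entropy term $\sum_n A_n\log(1/A_n)$, i.e.\ a loss of order $\sum_n a_n^\beta\log n$ --- precisely the logarithm the theorem claims to avoid.

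The ingredients you are missing are the actual content of the proof. (1) Realize $\sigma$ as the quadratic variation of the Rademacher-randomized martingale $\sum_j I_j a_{r_{j-1},\tau_j}$ and apply the Burkholder--Rosenthal inequalities (Theorem~\ref{thm:burk}); this converts the problem into bounding $\sum_n\bE(a_{r_{n-1},\tau_n}^2\mid\tau_0,\dots,\tau_{n-1})$ plus a maximum term for $\beta\ge 2$, and $\sum_n\|a_{r_{n-1},\tau_n}\|_{\beta,\infty}^\beta$ for $\beta\in(1,2)$ --- the latter via the Johnson--Schechtman/Vershynin weak-type square-function inequality, which is what legitimately sums weak norms without a logarithm. (2) Control block sums not by Cauchy--Schwarz but by $a_{r_{n-1},\tau_n}\le(2\tau_n+1)M_{r_{n-1}}$ with $M$ the Hardy--Littlewood maximal function: since the starting points $r_{n-1}$ are distinct and the maximal inequality holds at exponent $\beta>1$, the sum over $n$ closes without loss. (3) For $\beta\ge 2$, the conditional-second-moment and maximum terms are handled by the combinatorial inequalities of Lemma~\ref{lem:fun} and Lemma~\ref{lem:fun2}, which are exactly what replaces the naive bound $b_n^2\le\tau_n\sum_{k}a_k^2$ and prevents the divergence exhibited above. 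Without substitutes for these three ingredients the argument does not close.
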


\begin{thm}
    \label{thm:omega}
    There is a constant which depends only on $\beta$ and $C_\tau$
    such that:
    \begin{enumerate}[label=(\alph*)]
        \item If $\beta \in (1,2]$, then
            $ \displaystyle
                \| \omega^{1/2} \|_\beta
                \leq C \Bigl( \sum_{n \geq 1} a_n^\beta \Bigr)^{1/\beta}
                .
            $
        \item If $\beta > 2$, then
            $ \displaystyle
                \omega^{1/2}
                \leq C \Bigl( \sum_{n \geq 1} a_n^2 \Bigr)^{1/2}
                .
            $
    \end{enumerate}
\end{thm}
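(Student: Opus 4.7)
I would split the argument by whether the sum $W_n := \sum_{j \geq 1} w_n(j)$, with $w_n(j) := \min(\tau_n j^{-\beta}, j^{-\beta+1})$, is pathwise bounded: a direct computation gives $W_n \leq C$ when $\beta > 2$, $W_n \leq C \tau_n^{2-\beta}$ when $\beta \in (1,2)$, and $W_n \leq C(1 + \log \tau_n)$ when $\beta = 2$. Accordingly, case (b) will admit a purely deterministic proof, while case (a) genuinely requires handling expectations.

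For case (b), $\beta > 2$, I would invoke the Schur test. Setting $A_n := \sum_{j \geq 1} a_{r_n + j - 1} w_n(j)$, so that $\omega = \sum_n A_n^2$, I view $a \mapsto (A_n)_n$ as a linear operator on square-summable sequences with matrix entries $T_{n,m} = w_n(m - r_n + 1)\mathbf{1}_{\{m \geq r_n\}}$. Row sums $\sum_m T_{n,m} = W_n$ are uniformly bounded. For column sums, strict monotonicity of $(r_n)$ makes $\{m - r_n + 1 : r_n \leq m\}$ a set of distinct positive integers, and combined with $w_n(j) \leq j^{-\beta+1}$ this gives $\sum_n T_{n,m} \leq \sum_{k \geq 1} k^{-\beta+1} \leq C$. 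The Schur test then yields $\omega \leq C \sum_m a_m^2$ pathwise.

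For case (a), $\beta \in (1,2]$, I would first use $\beta/2 \leq 1$ and the power-sum inequality $(\sum_n A_n^2)^{\beta/2} \leq \sum_n A_n^\beta$ to reduce the goal to $\sum_n \bE A_n^\beta \leq C \sum_m a_m^\beta$. Jensen's inequality applied with the probability measure $w_n(j)/W_n$ and the convex map $x \mapsto x^\beta$ yields $A_n^\beta \leq W_n^{\beta-1} \sum_{j} a_{r_n+j-1}^\beta w_n(j)$, so interchanging sums,
\[
    \sum_n \bE A_n^\beta \leq \sum_m a_m^\beta \cdot \bE \sum_{n : r_n \leq m} W_n^{\beta-1} w_n(m - r_n + 1).
\]
It then suffices to bound the inner expected sum uniformly in $m$, which I would do by conditioning successively on $(\tau_0, \ldots, \tau_{n-1})$ and using the tail bound $\bP(\tau_n \geq \ell \mid \tau_0, \ldots, \tau_{n-1}) \leq C_\tau \ell^{-\beta}$ together with $w_n(j) \leq \min(\tau_n j^{-\beta}, j^{-\beta+1})$.

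The main obstacle is precisely this uniform bound, most delicate at $\beta = 2$, where the statement permits no logarithmic correction yet $W_n^{\beta-1}$ naively contributes $\log \tau_n$. The structure that should save us is that, for each $m$, all indices $n$ below $n^\ast(m) := \max\{n : r_n \leq m\}$ automatically satisfy $m - r_n + 1 > \tau_n$, so $w_n$ evaluates to $\tau_n(m-r_n+1)^{-\beta}$; only the single active index $n^\ast$ can see the $j \leq \tau_n$ regime in which the $\log$-factor arises. The extra $\tau_n$ decay from tail blocks, combined with the telescoping $\sum_{n < n^\ast}(m-r_n+1)^{-\beta}$ and summation against the conditional tails of $\tau_n$, should absorb the $\log \tau_{n^\ast}$ and give the required bound, in parallel with (but sharper than) the $\beta=2$ sharpening in Theorem~\ref{thm:mart}.
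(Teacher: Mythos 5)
Your proposal is correct in substance and, once unwound, is essentially the paper's proof in different packaging. For (b), the Schur test with row sums $W_n\leq C$ and column sums controlled by the distinctness of the $r_n$ is exactly the paper's argument (Jensen applied to $\bigl(\sum_j a_{r_n+j-1}\delta_{\tau_n,j}\bigr)^2$ followed by the same rearrangement over $m=r_n+j-1$). For (a), your Jensen step $A_n^\beta\leq W_n^{\beta-1}\sum_j a_{r_n+j-1}^\beta w_n(j)$ is precisely the hands-on form of the interpolation in the paper's Lemma~\ref{lem:4.5}: the weight $W_n^{\beta-1}\leq C\tau_n^{(2-\beta)(\beta-1)}$ is the factor $\ell^{(\beta-1)(2-\beta)}$ appearing in the paper's $L^1$ estimate~\eqref{eq:ljj}, and the ``uniform bound on the inner expected sum'' you leave open is exactly the computation~\eqref{eq:lljj}--\eqref{eq:njjj} there (replace $\bP(\tau_n=\ell\mid\cdots)$ by $C\ell^{-\beta-1}$ for increasing summands, then split the $\ell$-sum at $s/2$); it does close, with the exponent $(\beta-1)(\beta-2)-\beta<-1$ providing summability over the conditioning levels. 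Two remarks. First, your diagnosis of the ``main obstacle'' at $\beta=2$ is off: no telescoping or absorption via the active index $n^\ast$ is needed, because the conditional tail weight $\ell^{-\beta-1}=\ell^{-3}$ integrated against $W_n w_n(s-\ell)\leq C(1+\log\ell)\,\delta_{\ell,s-\ell}$ already yields $O(s^{-2})$ directly, so the logarithm is harmless; the mechanism you describe is a red herring, though the step it is meant to justify is true. Second, you do not separate the $n=0$ term, whose heavier tail $\bP(\tau_0\geq\ell)\leq C\ell^{-\beta+1}$ only gives the weaker weight $\ell^{-\beta}$; the paper sidesteps this by bounding the $n=0$ summand deterministically via H\"older. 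Your unified treatment still works (the relevant exponent is $\beta(1-\beta)<0$), but this is a case you must check separately rather than fold silently into the generic conditional bound.
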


Proof of Theorems~\ref{thm:mart} and~\ref{thm:omega} takes the rest of this section.

\subsection{Proof of Theorem~\ref{thm:mart}}

A key ingredient of the proof is:

\begin{thm}[Burkholder type inequalities]
    \label{thm:burk}
    Suppose that $M_n$ is a martingale adapted to a filtration $\cF_n$
    with increments $X_n = M_n - M_{n-1}$,
    maximum $M^*_n = \max_{j \leq n} |M_n|$ and
    quadratic variation $[M]_n = \sum_{j \leq n} |X_j|^2$.
    Let $p > 1$ and let $c_p$ and $C_p$ denote constants
    which depend only on $p$. Then for all $n$:
    \begin{enumerate}[label=(\alph*)]
        \item\label{thm:burk:Lp}
            \(
                \displaystyle
                c_p \bigl\| [M]_n^{1/2} \bigr\|_p
                \leq \|M^*_n\|_p
                \leq C_p \bigl\| [M]_n^{1/2} \bigr\|_p
                .
            \)
        \item\label{thm:burk:wLp}
            \(
                \displaystyle
                c_p \bigl\| [M]_n^{1/2} \bigr\|_{p, \infty}
                \leq \|M^*_n\|_{p, \infty}
                \leq C_p \bigl\| [M]_n^{1/2} \bigr\|_{p, \infty}
            \).
        \item\label{thm:burk:p12}
            If $p \in (1,2)$, then
            \(
                \bigl\| [M]_n^{1/2} \bigr\|_{p, \infty}
                \leq C_p \bigl( \sum_{j \leq n} \|X_j\|_{p, \infty}^p \bigr)^{1/p}
                .
            \)
        \item\label{thm:burk:ros}
            \(
                \displaystyle
                \|M_n\|_p
                \leq C_p \Bigl\| \sum_{j \leq n} \bE \bigl( |X_j|^2 \mid \cF_{j-1} \bigr) \Bigr\|_{p/2}^{1/2}
                + C_p \bigl\| \max_{j \leq n} |X_j| \bigr\|_p
                .
            \)
    \end{enumerate}
\end{thm}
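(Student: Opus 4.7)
The four statements are classical martingale inequalities, so my plan is to derive them from a small set of standard tools rather than develop new machinery. The workhorse for parts~\ref{thm:burk:Lp} and~\ref{thm:burk:wLp} is Burkholder's good-$\lambda$ inequality comparing the maximal function and the quadratic variation: for some universal function $\phi$ with $\phi(\delta) \to 0$ as $\delta \to 0$,
\[
    \bP\bigl(M_n^* > 2\lambda, \ [M]_n^{1/2} \leq \delta \lambda\bigr)
    \leq \phi(\delta) \, \bP(M_n^* > \lambda),
\]
together with the symmetric inequality. Integrating against $p\lambda^{p-1}\, d\lambda$ gives~\ref{thm:burk:Lp}, while multiplying by $\lambda^p$ and taking $\sup_\lambda$ gives the weak-$L^p$ version~\ref{thm:burk:wLp}.

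For~\ref{thm:burk:p12}, where the bound is expressed in terms of weak-$L^p$ norms of the increments for $p \in (1,2)$, I would use a truncation argument. Fix $\lambda > 0$, set $t \asymp \lambda$, and decompose each increment as $X_j = Y_j + Z_j$, where
\[
    Y_j = X_j \mathbf{1}_{|X_j| \leq t} - \bE\bigl( X_j \mathbf{1}_{|X_j| \leq t} \mid \cF_{j-1} \bigr).
\]
Then $\sum_j Y_j$ is a bounded-increment martingale, and a direct computation against the definition of the weak norm gives $\bE Y_j^2 \leq \bE(X_j^2 \mathbf{1}_{|X_j| \leq t}) \leq C_p \|X_j\|_{p,\infty}^p t^{2-p}$. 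Chebyshev applied to $\sum_j Y_j$, combined with the crude union bound $\bP(\bigcup_j \{|X_j| > t\}) \leq t^{-p}\sum_j\|X_j\|_{p,\infty}^p$ for $\sum_j Z_j$, yields $\bP(M_n^* > \lambda) \leq C_p \lambda^{-p} \sum_j \|X_j\|_{p, \infty}^p$, and part~\ref{thm:burk:wLp} transfers this to $[M]_n^{1/2}$.

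For~\ref{thm:burk:ros}, which is a form of the Burkholder-Rosenthal inequality, I would truncate once more: write $X_j = X'_j + X''_j$ with $X'_j = X_j \mathbf{1}_{|X_j| \leq t} - \bE(X_j \mathbf{1}_{|X_j| \leq t} \mid \cF_{j-1})$ for an appropriate $t$, apply part~\ref{thm:burk:Lp} to $\sum_j X'_j$, bound its quadratic variation pointwise by the predictable quadratic variation $\sum_j \bE(X_j^2 \mid \cF_{j-1})$, and control the remainder $\sum_j X''_j$ termwise by $\|\max_j |X_j|\|_p$ via the $L^p$ triangle inequality.

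The only real difficulty I anticipate is bookkeeping: none of these results requires a new idea, but the weak-$L^p$ formulation in~\ref{thm:burk:p12} and the Burkholder-Rosenthal form in~\ref{thm:burk:ros} are less frequently stated in exactly the form needed here, so the main effort lies in assembling clean references or short self-contained derivations from the good-$\lambda$ framework. Given that the paper's novel content is elsewhere, I would expect this theorem to be essentially a citation, perhaps with brief indications as above.
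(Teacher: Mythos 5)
Your instinct that this theorem is essentially a citation is exactly right, and the good-$\lambda$ route you sketch for \ref{thm:burk:Lp} and \ref{thm:burk:wLp} is the standard one lurking behind the references. The paper simply cites Burkholder \cite[Theorems~3.2 and~21.1]{B73} for \ref{thm:burk:Lp} and \ref{thm:burk:ros}, and Johnson--Schechtman \cite[Remark~6]{JS88} for \ref{thm:burk:wLp}, without reproving anything. The genuine difference is in \ref{thm:burk:p12}: you go through a truncation $X_j = Y_j + Z_j$ at level $t\asymp\lambda$, estimate $M_n^*$ via Chebyshev on $\sum Y_j$ and a union bound on $\sum Z_j$, and then transfer to $[M]_n^{1/2}$ using \ref{thm:burk:wLp}; the paper instead bounds $[M]_n^{1/2}$ directly in two lines by writing $\|[M]_n^{1/2}\|_{p,\infty} = \|[M]_n^{p/2}\|_{1,\infty}^{1/p}$ and invoking a quasi-triangle inequality for $\|\cdot\|_{1,\infty}$ (Vershynin, building on Hagelstein), which costs only a factor $4/(2-p)$, recovering exactly $C_p(\sum_j\|X_j\|_{p,\infty}^p)^{1/p}$. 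Both routes are valid; the paper's is shorter and avoids any martingale machinery. One small imprecision in your sketch of \ref{thm:burk:p12}: the union bound over $\{|X_j|>t\}$ only kills the term $\sum_j X_j\mathbf{1}_{|X_j|>t}$, whereas $Z_j$ also contains the compensator $\bE(X_j\mathbf{1}_{|X_j|>t}\mid\cF_{j-1})$, which requires an additional $L^1$-tail estimate $\bE(|X_j|\mathbf{1}_{|X_j|>t})\leq\frac{p}{p-1}t^{1-p}\|X_j\|_{p,\infty}^p$ together with Markov's inequality at level $\lambda$; this is routine but omitted in your outline.
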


\begin{proof}
    Parts~\ref{thm:burk:Lp} and~\ref{thm:burk:ros} are proved
    in Burkholder \cite[Theorems~3.2 and~21.1]{B73}.
    Part~\ref{thm:burk:wLp} can be found in Johnson and Schechtman \cite[Remark~6]{JS88}.
    To prove part~\ref{thm:burk:p12}, write
    \[
        \bigl\|[M]_n^{1/2}\bigr\|_{p, \infty}
        = \bigl\| [M]_n^{p/2} \bigr\|_{1, \infty}^{1/p}
        \leq \Bigl( \frac{4}{2-p} \sum_{j \leq n} \bigl\| |X_j|^p \bigr\|_{1,\infty} \Bigr)^{1/p}
        = C_p \Bigl( \sum_{j \leq n} \bigl\|X_j\bigr\|_{p,\infty}^p \Bigr)^{1/p}
        ,
    \]
    where we used a surrogate triangle inequality for $\|\cdot\|_{1,\infty}$ from
    Vershynin~\cite[Proposition~1]{V12} (which is an extended version of Hagelstein~\cite[Theorem~2]{H05}).
    Alternatively, part~\ref{thm:burk:p12} is a corollary of
    part~\ref{thm:burk:wLp} and~\cite[Theorem~2.5]{GM14}.
\end{proof}

\begin{rmk}
    \label{rmk:sqmart}
    Let $a_{n,j} = \sum_{\ell=n}^{n+j-1} a_\ell$ and let
    $A_n = \sum_{0 \leq j \leq n} I_j a_{r_{j-1}, \tau_j}$, where $I_n$ are independent
    (also from $\tau_n$) coin flips, $\bP(I_n = \pm 1) = 1/2$.
    Then $A_n$ is a martingale with quadratic variation $[A]_\infty = \sigma$.
    With Theorem~\ref{thm:burk} this implies that for $p > 1$,
    \begin{equation}
        \label{eq:uch}
        \|\sigma^{1/2}\|_p
        \leq C_p \Bigl\|
        \sum_{n \geq 0} \bE \bigl( a_{r_{n-1}, \tau_n}^2 \mid \tau_{0}, \ldots, \tau_{n-1} \bigr)
        \Bigr\|_{p/2}^{1/2}
        + C_p \bigl\| \max_{n \geq 0} a_{r_{n-1}, \tau_n} \bigr\|_{p}
    \end{equation}
    and that for $p \in (1,2)$,
    \begin{equation}
        \label{eq:mik}
        \|\sigma^{1/2}\|_{p, \infty}^p
        \leq C_p \sum_{j \geq 0} \|a_{r_{n-1}, \tau_n}\|_{p, \infty}^p
        .
    \end{equation}
\end{rmk}

Another key ingredient for the case $\beta > 2$ is an elementary inequality with a surprisingly
nontrivial proof:

\begin{lem}[{\cite[Lemma~4.4]{GM14}}]
    \label{lem:fun}
    Suppose that $\beta > 2$. Consider a nonnegative sequence $w_n$ with
    $\sum_{k \geq n} w_k = O(n^{-\beta})$. There exists a constant $C$ such that
    for every sequence $a_n \in \ell^2(\bZ)$,
    \[
        \sum_{n \in \bZ} \sum_{k \geq 0} w_k \Bigl( \sum_{j=n-k}^{n+k} a_j \Bigr)^{2(\beta - 1)}
        \leq C \Bigl( \sum_{n \in \bZ} a_n^2 \Bigr)^{\beta - 1}
        .
    \]
\end{lem}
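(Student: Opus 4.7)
The plan is to combine two elementary pointwise bounds on the inner sum with the tail hypothesis on $w$, splitting the sum over $k$ at an $n$-dependent threshold chosen so that both resulting pieces collapse to the same estimate. By replacing $a_j$ with $|a_j|$ the left-hand side can only increase while $\sum_j a_j^2$ is unchanged, so I assume $a_j \geq 0$ throughout. Let
\[
\phi_n = \sup_{k \geq 0} \frac{1}{2k+1} \sum_{|j-n|\leq k} a_j
\]
be the discrete Hardy--Littlewood maximal function; the standard $\ell^2$ maximal inequality on $\bZ$ gives $\sum_n \phi_n^2 \leq C \sum_j a_j^2$. A direct Cauchy--Schwarz inside the supremum also yields $\phi_n \leq (\sum_j a_j^2)^{1/2}$.

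The two basic bounds I use are
\[
\sum_{|j-n|\leq k} a_j \leq (2k+1)\,\phi_n
\qquad \text{and} \qquad
\sum_{|j-n|\leq k} a_j \leq \sqrt{2k+1}\,\Bigl(\textstyle\sum_j a_j^2\Bigr)^{1/2},
\]
which balance at $2k+1 = K(n) := (\sum_j a_j^2)/\phi_n^2$, with $K(n) \geq 1$ by the previous remark. Splitting $\sum_k = \sum_{k \leq K(n)} + \sum_{k > K(n)}$ and raising each bound to the power $2(\beta-1)$, the inner sums to estimate become $\sum_{k \leq K(n)} w_k (2k+1)^{2(\beta-1)}$ and $\sum_{k > K(n)} w_k (2k+1)^{\beta-1}$. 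Using Abel summation and $W_k := \sum_{j \geq k} w_j \leq C k^{-\beta}$, one finds that the first sum is $O(K(n)^{\beta-2})$ (the discrete derivative of $(2k+1)^{2(\beta-1)}$ grows like $k^{2\beta - 3}$, and multiplied by $W_k \sim k^{-\beta}$ this produces $k^{\beta-3}$, whose partial sums are $O(K^{\beta-2})$ for $\beta > 2$ and $K \geq 1$), while the second sum is $O(K(n)^{-1})$ (the boundary term $W_K \cdot K^{\beta-1}$ and the tail integrand $W_k \cdot k^{\beta-2} \sim k^{-2}$ are each $O(K^{-1})$).

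The critical observation is that substituting $K(n) = (\sum_j a_j^2)/\phi_n^2$ into both contributions produces \emph{the same} expression $C (\sum_j a_j^2)^{\beta-2} \phi_n^2$. Summing over $n$ and applying the maximal inequality yields
\[
\sum_n \sum_k w_k \Bigl(\textstyle\sum_{|j-n|\leq k} a_j\Bigr)^{2(\beta-1)}
\leq C \Bigl(\textstyle\sum_j a_j^2\Bigr)^{\beta-2} \sum_n \phi_n^2
\leq C \Bigl(\textstyle\sum_j a_j^2\Bigr)^{\beta-1},
\]
as required. The main obstacle is the precise calibration of $K(n)$: any constant threshold, or a naive dyadic split, introduces a logarithmic divergence in the implicit scale decomposition of $k$. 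The choice $K(n) \propto \phi_n^{-2}$ is forced by matching the powers of $(2k+1)$ in the two bounds, and the fact that $\sum_n \phi_n^2$ (rather than a higher-order moment of $\phi$) appears in the end is exactly what makes the $\ell^2$ Hardy--Littlewood inequality sharp enough to close the estimate at the threshold $\beta = 2$.
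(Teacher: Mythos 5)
Your proof is correct. The paper does not prove this lemma itself but imports it from \cite{GM14}, and your argument --- interpolating between the maximal-function bound $\sum_{|j-n|\le k}a_j\le(2k+1)\phi_n$ and the Cauchy--Schwarz bound, splitting the sum over $k$ at the crossover scale $2k+1\approx(\sum_j a_j^2)/\phi_n^2$, and closing with the $\ell^2$ Hardy--Littlewood maximal inequality --- is essentially the proof given there.
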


For $\beta = 2$ we use a simpler inequality:

\begin{lem}
    \label{lem:fun2}
    There is a constant $C > 0$ such that for every nonnegative sequence $a_n$,
    \[
        \sum_{n \geq 1} \sum_{k \geq 1} k^{-3} (a_n + \ldots + a_{n + k - 1})^2
        \leq C \sum_{n \geq 1} a_n^2 (1 + \log n)
        .
    \]
\end{lem}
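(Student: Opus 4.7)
The proof is short and essentially an application of Cauchy--Schwarz followed by a Fubini-type exchange of summation. Concretely, for each fixed $n$ and $k$,
\[
    (a_n + \cdots + a_{n+k-1})^2
    \leq k \sum_{j=n}^{n+k-1} a_j^2
\]
by Cauchy--Schwarz, so combining with the weight $k^{-3}$ one gets $k^{-2}$ instead of $k^{-3}$. Thus it suffices to bound
\[
    \sum_{n \geq 1} \sum_{k \geq 1} k^{-2} \sum_{j=n}^{n+k-1} a_j^2
    .
\]

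The plan is then to swap the order of summation so that $a_j^2$ is the outer variable. For fixed $j \geq 1$, the pair $(n,k)$ with $n \leq j \leq n+k-1$ ranges over $n \in \{1, \ldots, j\}$ and $k \geq j - n + 1$. Using the elementary bound $\sum_{k \geq m} k^{-2} \leq 2/m$, the contribution for each fixed $j$ is at most
\[
    a_j^2 \sum_{n=1}^{j} \frac{2}{j - n + 1}
    = 2 a_j^2 \sum_{m=1}^{j} \frac{1}{m}
    \leq C a_j^2 (1 + \log j)
    .
\]
Summing over $j$ gives the claimed bound.

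There is essentially no obstacle: the exponent $-3$ is calibrated precisely so that, after losing one power of $k$ to Cauchy--Schwarz, the remaining weight $k^{-2}$ is just barely summable, and the ensuing harmonic sum over $n$ produces the logarithmic factor $1 + \log j$ on the right-hand side. The only thing worth verifying is that Cauchy--Schwarz is indeed sharp enough; since any improvement (e.g.\ treating the sum of $a_j$'s directly) would typically cost more than the $\log j$ factor we end up with, there is no slack to recover and the lemma is tight in its form.
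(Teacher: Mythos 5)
Your proof is correct and takes essentially the same route as the paper: apply Cauchy--Schwarz to replace $(a_n+\cdots+a_{n+k-1})^2$ by $k\sum_{j=n}^{n+k-1} a_j^2$, swap the order of summation to make $a_j^2$ the outer variable, and then the remaining double sum $\sum_{n\le j}\sum_{k>j-n} k^{-2}$ collapses to a harmonic sum giving $O(1+\log j)$. The only cosmetic difference is that the paper compresses the final two sums into a single line without introducing the intermediate bound $\sum_{k\ge m} k^{-2}\le 2/m$ explicitly.
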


\begin{proof}
    Write
    \begin{align*}
        \sum_{n \geq 1}
        & \sum_{k \geq 1} k^{-3} (a_n + \ldots + a_{n + k - 1})^2
        \leq \sum_{n \geq 1} \sum_{k \geq 1} k^{-2} (a_n^2 + \ldots + a_{n + k - 1}^2)
        \\
        & = \sum_{m \geq 1} a_m^2 \sum_{n \leq m} \sum_{k > m-n} k^{-2}
        \leq C \sum_{m \geq 1} a_m^2 (1 + \log m)
        .
    \end{align*}
\end{proof}

We use $C$ to denote various constants which depend only on $\beta$ and $C_\tau$.
As in Remark~\ref{rmk:sqmart}, we let $a_{n,j} = \sum_{\ell=n}^{n+j-1} a_\ell$.
Throughout we use the observation that if $b_j$ is an increasing sequence, then
\begin{align*}
    \sum_{j \geq 1} \bP(\tau_0 = j) b_j
    & \leq C \sum_{j \geq 1} j^{-\beta} b_j
    \quad \text{and}
    \\
    \sum_{j \geq 1} \bP(\tau_n = j \mid \tau_0, \ldots, \tau_{n-1}) b_j
    & \leq C \sum_{j \geq 1} j^{-\beta - 1} b_j
    \quad \text{for } n \geq 1
    .
\end{align*}

\subsubsection{Proof of Theorem~\ref{thm:mart}, case \texorpdfstring{$\beta \in (1,2)$}{beta in (1,2}.}

Let $M_n = \sup_{k \geq 0} \frac{a_{n-k} + \cdots + a_{n+k}}{2k + 1}$,
where $a_{n} = 0$ if $n  < 0$.
For $n \geq 1$,
\begin{align*}
    \|a_{r_{n-1}, \tau_n}\|_{\beta, \infty}^\beta
    & \leq \| (2 \tau_n + 1) M_{r_{n-1}} \|_{\beta, \infty}^\beta
    \\
    & = \sup_{t > 0} \bE \bigl( t^\beta \bP( M_{r_{n-1}} (2 \tau_n + 1) > t \mid \tau_0, \ldots, \tau_{n-1}) \bigr)
    \leq C \bE M_{r_{n-1}}^\beta
    .
\end{align*}
Next,
\[
    \sum_{n \geq 1} \|a_{r_{n-1}, \tau_n}\|_{\beta, \infty}^\beta
    \leq C \bE \sum_{n \geq 1} M_{r_{n-1}}^\beta
    \leq C \sum_{n \geq 0} M_{n}^\beta
    \leq C \sum_{n \geq 0} a_n^\beta
    ,
\]
where for the last step we used the Hardy-Littlewood maximal inequality (cf.~\cite[Theorem~2.3]{GM14}).
The term corresponding to $n=0$ is simpler:
\begin{align*}
    \|a_{0,\tau_0}\|_{\beta, \infty}^\beta
    & = \|a_{0,\tau_0}^\beta \|_{1, \infty}
    \leq \bigl\| \tau_0^{\beta - 1} ( a_0^\beta + \ldots + a_{\tau_0 - 1}^\beta ) \bigr\|_{1, \infty}
    \\
    & \leq \| \tau_0 \|_{\beta - 1, \infty}^{\beta - 1} ( a_0^\beta + a_{1}^\beta + \cdots )
    \\
    & \leq C ( a_0^\beta + a_{1}^\beta + \cdots )
    .
\end{align*}
Altogether,
\begin{equation}
    \label{eq:abetainf}
    \sum_{n \geq 0} \|a_{r_{n-1}, \tau_n}\|_{\beta, \infty}^\beta
    \leq C \sum_{n \geq 0} a_n^\beta
    .
\end{equation}
The desired result follows from~\eqref{eq:mik}.

\subsubsection{Proof of Theorem~\ref{thm:mart}, case \texorpdfstring{$\beta > 2$}{beta > 2}.}

We bound the two terms on the right hand side of~\eqref{eq:uch},
giving special treatment to the case $n = 0$.

First,
\begin{equation}
    \label{eq:param-pam-pam}
    \begin{aligned}
        \bE a_{0, \tau_0}^{2(\beta - 1)}
        & = \sum_{j \geq 1} \bP(\tau_0 = j) a_{0,j}^{2(\beta - 1)}
        \leq C \sum_{j \geq 1} j^{-\beta} a_{0,j}^{2(\beta - 1)}
        \\
        & \leq C \sum_{k \in \bZ} \sum_{j \geq 0} j^{-\beta - 1} (a_{k-j} + \cdots + a_{k+j})^{2(\beta - 1)}
        \leq C \Bigl( \sum_{j \geq 0} a_j^2 \Bigr)^{\beta - 1}
        ,
    \end{aligned}
\end{equation}
where $a_j = 0$ for $j < 0$, and for the last inequality we used Lemma~\ref{lem:fun}.
It follows that
\begin{equation}
    \label{eq:tu-tu-tu}
    \bE a_{0, \tau_0}^2
    \leq C \sum_{j \geq 0} a_j^2
    .
\end{equation}

Next, for $p \geq 1$,
\begin{equation}
    \label{eq:bca}
    \begin{aligned}
        \sum_{n \geq 1}
        & \bE \bigl( a_{r_{n-1}, \tau_n}^p \mid \tau_{0}, \ldots, \tau_{n-1} \bigr)
        = \sum_{n \geq 1} \sum_{j \geq 1}
        \bP \bigl( \tau_n = j \mid  \tau_{0}, \ldots, \tau_{n-1} \bigr) a_{r_{n-1},j}^p
        \\
        & \leq C \sum_{n \geq 1} \sum_{j \geq 1} j^{-\beta - 1} a_{r_{n-1},j}^p
        \leq C \sum_{n \geq 1} \sum_{j \geq 1} j^{-\beta - 1} a_{n,j}^p
        .
    \end{aligned}
\end{equation}
Since $a_{n,j}^2 \leq j (a_n^2 + \ldots + a_{n+j-1}^2)$, using~\eqref{eq:bca}
with $p=2$ yields
\[
    \sum_{n \geq 1} \bE \bigl( a_{r_{n-1}, \tau_n}^2 \mid \tau_{0}, \ldots, \tau_{n-1} \bigr)
    \leq C \sum_{n \geq 1} \sum_{k \geq 0} (k+1)^{-\beta} a_{n+k}^2
    \leq C \sum_{n \geq 1} a_n^2
    .
\]
Summing the above with~\eqref{eq:tu-tu-tu}, we obtain
\begin{equation}
    \label{eq:tu-tu}
    \sum_{n \geq 0} \bE \bigl( a_{r_{n-1}, \tau_n}^2 \mid \tau_{0}, \ldots, \tau_{n-1} \bigr)
    \leq C \sum_{n \geq 0} a_n^2
    .
\end{equation}

Write
\begin{align*}
    \bE \max_{n \geq 1} a_{r_{n-1}, \tau_n}^{2(\beta - 1)}
    & \leq \bE \sum_{n \geq 1} a_{r_{n-1}, \tau_n}^{2(\beta - 1)}
    \leq \bE \sum_{n \geq 1} \bE( a_{r_{n-1}, \tau_n}^{2(\beta - 1)} \mid \tau_0, \ldots, \tau_{n-1})
    \\
    & \leq C \sum_{n \geq 1} \sum_{j \geq 1} j^{-\beta-1} a_{n,j}^{2(\beta - 1)}
    \leq C \Bigl( \sum_{n \geq 1} a_n^2 \Bigr)^{\beta - 1}
    ,
\end{align*}
where for the second last inequality we used~\eqref{eq:bca}, and for the
last inequality we used~Lemma~\ref{lem:fun}. With~\eqref{eq:param-pam-pam},
we have
\begin{equation}
    \label{eq:label}
    \bE \max_{n \geq 0} a_{r_{n-1}, \tau_n}^{2(\beta - 1)}
    \leq C \Bigl( \sum_{n \geq 0} a_n^2 \Bigr)^{\beta - 1}
    .
\end{equation}

Altogether,~\eqref{eq:tu-tu},~\eqref{eq:label} and~\eqref{eq:uch} prove the desired bound.

\subsubsection{Proof of Theorem~\ref{thm:mart}, case \texorpdfstring{$\beta = 2$}{beta =2}.}
The proof is similar to that for $\beta > 2$, using Lemma~\ref{lem:fun2} instead of Lemma~\ref{lem:fun}.

Let $M_n = \max_{k < n} \frac{a_k + \cdots + a_{n-1}}{n - k}$. Then
\begin{equation}
    \label{eq:param-pam-pam2}
    \bE a_{0, \tau_0}^2
    = \sum_{j \geq 1} \bP(\tau_0 = j) a_{0,j}^2
    \leq C \sum_{j \geq 1} j^{-2} a_{0,j}^2
    \leq C \sum_{j \geq 1}  M_j^2
    \leq C \sum_{j \geq 0}  a_j^2
    ,
\end{equation}
where we used the Hardy-Littlewood maximal inequality at the last step
(taking into account that $M_n \leq 3 \max_{k > 0} \frac{a_{n-1-k} + \cdots + a_{n-1+k}}{2k + 1}$).

Next, similar to~\eqref{eq:bca} and using Lemma~\ref{lem:fun2},
\begin{equation}
    \label{eq:bcaz}
    \begin{aligned}
        \sum_{n \geq 1}
        & \bE \bigl( a_{r_{n-1}, \tau_n}^2 \mid \tau_{0}, \ldots, \tau_{n-1} \bigr)
        = \sum_{n,j \geq 1}
        \bP \bigl( \tau_n = j \mid  \tau_{0}, \ldots, \tau_{n-1} \bigr) a_{r_{n-1},j}^2
        \\
        & \leq C \sum_{n,j \geq 1} j^{-3} a_{r_{n-1},j}^2
        \leq C \sum_{n,j \geq 1} j^{-3} a_{n,j}^2
        \leq C \sum_{n \geq 1} a_n^2 (1 + \log n)
        .
    \end{aligned}
\end{equation}

By~\eqref{eq:param-pam-pam2} and~\eqref{eq:bcaz},
\begin{equation}
    \label{eq:bbd}
    \sum_{n \geq 0} \bE \bigl( a_{r_{n-1}, \tau_n}^2 \mid \tau_{0}, \ldots, \tau_{n-1} \bigr)
    \leq C \sum_{n \geq 0} a_n^2 (1 + \log (n + 1))
\end{equation}
and
\[
    \bE \bigl( \max_{n \geq 0} a_{r_{n-1}, \tau_n}^2 \bigr)
    \leq \bE
    \sum_{n \geq 0} \bE \bigl( a_{r_{n-1}, \tau_n}^2 \mid \tau_{0}, \ldots, \tau_{n-1} \bigr)
    \leq C \sum_{n \geq 0} a_n^2 (1 + \log (n + 1))
    .
\]
Hence by~\eqref{eq:uch},
\[
    \bE \sigma
    \leq C \sum_{n \geq 0} a_n^2 (1 + \log (n + 1))
    .
\]

It remains to bound $\bE \sigma^{p/2}$ with $p > 2$.
By~\eqref{eq:abetainf}, which is not restricted to $\beta \in (1,2)$,
\[
    \bigl\| \max_{n \geq 0} a_{r_{n-1}, \tau_n} \bigr\|_{2, \infty}^2
    \leq \sum_{n \geq 0} \| a_{r_{n-1}, \tau_n} \|_{2, \infty}^2
    \leq C \sum_{n \geq 0} a_n^2
    .
\]
Thus $\bP(\max_{n \geq 0} a_{r_{n-1}, \tau_n} \geq t) \leq C t^{-2} \sum_{n \geq 0} a_n^2$.
Also, $\|\max_{n \geq 0} a_{r_{n-1}, \tau_n} \|_\infty \leq \sum_{n \geq 0} a_n$, hence
\begin{align*}
    \bE \bigl( \max_{n \geq 0} a_{r_{n-1}, \tau_n} \bigr)^p
    & \leq C_p \sum_{n \geq 0} a_n^2
    \int_0^{\sum_{n \geq 0} a_n} t^{p-1} t^{-2} \, dt
    \\
    & \leq C_p \Bigl( \sum_{n \geq 0} a_n^2 \Bigr) \Bigl( \sum_{n \geq 0} a_n \Bigr)^{p-2}
    .
\end{align*}
Here and further, $C_p$ denotes constants which depend only on $\beta$, $C_\tau$ and $p$.
By the above,~\eqref{eq:bbd} and~\eqref{eq:uch},
\[
    \| \sigma^{1/2} \|_p
    \leq C_p \Bigl( \sum_{n \geq 0} a_n^2 (1 + \log (n + 1)) \Bigr)^{1/2}
    + C_p \Bigl( \sum_{n \geq 0} a_n^2 \Bigr)^{1/p}
    \Bigl( \sum_{n \geq 0} a_n \Bigr)^{1-2/p}
    ,
\]
as required.

\subsection{Proof of Theorem~\ref{thm:omega}}

We abbreviate $\delta_{\ell, k} = \min\{ \ell k^{-\beta}, k^{-\beta + 1} \}$.

The case $\beta > 2$ is simple: by Jensen's inequality,
\[
    \Bigl(
        \sum_{j \geq 1} a_{r_n + j - 1} \delta_{\tau_n,j}
    \Bigr)^2
    \leq C \sum_{j \geq 1} a_{r_n + j - 1}^2 j^{-\beta + 1}
    ,
\]
so
\[
    \omega
    \leq C \sum_{n \geq 0} \sum_{j \geq 1} a_{r_n + j - 1}^2 j^{-\beta + 1}
    \leq C \sum_{n \geq 1} a_n^2
    .
\]
Further we treat $\beta \in (1,2]$.

Let
\[
    \tomega
    = \sum_{n \geq 1} \Bigl(
        \sum_{j \geq 1} a_{r_n + j - 1} \delta_{\tau_n, j}
    \Bigr)^2
    ,
\]
so that
\[
    \omega
    = \Bigl(
        \sum_{j \geq 1} a_{r_0 + j - 1} \delta_{\tau_0, j}
    \Bigr)^2
    + \tomega
    .
\]
By H\"older's inequality,
\[
    \sum_{j \geq 1} a_{r_0 + j - 1} \delta_{\tau_n, j}
    \leq \Bigl( \sum_{j \geq 1} a_{r_0 + j - 1}^\beta \Bigr)^{1/\beta}
    \Bigl( \sum_{j \geq 1} j^{-\beta} \Bigr)^{(\beta - 1) / \beta}
    \leq C \Bigl( \sum_{j \geq 1} a_j^\beta \Bigr)^{1/\beta}
    ,
\]
hence it remains to show that
\begin{equation}
    \label{eq:tomega}
    \bE \tomega^{\beta/2}
    \leq C \sum_{j \geq 1} a_j^\beta
    .
\end{equation}

We note that
\[
    \tomega^{1/2}
    \leq \biggl[
        \sum_{n \geq 1} \Bigl(
        \sum_{j \geq 1} a_{r_n + j - 1} \delta_{\tau_n,j}
        \Bigr)^\beta
    \biggr]^{1/\beta}
\]
and thus
\begin{align*}
    \bE \tomega^{\beta/2}
    & \leq \bE \sum_{n \geq 1} 
    \bE \Bigl[ \Bigl( \sum_{j \geq 1} a_{r_n + j - 1} \delta_{\tau_n,j} \Bigr)^\beta
        \, \Bigm| \,
        \tau_0, \ldots, \tau_{n-1} 
    \Bigr]
    \\
    & = \bE \sum_{n, \ell \geq 1} \bP(\tau_n = \ell \mid \tau_0, \ldots, \tau_{n-1})
    \Bigl( \sum_{j \geq 1} a_{r_{n-1} + \ell + j - 1} \delta_{\ell,j} \Bigr)^\beta
    \\
    & \leq \bE \sum_{n, \ell \geq 1} p_{n, \ell}
    \Bigl( \sum_{j \geq 1} a_{n + \ell + j - 1} \delta_{\ell,j} \Bigr)^\beta
    ,
\end{align*}
where
\[
    p_{n, \ell} 
    = \begin{cases}
        \bP(\tau_k = \ell \mid \tau_0, \ldots, \tau_{k-1}), & r_{k-1} = n \text{ for some } k,
        \\
        0, & \text{else}.
    \end{cases}
\]
Recall that $\sum_{k \geq \ell} p_{n, k} \leq C \ell^{-\beta}$ almost surely
for all $n,\ell \geq 1$.
Now the bound~\eqref{eq:tomega} follows from Lemma~\ref{lem:4.5},
which is a version of~\cite[Lemma~4.5]{GM14}. This completes the proof
of Theorem~\ref{thm:omega}.

\begin{lem}
    \label{lem:4.5}
    Suppose that $p_{n,  \ell}$, $n, \ell \geq 1$ are nonnegative constants such that
    for all $\ell \geq 1$,
    \[
        \sup_n \sum_{k \geq \ell} p_{n, k}
        \leq C_\beta \ell^{-\beta},
    \]
    where $\beta > 1$ and $C_\beta > 0$. Then for every nonnegative sequence
    $a_n$,
    \[
        \sum_{n,\ell \geq 1} p_{n,\ell}
        \Bigl(
            \sum_{k \geq 1} a_{n + \ell + k} \min\{ \ell k^{-\beta}, k^{-\beta + 1} \}
        \Bigr)^\beta
        \leq C \sum_{n \geq 3} a_n^{\beta}
        .
    \]
\end{lem}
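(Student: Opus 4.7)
The plan is to bring the $\beta$-th power inside via H\"older's inequality, reindex the triple sum by the position $m = n+\ell+k$ where $a_m^\beta$ appears, and show that the aggregate weight on each $a_m^\beta$ is bounded uniformly in $m$.

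First, abbreviate $w_{\ell,k} = \min\{\ell k^{-\beta}, k^{-\beta+1}\}$ and $W_\ell = \sum_{k \geq 1} w_{\ell,k}$. A direct computation, splitting the sum at $k = \ell$, gives
\[
    W_\ell \leq C \begin{cases} \ell^{2-\beta}, & \beta \in (1,2), \\ \log(\ell+1), & \beta = 2, \\ 1, & \beta > 2. \end{cases}
\]
Applying H\"older's inequality with exponents $\beta$ and $\beta/(\beta-1)$ to the factorization $\sum_k a_{n+\ell+k} w_{\ell,k} = \sum_k \bigl(a_{n+\ell+k} w_{\ell,k}^{1/\beta}\bigr) w_{\ell,k}^{1-1/\beta}$ and raising to the $\beta$-th power yields
\[
    \Bigl(\sum_k a_{n+\ell+k} w_{\ell,k}\Bigr)^\beta \leq W_\ell^{\beta-1} \sum_k a_{n+\ell+k}^\beta w_{\ell,k}.
\]
Plugging this into the left-hand side of the lemma and reindexing by $m = n+\ell+k$ bounds it by $\sum_{m \geq 3} a_m^\beta B_m$, where $B_m = \sum_{\substack{n+\ell+k = m \\ n,\ell,k \geq 1}} p_{n,\ell} W_\ell^{\beta-1} w_{\ell,k}$. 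It remains to verify $\sup_m B_m \leq C$.

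To bound $B_m$, I would fix $n$ and set $q = m - n$, so the contribution from $n$ is $\sum_{\substack{\ell+k=q \\ \ell,k \geq 1}} p_{n,\ell} W_\ell^{\beta-1} w_{\ell,k}$. Split according to whether $\ell \leq q/2$ (in which case $k \geq \ell$ and $w_{\ell,k} = \ell k^{-\beta}$) or $\ell > q/2$ (in which case $k < \ell$ and $w_{\ell,k} = k^{1-\beta}$). In each regime, an Abel summation against the tail estimate $\sum_{\ell' \geq \ell} p_{n,\ell'} \leq C_\beta \ell^{-\beta}$ gives a bound polynomially decaying in $q = m - n$ that is summable in $n$ and uniform in $m$.

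The main technical obstacle is that $w_{\ell,k}$ couples the summation variables $\ell$ and $k$, so the estimate does not reduce to a clean convolution bound and instead forces a discrete Riesz-type computation in each of the two regimes. The boundary case $\beta = 2$ requires special care: the extra factor $W_\ell^{\beta-1} \sim \log \ell$ has to be absorbed against the tail $\ell^{-\beta} = \ell^{-2}$ of $p_{n,\cdot}$, which works only because $\sum_\ell \ell^{-2} \log \ell < \infty$.
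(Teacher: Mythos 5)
Your proposal is correct and takes a genuinely different route from the paper. The paper proves this lemma by viewing the inner sum as an operator $\Phi\colon\{a_n\}\mapsto\{\ell^{-(2-\beta)}\sum_k a_{n+\ell+k}w_{\ell,k}\}$ on $\bN\times\bN$ equipped with the measure $p_{n,\ell}\ell^{\beta(2-\beta)}$, then applies Marcinkiewicz interpolation between $L^\infty\to L^\infty$ (which is the bound $W_\ell\leq C\ell^{2-\beta}$) and $L^1\to L^1$. You instead use H\"older's inequality with the measure $w_{\ell,k}\,dk$ to move the power $\beta$ inside the $k$-sum at the cost of the factor $W_\ell^{\beta-1}$, and then verify a single uniform bound $\sup_m B_m\leq C$. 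The two approaches end up performing essentially the same core computation --- the paper's $L^1$-endpoint reindexed by $j=n+\ell+k$ is literally your $\sup_m B_m\leq C$ with $a_j$ in place of $a_j^\beta$, and both need the same discrete Riesz-type estimate (the paper's~\eqref{eq:njjj}, your split at $\ell \lessgtr q/2$) together with the observation that $\ell\mapsto W_\ell^{\beta-1}w_{\ell,q-\ell}$ is nondecreasing on $[1,q-1]$ to run the Abel summation against the tail of $p_{n,\cdot}$. Your route is arguably cleaner: it avoids interpolation, and it handles $\beta\in(1,2)$, $\beta=2$, and $\beta>2$ in one pass via the three regimes of $W_\ell$, whereas the paper restricts its written argument to $\beta\in(1,2)$ and remarks that $\beta\geq 2$ is "similar and simpler" (which at $\beta=2$ would actually require adjusting the operator $\Phi$ by a logarithmic factor, since $W_\ell\sim\log\ell$ destroys $L^\infty$-boundedness of the unnormalized operator). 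One small imprecision at the very end: the series to quote for $\beta=2$ is $\sum_\ell\ell^{-3}\log\ell<\infty$ (since after Abel summation $p_{n,\ell}$ effectively becomes $\ell^{-\beta-1}=\ell^{-3}$), not $\sum_\ell\ell^{-2}\log\ell$; both are finite so nothing breaks, but the exponent you name is not the one that actually appears.
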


\begin{proof}
    In this proof $C$ denotes various constants which only depend on $\beta$ and $C_\beta$.
    We continue to abbreviate $\delta_{\ell, k} = \min\{ \ell k^{-\beta}, k^{-\beta + 1} \}$.

    We suppose that $\beta \in (1,2)$. The proof for $\beta \geq 2$ is similar and simpler.

    First we note a couple of simple bounds:
    \begin{equation}
        \label{eq:njj}
        \sum_{k \geq 1} \delta_{\ell, k}
        \leq C \ell^{2 - \beta}
    \end{equation}
    and for $\gamma > 2$,
    \begin{equation}
        \label{eq:njjj}
        \begin{aligned}
            \sum_{1 \leq \ell < m} \ell^{-\gamma} \delta_{\ell, m - \ell}
            & \leq C_{\beta,\gamma} \sum_{1 \leq \ell < m/2} \ell^{-\gamma} \delta_{\ell, m/2}
            + C_{\beta,\gamma} \sum_{m/2 \leq \ell < m} m^{-\gamma} \delta_{m, m - \ell}
            \\
            & \leq C_{\beta,\gamma} m^{-\beta} + C_{\beta,\gamma} m^{-\gamma + 1}
            ,
        \end{aligned}
    \end{equation}
    where $C_{\beta,\gamma}$ depends only on $\beta$ and $\gamma$.
    
    Let $\Phi \colon \bR^\bN \to \bR^{\bN \times \bN}$ be the linear operator
    \[
        \{a_n\}_{n \geq 1}
        \mapsto \Bigl\{
            \ell^{-(2-\beta)} \sum_{k \geq 1} a_{n + \ell + k} \delta_{\ell, k} 
        \Bigr\}_{n, \ell \geq 1}
        .
    \]
    We equip $\bN$ with the counting measure and $\bN \times \bN$ with the measure
    $\{(n, \ell)\} \mapsto p_{n, \ell} \ell^{\beta(2 - \beta)}$.
    In this formulation, it is enough to prove that $\Phi$ is bounded as an operator
    from $L^\beta$ to $L^\beta$ with the norm only depending on $C$ and $\beta$.
    To achieve this, we show that $\Phi$ is bounded from $L^1$ to $L^1$
    and from $L^\infty$ to $L^\infty$.
    Then the result follows from the Marcinkiewicz interpolation theorem.

    Boundedness from $L^\infty$ to $L^\infty$ is immediate due to~\eqref{eq:njj}, so it remains
    to prove boundedness from $L^1$ to $L^1$, i.e.\ to show that for nonnegative $a_n$,
    \begin{equation}
        \label{eq:ljj}
        \sum_{n, \ell, k \geq 1} p_{n, \ell} \ell^{(\beta - 1)(2-\beta)}
        a_{n + \ell + k} \delta_{\ell, k}
        \leq C \sum_{n \geq 3} a_n
        .
    \end{equation}
    Letting $n+\ell+k = j$ and $k+\ell = m$, we rewrite the left hand side above
    in terms of $j$, $m$ and $\ell$:
    \begin{equation}
        \label{eq:lljj}
        \cdots =
        \sum_{j \geq 3} a_j \sum_{2 \leq m < j} \sum_{1 \leq \ell < m} p_{j - m, \ell}
        \ell^{(\beta - 1)(2-\beta)}
        \delta_{\ell, m - \ell}
        .
    \end{equation}
    Since $\ell^{(\beta - 1)(2-\beta)} \delta_{\ell, m - \ell}$ is increasing with $\ell$
    and $\sum_{k \geq \ell} p_{n,k} \leq C_\beta \ell^{-\beta}$, 
    for an upper bound we replace $p_{j-m, \ell}$ with $C \ell^{-\beta - 1}$ for $\ell < m-1$,
    and $p_{j-m, m-1}$ with $C \ell^{-\beta}$:
    \begin{align*}
        \sum_{1 \leq \ell < m}
        & p_{j - m, \ell} \ell^{(\beta - 1)(2-\beta)}
        \delta_{\ell, m - \ell}
        \\
        & \leq C 
        \sum_{1 \leq \ell < m - 1} \ell^{-\beta - 1} \ell^{(\beta - 1)(2-\beta)}
        \delta_{\ell, m - \ell}
        + C (m-1)^{-\beta} (m-1)^{(\beta - 1)(2-\beta)} \delta_{m-1, 1}
        \\
        & \leq C m^{(\beta - 1)(\beta - 2) - \beta}
        .
    \end{align*}
    For the last inequality we used~\eqref{eq:njjj} with
    $\gamma = \beta + 1 - (\beta - 1)(2 - \beta) > 2$.
    Since $(\beta -1)(\beta - 2) - \beta < -1$,
    \[
        \sum_{2 \leq m < j} \sum_{1 \leq \ell < m} p_{j - m, \ell}
        \ell^{(\beta - 1)(2-\beta)}
        \delta_{\ell, m - \ell}
        \leq C \sum_{2 \leq m < j} m^{(\beta - 1)(\beta - 2) - \beta}
        \leq C
        .
    \]
    The $L^1$ to $L^1$ bound~\eqref{eq:ljj} follows from the above and~\eqref{eq:lljj}.

\end{proof}


\section*{Acknowledgements}

A.K.\ is supported by an Engineering and Physical Sciences Research Council grant
EP/P034489/1.
J.L. received funding from the European Research Council (ERC) under the
European Union’s Horizon 2020 research and innovation programme (grant agreement No 787304).
The authors thank Viviane Baladi, Mark Holland and universities of Exeter and Sorbonne
for support and hospitality during their visits.


\end{document}